\documentclass{amsart}
\usepackage{amsfonts, amsbsy, amsmath, amssymb}

\usepackage[figuresright]{rotating}

\usepackage{multirow}

% **********************  START OF PREPICTEX.TEX ***********************
% This is prepictex.tex  Version 1.1  9/10/87. See section 10 of the manual.

% To use the PiCTeX macros under LaTeX, you first need to \input this
% file, then the main corpus of PiCTeX macros (pictex.tex), and then
% the file postpictex.tex. Do not \input the file latexpicobjs.tex.

\catcode`@=11 \catcode`!=11

% First of all, see if  \fiverm  is defined. If so do nothing;
% if not, let  \fiverm  take on meaning of LaTeX's  \fivrm.
\expandafter\ifx\csname fiverm\endcsname\relax
  \let\fiverm\fivrm
\fi
  
% Save meanings of LaTeX keywords that duplicate PiCTeX keywords
\let\!latexendpicture=\endpicture 
\let\!latexframe=\frame
\let\!latexlinethickness=\linethickness
\let\!latexmultiput=\multiput
\let\!latexput=\put
 
% Redefine the LaTeX \@picture macro
\def\@picture(#1,#2)(#3,#4){%
  \@picht #2\unitlength
  \setbox\@picbox\hbox to #1\unitlength\bgroup 
  \let\endpicture=\!latexendpicture
  \let\frame=\!latexframe
  \let\linethickness=\!latexlinethickness
  \let\multiput=\!latexmultiput
  \let\put=\!latexput
  \hskip -#3\unitlength \lower #4\unitlength \hbox\bgroup}

\catcode`@=12 \catcode`!=12
\font\fiverm=cmr5

\catcode`!=11 %  ***** THIS MUST NEVER BE OMITTED
% *******************************
% *** HACKS  (Utility macros) ***
% *******************************
 
% ** User commands
% **   \PiC{P\kern-.12em\lower.5ex\hbox{I}\kern-.075emC}
% **   \PiCTeX{\PiC\kern-.11em\TeX}
% **   \placevalueinpts of <DIMENSION REGISTER> in {CONTROL SEQUENCE}
  
% ** Internal commands
% **   \!ifnextchar{CHARACTER}{TRUE ACTION}{FALSE ACTION}
% **   \!tfor NAME := LIST \do {BODY}
% **   \!etfor NAME:= LIST \do {BODY}
% **   \!cfor NAME := LIST \do {BODY}
% **   \!ecfor NAME:= LIST \do {BODY}
% **   \!ifempty{MACRO}{TRUE ACTION}{FALSE ACTION}
% **   \!getnext\\ITEMfrom\LIST
% **   \!getnextvalueof\DIMEN\from\LIST
% **   \!copylist\LISTMACRO_A\to\LISTMACRO_B
% **   \!wlet\CONTROL_SEQUENCE_A=\CONTROL_SEQUENCE_B
% **   \!listaddon ITEM LIST
% **   \!rightappendITEM\withCS\to\LISTMACRO
% **   \!leftappendITEM\withCS\to\LISTMACRO
% **   \!lop\LISTMACRO\to\ITEM
% **   \!loop ... repeat
% **   \!!loop ... repeat
% **   \!removept{DIMENSION REGISTER}{CONTROL SEQUENCE}
% **   \!mlap{...}  
% **   \!vmlap{...}
% **   \!not{TEK if-CONDITION}

% ** First, here are the the PiCTeX logo, and the syllable PiC:
\def\PiC{P\kern-.12em\lower.5ex\hbox{I}\kern-.075emC}
\def\PiCTeX{\PiC\kern-.11em\TeX}

% ** The following macro expands to parameter #2 or parameter #3 according to
% ** whether the next non-blank character following the macro is or is not #1. 
% ** Blanks following the macro are gobbled.
\def\!ifnextchar#1#2#3{%
  \let\!testchar=#1%
  \def\!first{#2}%
  \def\!second{#3}%
  \futurelet\!nextchar\!testnext}
\def\!testnext{%
  \ifx \!nextchar \!spacetoken 
    \let\!next=\!skipspacetestagain
  \else
    \ifx \!nextchar \!testchar
      \let\!next=\!first
    \else 
      \let\!next=\!second 
    \fi 
  \fi
  \!next}
\def\\{\!skipspacetestagain} 
  \expandafter\def\\ {\futurelet\!nextchar\!testnext} 
\def\\{\let\!spacetoken= } \\  %  ** set \spacetoken to a space token

% ** Borrow the "tfor" macro from Latex:
% **   \!tfor NAME := LIST \do {BODY}
% **   if, before expansion, LIST = T1 ... Tn,  where each  Ti  is a token
% **   or  {...},  then executes  BODY  n  times, with  NAME = Ti  on the
% **   i-th iteration.  Works for  n=0.
\def\!tfor#1:=#2\do#3{%
  \edef\!fortemp{#2}%
  \ifx\!fortemp\!empty 
    \else
    \!tforloop#2\!nil\!nil\!!#1{#3}%
  \fi}
\def\!tforloop#1#2\!!#3#4{%
  \def#3{#1}%
  \ifx #3\!nnil
    \let\!nextwhile=\!fornoop
  \else
    #4\relax
    \let\!nextwhile=\!tforloop
  \fi 
  \!nextwhile#2\!!#3{#4}}

% **   \!etfor NAME:= LIST\do {BODY}
% **   This is like \!cfor, but LIST is any balanced token list whose complete
% **     expansion has the form  T1 ... Tn
\def\!etfor#1:=#2\do#3{%
  \def\!!tfor{\!tfor#1:=}%
  \edef\!!!tfor{#2}%
  \expandafter\!!tfor\!!!tfor\do{#3}}

% **   modify the Latex \tfor (token-for) loop to a \cfor (comma-for) loop.
% **   \!cfor NAME := LIST \do {BODY}
% **     if, before expansion, LIST = a1,a2,...an, then executes  BODY n times,
% **     with  NAME = ai  on the i-th iteration.  Works for  n=0.
\def\!cfor#1:=#2\do#3{%
  \edef\!fortemp{#2}%
  \ifx\!fortemp\!empty 
  \else
    \!cforloop#2,\!nil,\!nil\!!#1{#3}%
  \fi}
\def\!cforloop#1,#2\!!#3#4{%
  \def#3{#1}%
  \ifx #3\!nnil
    \let\!nextwhile=\!fornoop 
  \else
    #4\relax
    \let\!nextwhile=\!cforloop
  \fi
  \!nextwhile#2\!!#3{#4}}

% **   \!ecfor NAME:= LIST\do {BODY}
% **   This is like \!cfor, but LIST is any balanced token list whose complete
% **     expansion has the form  a1,a2,...,an.
\def\!ecfor#1:=#2\do#3{%
  \def\!!cfor{\!cfor#1:=}%
  \edef\!!!cfor{#2}%
  \expandafter\!!cfor\!!!cfor\do{#3}}

\def\!empty{}
\def\!nnil{\!nil}
\def\!fornoop#1\!!#2#3{}

% **  \!ifempty{ARG}{TRUE ACTION}{FALSE ACTION}
\def\!ifempty#1#2#3{%
  \edef\!emptyarg{#1}%
  \ifx\!emptyarg\!empty
    #2%
  \else
    #3%
  \fi}
 
% **  \!getnext\\ITEMfrom\LIST
% **    \LIST has the form \\{item1}\\{item2}\\{item3}...\\{itemk}
% **    This routine sets \ITEM to item1, and cycles \LIST to
% **    \\{item2}\\{item3}...\\{itemk}\\{item1}
\def\!getnext#1\from#2{%
  \expandafter\!gnext#2\!#1#2}%
\def\!gnext\\#1#2\!#3#4{%
  \def#3{#1}%
  \def#4{#2\\{#1}}%
  \ignorespaces}

% ** \!getnextvalueof\DIMEN\from\LIST
% **   Similar to !getnext.  
% **   \LIST has the form \\{dimen1}\\{dimen2}\\{dimen3} ... 
% **   \DIMEN is a dimension register
% **   Works also for counts
%
\def\!getnextvalueof#1\from#2{%
  \expandafter\!gnextv#2\!#1#2}%
\def\!gnextv\\#1#2\!#3#4{%
  #3=#1%
  \def#4{#2\\{#1}}%
  \ignorespaces}

% ** \!copylist\LISTMACROA\to\LISTMACROB
% **   makes the replacement text of LISTMACRO B identical to that of
% **   list macro A.
\def\!copylist#1\to#2{%
  \expandafter\!!copylist#1\!#2}
\def\!!copylist#1\!#2{%
  \def#2{#1}\ignorespaces}

% **  \!wlet\CSA=\CSB
% **  lets control sequence \CSB = control sequence \CSA, and writes a
% **    message to that effect in the log file using plain TEK's \wlog
\def\!wlet#1=#2{%
  \let#1=#2 
  \wlog{\string#1=\string#2}}
 
% ** \!listaddon ITEM LIST
% ** LIST <-- LIST \\ ITEM
\def\!listaddon#1#2{%
  \expandafter\!!listaddon#2\!{#1}#2}
\def\!!listaddon#1\!#2#3{%
  \def#3{#1\\#2}}
 
% **  \!rightappendITEM\to\LISTMACRO
% **    \LISTMACRO --> \LISTMACRO\\{ITEM}
%\def\!rightappend#1\to#2{\expandafter\!!rightappend#2\!{#1}#2}
%\def\!!rightappend#1\!#2#3{\def#3{#1\\{#2}}}

% **  \!rightappendITEM\withCS\to\LISTMACRO
% **    \LISTMACRO --> \LISTMACRO||CS||{ITEM}
\def\!rightappend#1\withCS#2\to#3{\expandafter\!!rightappend#3\!#2{#1}#3}
\def\!!rightappend#1\!#2#3#4{\def#4{#1#2{#3}}}

% **  \!leftappendITEM\withCS\to\LISTMACRO
% **    \LISTMACRO --> CS||{ITEM}||\LISTMACRO
\def\!leftappend#1\withCS#2\to#3{\expandafter\!!leftappend#3\!#2{#1}#3}
\def\!!leftappend#1\!#2#3#4{\def#4{#2{#3}#1}}

% **  \!lop\LISTMACRO\to\ITEM
% **    \\{item1}\\{item2}\\{item3} ... --> \\{item2}\\{item3} ...
% **    item1 --> \ITEM
\def\!lop#1\to#2{\expandafter\!!lop#1\!#1#2}
\def\!!lop\\#1#2\!#3#4{\def#4{#1}\def#3{#2}}

% **  \!placeNUMBER\of\LISTMACRO\in\ITEM
% **    the NUMBERth item of \LISTMACRO --> replacement text of \ITEM
%\def\!place#1\of#2\in#3{\def#3{\outofrange}%
%{\count0=#1\def\\##1{\advance\count0-1 \ifnum\count0=0 \gdef#3{##1}\fi}#2}}

% **  Following code converts a commalist to a list macro, with all items 
% **    fully expanded.
%\!ecfor\item:=\commalist\do{\expandafter\!rightappend\item\to\list}

% ** \!loop ... repeat
% ** This is exactly like TEX's \loop ... repeat.  It can be used in nesting
% ** two loops, without puting the inner one inside a group.
\def\!loop#1\repeat{\def\!body{#1}\!iterate}
\def\!iterate{\!body\let\!next=\!iterate\else\let\!next=\relax\fi\!next}
 
% ** \!!loop ... repeat
% ** This is exactly like TEX's \loop ... repeat.  It can be used in nesting
% ** two loops, without puting the inner one inside a group.
\def\!!loop#1\repeat{\def\!!body{#1}\!!iterate}
\def\!!iterate{\!!body\let\!!next=\!!iterate\else\let\!!next=\relax\fi\!!next}
%  (\multiput uses \!!loop)
 
% ** \!removept{DIMENREG}{\CS}
% ** Defines the control sequence CS to be the value (in points) in the
% ** dimension register DIMENREG (but without the "pt" TEK usually adds)
% ** E.g., after  \dimen0=12.3pt \!removept\dimen0\A, \A expands to 12.3
\def\!removept#1#2{\edef#2{\expandafter\!!removePT\the#1}}
{\catcode`p=12 \catcode`t=12 \gdef\!!removePT#1pt{#1}}

% ** \pladevalueinpts of <DIMENSION REGISTER> in {CONTROL SEQUENCE}
\def\placevalueinpts of <#1> in #2 {%
  \!removept{#1}{#2}}
 
% ** \!mlap{...}  \!vmlap{...}
% ** Center  ...  in a box of width 0.
\def\!mlap#1{\hbox to 0pt{\hss#1\hss}}
\def\!vmlap#1{\vbox to 0pt{\vss#1\vss}}
 
% ** \!not{TEK if-CONDITION}
% ** By a TEK if-CONDITION is meant something like 
% **     \ifnum\N<0,   or   \ifdim\A>\B
% ** \!not produces an if-condition which is false if the original condition
% ** is true, and true if the original condition is false.
\def\!not#1{%
  #1\relax
    \!switchfalse
  \else
    \!switchtrue
  \fi
  \if!switch
  \ignorespaces}

% *******************
% *** ALLOCATIONS ***
% *******************

% This section allocates all the registers PiCTeX uses. Following
% each allocation is a string of the form  ....N.D...L......... ;
% the various letters show which sections of PiCTeX make explicit
% reference to that register, according to the following code:
 
%   H Hacks
%   A Areas
%   W arroWs
%   B Bars
%   X boXes
%   C Curves
%   D Dashpattterns
%   V diVision
%   E Ellipses
%   U rUles
%   L Linear arc
%   G loGten
%   P Pictures
%   O plOtting
%   Y pYthagoras
%   Q Quadratic arc
%   R Rotations
%   S Shading
%   T Ticks

% Turn off messages from TeX's allocation macros
\let\!!!wlog=\wlog              % "\wlog" is defined in plain TeX
\def\wlog#1{}    

\newdimen\headingtoplotskip     %.A.................
\newdimen\linethickness         %.A..X....U........T
\newdimen\longticklength        %.A................T
\newdimen\plotsymbolspacing     %......D...L....Q...
\newdimen\shortticklength       %.A................T
\newdimen\stackleading          %.A..........P......
\newdimen\tickstovaluesleading  %.A................T
\newdimen\totalarclength        %......D...L....Q...
\newdimen\valuestolabelleading  %.A.................

\newbox\!boxA                   %.AW...............T
\newbox\!boxB                   %..W................
\newbox\!picbox                 %............P......
\newbox\!plotsymbol             %..........L..O.....
\newbox\!putobject              %............PO...S.
\newbox\!shadesymbol            %.................S.

\newcount\!countA               %.A....D..UL....Q.ST
\newcount\!countB               %......D..U.....Q.ST
\newcount\!countC               %...............Q..T
\newcount\!countD               %...................
\newcount\!countE               %.............O....T
\newcount\!countF               %.............O....T
\newcount\!countG               %..................T
\newcount\!fiftypt              %.........U.........
\newcount\!intervalno           %..........L....Q...
\newcount\!npoints              %..........L........
\newcount\!nsegments            %.........U.........
\newcount\!ntemp                %............P......
\newcount\!parity               %.................S.
\newcount\!scalefactor          %..................T
\newcount\!tfs                  %.......V...........
\newcount\!tickcase             %..................T

\newdimen\!Xleft                %............P......
\newdimen\!Xright               %............P......
\newdimen\!Xsave                %.A................T
\newdimen\!Ybot                 %............P......
\newdimen\!Ysave                %.A................T
\newdimen\!Ytop                 %............P......
\newdimen\!angle                %........E..........
\newdimen\!arclength            %..W......UL....Q...
\newdimen\!areabloc             %.A........L........
\newdimen\!arealloc             %.A........L........
\newdimen\!arearloc             %.A........L........
\newdimen\!areatloc             %.A........L........
\newdimen\!bshrinkage           %.................S.
\newdimen\!checkbot             %..........L........
\newdimen\!checkleft            %..........L........
\newdimen\!checkright           %..........L........
\newdimen\!checktop             %..........L........
\newdimen\!dimenA               %.AW.X.DVEUL..OYQRST
\newdimen\!dimenB               %....X.DVEU...O.QRS.
\newdimen\!dimenC               %..W.X.DVEU......RS.
\newdimen\!dimenD               %..W.X.DVEU....Y.RS.
\newdimen\!dimenE               %..W........G..YQ.S.
\newdimen\!dimenF               %...........G..YQ.S.
\newdimen\!dimenG               %...........G..YQ.S.
\newdimen\!dimenH               %...........G..Y..S.
\newdimen\!dimenI               %...BX.........Y....
\newdimen\!distacross           %..........L....Q...
\newdimen\!downlength           %..........L........
\newdimen\!dp                   %.A..X.......P....S.
\newdimen\!dshade               %.................S.
\newdimen\!dxpos                %..W......U..P....S.
\newdimen\!dxprime              %...............Q...
\newdimen\!dypos                %..WB.....U..P......
\newdimen\!dyprime              %...............Q...
\newdimen\!ht                   %.A..X.......P....S.
\newdimen\!leaderlength         %......D..U.........
\newdimen\!lshrinkage           %.................S.
\newdimen\!midarclength         %...............Q...
\newdimen\!offset               %.A................T
\newdimen\!plotheadingoffset    %.A.................
\newdimen\!plotsymbolxshift     %..........L..O.....
\newdimen\!plotsymbolyshift     %..........L..O.....
\newdimen\!plotxorigin          %..........L..O.....
\newdimen\!plotyorigin          %..........L..O.....
\newdimen\!rootten              %...........G.......
\newdimen\!rshrinkage           %.................S.
\newdimen\!shadesymbolxshift    %.................S.
\newdimen\!shadesymbolyshift    %.................S.
\newdimen\!tenAa                %...........G.......
\newdimen\!tenAc                %...........G.......
\newdimen\!tenAe                %...........G.......
\newdimen\!tshrinkage           %.................S.
\newdimen\!uplength             %..........L........
\newdimen\!wd                   %....X.......P....S.
\newdimen\!wmax                 %...............Q...
\newdimen\!wmin                 %...............Q...
\newdimen\!xB                   %...............Q...
\newdimen\!xC                   %...............Q...
\newdimen\!xE                   %..W.....E.L....Q.S.
\newdimen\!xM                   %..W.....E......Q.S.
\newdimen\!xS                   %..W.....E.L....Q.S.
\newdimen\!xaxislength          %.A................T
\newdimen\!xdiff                %..........L........
\newdimen\!xleft                %............P......
\newdimen\!xloc                 %..WB.....U.......S.
\newdimen\!xorigin              %.A........L.P....S.
\newdimen\!xpivot               %................R..
\newdimen\!xpos                 %..........L.P..Q.ST
\newdimen\!xprime               %...............Q...
\newdimen\!xright               %............P......
\newdimen\!xshade               %.................S.
\newdimen\!xshift               %..W.........PO...S.
\newdimen\!xtemp                %............P......
\newdimen\!xunit                %.AWBX...EUL.P..QRS.
\newdimen\!xxE                  %........E..........
\newdimen\!xxM                  %........E..........
\newdimen\!xxS                  %........E..........
\newdimen\!xxloc                %..WB....EU.........
\newdimen\!yB                   %...............Q...
\newdimen\!yC                   %...............Q...
\newdimen\!yE                   %..W.....E.L....Q...
\newdimen\!yM                   %..W.....E......Q...
\newdimen\!yS                   %..W.....E.L....Q...
\newdimen\!yaxislength          %.A................T
\newdimen\!ybot                 %............P......
\newdimen\!ydiff                %..........L........
\newdimen\!yloc                 %..WB.....U.......S.
\newdimen\!yorigin              %.A........L.P....S.
\newdimen\!ypivot               %................R..
\newdimen\!ypos                 %..........L.P..Q.ST
\newdimen\!yprime               %...............Q...
\newdimen\!yshade               %.................S.
\newdimen\!yshift               %..W.........PO...S.
\newdimen\!ytemp                %............P......
\newdimen\!ytop                 %............P......
\newdimen\!yunit                %.AWBX...EUL.P..QRS.
\newdimen\!yyE                  %........E..........
\newdimen\!yyM                  %........E..........
\newdimen\!yyS                  %........E..........
\newdimen\!yyloc                %..WB....EU.........
\newdimen\!zpt                  %.AWBX.DVEULGP.YQ.ST

\newif\if!axisvisible           %.A.................
\newif\if!gridlinestoo          %..................T
\newif\if!keepPO                %...................
\newif\if!placeaxislabel        %.A.................
\newif\if!switch                %H..................
\newif\if!xswitch               %.A................T

\newtoks\!axisLaBeL             %.A.................
\newtoks\!keywordtoks           %.A.................

\newwrite\!replotfile           %.............O.....

\newhelp\!keywordhelp{The keyword mentioned in the error message in unknown. 
Replace NEW KEYWORD in the indicated response by the keyword that 
should have been specified.}    %.A.................

% The following commands assign alternate names to some of the 
% above registers.  "\!wlet"  is defined in  Hacks.
\!wlet\!!origin=\!xM                   %.A................T
\!wlet\!!unit=\!uplength               %.A................T
\!wlet\!Lresiduallength=\!dimenG       %.........U.........
\!wlet\!Rresiduallength=\!dimenF       %.........U.........
\!wlet\!axisLength=\!distacross        %.A................T
\!wlet\!axisend=\!ydiff                %.A................T
\!wlet\!axisstart=\!xdiff              %.A................T
\!wlet\!axisxlevel=\!arclength         %.A................T
\!wlet\!axisylevel=\!downlength        %.A................T
\!wlet\!beta=\!dimenE                  %...............Q...
\!wlet\!gamma=\!dimenF                 %...............Q...
\!wlet\!shadexorigin=\!plotxorigin     %.................S.
\!wlet\!shadeyorigin=\!plotyorigin     %.................S.
\!wlet\!ticklength=\!xS                %..................T
\!wlet\!ticklocation=\!xE              %..................T
\!wlet\!ticklocationincr=\!yE          %..................T
\!wlet\!tickwidth=\!yS                 %..................T
\!wlet\!totalleaderlength=\!dimenE     %.........U.........
\!wlet\!xone=\!xprime                  %....X..............
\!wlet\!xtwo=\!dxprime                 %....X..............
\!wlet\!ySsave=\!yM                    %...................
\!wlet\!ybB=\!yB                       %.................S.
\!wlet\!ybC=\!yC                       %.................S.
\!wlet\!ybE=\!yE                       %.................S.
\!wlet\!ybM=\!yM                       %.................S.
\!wlet\!ybS=\!yS                       %.................S.
\!wlet\!ybpos=\!yyloc                  %.................S.
\!wlet\!yone=\!yprime                  %....X..............
\!wlet\!ytB=\!xB                       %.................S.
\!wlet\!ytC=\!xC                       %.................S.
\!wlet\!ytE=\!downlength               %.................S.
\!wlet\!ytM=\!arclength                %.................S.
\!wlet\!ytS=\!distacross               %.................S.
\!wlet\!ytpos=\!xxloc                  %.................S.
\!wlet\!ytwo=\!dyprime                 %....X..............

% Initial values for registers
\!zpt=0pt                              % static
\!xunit=1pt
\!yunit=1pt
\!arearloc=\!xunit
\!areatloc=\!yunit
\!dshade=5pt
\!leaderlength=24in
\!tfs=256                              % static
\!wmax=5.3pt                           % static
\!wmin=2.7pt                           % static
\!xaxislength=\!xunit
\!xpivot=\!zpt
\!yaxislength=\!yunit 
\!ypivot=\!zpt
\plotsymbolspacing=.4pt
  \!dimenA=50pt \!fiftypt=\!dimenA     % static

\!rootten=3.162278pt                   % static
\!tenAa=8.690286pt                     % static  (A5)
\!tenAc=2.773839pt                     % static  (A3)
\!tenAe=2.543275pt                     % static  (A1)

% Initial values for control sequences
\def\!cosrotationangle{1}      %................R..
\def\!sinrotationangle{0}      %................R..
\def\!xpivotcoord{0}           %................R..
\def\!xref{0}                  %............P......
\def\!xshadesave{0}            %.................S.
\def\!ypivotcoord{0}           %................R..
\def\!yref{0}                  %............P......
\def\!yshadesave{0}            %.................S.
\def\!zero{0}                  %..................T

% Reset TeX to report allocations
\let\wlog=\!!!wlog
%  *************************************
%  ***  AREAS: Deals with plot areas ***
%  *************************************
%
%  ** User commands
%  **   \setplotarea x from LEFT XCOORD to RIGTH XCOORD, y from BOTTOM YCOORD
%  **      to TOP YCOORD
%  **   \axis BOTTOM-LEFT-TOP-RIGHT  [SHIFTEDTO xy=COORD] [VISIBLE-INVISIBLE]
%  **      [LABEL {label}] [TICKS] /
%  **   \visibleaxes
%  **   \invisibleaxes
%  **   \plotheading {HEADING}
%  **   \grid {# of columns} {# of rows}
%  **   \normalgraphs 
  
%  **  \normalgraphs
%  **    Sets defaults for graph setup. See Subsection 3.4 of manual.
\def\normalgraphs{%
  \longticklength=.4\baselineskip
  \shortticklength=.25\baselineskip
  \tickstovaluesleading=.25\baselineskip
  \valuestolabelleading=.8\baselineskip
  \linethickness=.4pt
  \stackleading=.17\baselineskip
  \headingtoplotskip=1.5\baselineskip
  \visibleaxes
  \ticksout
  \nogridlines
  \unloggedticks}
%
% **  \setplotarea x from LEFT XCOORD to RIGTH XCOORD, y from BOTTOM YCOORD
% **    to TOP YCOORD
% **  Reserves space in PICBOX for a rectangular box with the indicated
% **   coordinates.  Must be specified before calls to  \axis, 
% **   \grid, \plotheading.
% **  See Subsection 3.1 of the manual.
\def\setplotarea x from #1 to #2, y from #3 to #4 {%
  \!arealloc=\!M{#1}\!xunit \advance \!arealloc -\!xorigin
  \!areabloc=\!M{#3}\!yunit \advance \!areabloc -\!yorigin
  \!arearloc=\!M{#2}\!xunit \advance \!arearloc -\!xorigin
  \!areatloc=\!M{#4}\!yunit \advance \!areatloc -\!yorigin
  \!initinboundscheck
  \!xaxislength=\!arearloc  \advance\!xaxislength -\!arealloc
  \!yaxislength=\!areatloc  \advance\!yaxislength -\!areabloc
  \!plotheadingoffset=\!zpt
  \!dimenput {{\setbox0=\hbox{}\wd0=\!xaxislength\ht0=\!yaxislength\box0}}
     [bl] (\!arealloc,\!areabloc)}
%
% ** \visibleaxes, \invisibleaxes 
% ** Switches for setting visibility of subsequent axes.
% ** See Subsection 3.2 of the manual.
\def\visibleaxes{%
  \def\!axisvisibility{\!axisvisibletrue}}

%
% ** The next few macros enable the user to fix up an erroneous keyword
% **   in the \axis command.
%  \newhelp is in ALLOCATIONS
%  \newhelp\!keywordhelp{The keyword mentioned in the error message in unknown. 
%  Replace NEW KEYWORD in the indicated response by the keyword that 
%  should have been specified.}

\def\!fixkeyword#1{%
  \errhelp=\!keywordhelp
  \errmessage{Unrecognized keyword `#1': \the\!keywordtoks{NEW KEYWORD}'}}

%  \newtoks\!keywordtoks    In ALLOCATIONS.
\!keywordtoks={enter `i\fixkeyword}

\def\fixkeyword#1{%
  \!nextkeyword#1 }

% ** \axis BOTTOM-LEFT-TOP-RIGHT  [SHIFTEDTO xy=COORD] [VISIBLE-INVISIBLE]
% **   [LABEL {label}] [TICKS] /
% ** Exactly one of the keywords BOTTOM, LEFT, TOP, RIGHT must be
% ** specified. Axis is drawn along the indicated edge of the current
% ** plot area, shifted if the SHIFTEDTO option is used, visible or
% ** invisible according the selected option, with an optional LABEL,
% ** and optional TICKS (see ticks.tex for the options avialabel with
% ** TICKS). The TICKS option must be the last one specified. The \axis
% ** MUST be terminated with a / followed by a space.
% ** See Subsection 3.2 of the manual for more information.

% ** The various options of the \axis command are processed by the
% ** \!nextkeyword macro defined below. For example, 
% ** `\!nextkeyword shiftedto ' expands to `\!axisshiftedto'.
\def\axis {%
  \def\!nextkeyword##1 {%
    \expandafter\ifx\csname !axis##1\endcsname \relax
      \def\!next{\!fixkeyword{##1}}%
    \else
      \def\!next{\csname !axis##1\endcsname}%
    \fi
    \!next}%
  \!offset=\!zpt
  \!axisvisibility
  \!placeaxislabelfalse
  \!nextkeyword}

% ** This and the various macros that follow handle the keyword
% ** specifications on the \axis command
% ** See Subsection 3.2 of the manual.
\def\!axisbottom{%
  \!axisylevel=\!areabloc
  \def\!tickxsign{0}%
  \def\!tickysign{-}%
  \def\!axissetup{\!axisxsetup}%
  \def\!axislabeltbrl{t}%
  \!nextkeyword}

\def\!axistop{%
  \!axisylevel=\!areatloc
  \def\!tickxsign{0}%
  \def\!tickysign{+}%
  \def\!axissetup{\!axisxsetup}%
  \def\!axislabeltbrl{b}%
  \!nextkeyword}

\def\!axisleft{%
  \!axisxlevel=\!arealloc
  \def\!tickxsign{-}%
  \def\!tickysign{0}%
  \def\!axissetup{\!axisysetup}%
  \def\!axislabeltbrl{r}%
  \!nextkeyword}

\def\!axisright{%
  \!axisxlevel=\!arearloc
  \def\!tickxsign{+}%
  \def\!tickysign{0}%
  \def\!axissetup{\!axisysetup}%
  \def\!axislabeltbrl{l}%
  \!nextkeyword}

\def\!axisshiftedto#1=#2 {%
  \if 0\!tickxsign
    \!axisylevel=\!M{#2}\!yunit
    \advance\!axisylevel -\!yorigin
  \else
    \!axisxlevel=\!M{#2}\!xunit
    \advance\!axisxlevel -\!xorigin
  \fi
  \!nextkeyword}

\def\!axisvisible{%
  \!axisvisibletrue  
  \!nextkeyword}

\def\!axisinvisible{%
  \!axisvisiblefalse
  \!nextkeyword}

\def\!axislabel#1 {%
  \!axisLaBeL={#1}%
  \!placeaxislabeltrue
  \!nextkeyword}

\expandafter\def\csname !axis/\endcsname{%
  \!axissetup % This could done already by "ticks"; if so, now \relax
  \if!placeaxislabel
    \!placeaxislabel
  \fi
  \if +\!tickysign %                 ** (A "top" axis)
    \!dimenA=\!axisylevel
    \advance\!dimenA \!offset %      ** dimA = top of the axis structure
    \advance\!dimenA -\!areatloc %   ** dimA = excess over the plot area
    \ifdim \!dimenA>\!plotheadingoffset
      \!plotheadingoffset=\!dimenA % ** Greatest excess over the plot area
    \fi
  \fi}

% ** \grid {c} {r} 
% ** Partitions the plot area into c columns and r rows; see Subsection 3.3
% ** of the manual.
% ** (Other grid patterns can be drawn with the TICKS option of the \axis 
% ** command.
\def\grid #1 #2 {%
  \!countA=#1\advance\!countA 1
  \axis bottom invisible ticks length <\!zpt> andacross quantity {\!countA} /
  \!countA=#2\advance\!countA 1
  \axis left   invisible ticks length <\!zpt> andacross quantity {\!countA} / }

% ** \plotheading{HEADING}
% ** Places HEADING centered above the top of the plotarea (and above
% ** any top axis ticks marks, tick labels, and axis label); see
% ** Subsection 3.3 of the manual.
\def\plotheading#1 {%
  \advance\!plotheadingoffset \headingtoplotskip
  \!dimenput {#1} [B] <.5\!xaxislength,\!plotheadingoffset>
    (\!arealloc,\!areatloc)}

% ** From here on, the routines are internal.
\def\!axisxsetup{%
  \!axisxlevel=\!arealloc
  \!axisstart=\!arealloc
  \!axisend=\!arearloc
  \!axisLength=\!xaxislength
  \!!origin=\!xorigin
  \!!unit=\!xunit
  \!xswitchtrue
  \if!axisvisible 
    \!makeaxis
  \fi}

\def\!axisysetup{%
  \!axisylevel=\!areabloc
  \!axisstart=\!areabloc
  \!axisend=\!areatloc
  \!axisLength=\!yaxislength
  \!!origin=\!yorigin
  \!!unit=\!yunit
  \!xswitchfalse
  \if!axisvisible
    \!makeaxis
  \fi}

\def\!makeaxis{%
  \setbox\!boxA=\hbox{% (Make a pseudo-y[x] tick for an x[y]-axis)
    \beginpicture
      \!setdimenmode
      \setcoordinatesystem point at {\!zpt} {\!zpt}   
      \putrule from {\!zpt} {\!zpt} to
        {\!tickysign\!tickysign\!axisLength} 
        {\!tickxsign\!tickxsign\!axisLength}
    \endpicturesave <\!Xsave,\!Ysave>}%
    \wd\!boxA=\!zpt
    \!placetick\!axisstart}

\def\!placeaxislabel{%
  \advance\!offset \valuestolabelleading
  \if!xswitch
    \!dimenput {\the\!axisLaBeL} [\!axislabeltbrl]
      <.5\!axisLength,\!tickysign\!offset> (\!axisxlevel,\!axisylevel)
    \advance\!offset \!dp  % ** advance offset by the "tallness"
    \advance\!offset \!ht  % ** of the label
  \else
    \!dimenput {\the\!axisLaBeL} [\!axislabeltbrl]
      <\!tickxsign\!offset,.5\!axisLength> (\!axisxlevel,\!axisylevel)
  \fi
  \!axisLaBeL={}}

% *******************************
% *** ARROWS  (Draws arrows)  ***
% *******************************
%
% ** User commands
% **  \arrow <ARROW HEAD LENGTH> [MID FRACTION, BASE FRACTION]
% **    [<XSHIFT,YSHIFT>] from XFROM YFROM to XTO YTO
% **  \betweenarrows {TEXT} [orientation & shift] from XFROM YFROM to XTO YTO

% ** \arrow <ARROW HEAD LENGTH> [MID FRACTION, BASE FRACTION]
% **    [<XSHIFT,YSHIFT>] from XFROM YFROM to XTO YTO
% ** Draws an arrow from (XFROM,YFROM) to (XTO,YTO).  The arrow head
% ** is constructed two quadratic arcs, which extend back a distance
% ** ARROW HEAD LENGTH (a dimension) on both sides of the arrow shaft.
% ** All the way back the arcs are a distance BASE FRACTION*ARROW HEAD
% ** LENGTH apart, while half-way back they are a distance MID FRACTION*
% ** ARROW HEAD LENGTH apart. <XSHIFT,YSHIFT> is optional, and has
% ** its usual interpreation. See Subsection 5.4 of the manual.

\def\arrow <#1> [#2,#3]{%
  \!ifnextchar<{\!arrow{#1}{#2}{#3}}{\!arrow{#1}{#2}{#3}<\!zpt,\!zpt> }}

\def\!arrow#1#2#3<#4,#5> from #6 #7 to #8 #9 {%
%
% ** convert to dimensions
  \!xloc=\!M{#8}\!xunit   
  \!yloc=\!M{#9}\!yunit
  \!dxpos=\!xloc  \!dimenA=\!M{#6}\!xunit  \advance \!dxpos -\!dimenA
  \!dypos=\!yloc  \!dimenA=\!M{#7}\!yunit  \advance \!dypos -\!dimenA
  \let\!MAH=\!M%                         ** save current c/d mode
  \!setdimenmode%                        ** go into dimension mode
  \!xshift=#4\relax  \!yshift=#5\relax%  ** pick up shift
  \!reverserotateonly\!xshift\!yshift%   ** back rotate shift
  \advance\!xshift\!xloc  \advance\!yshift\!yloc
%
% **  draw shaft of arrow
  \!xS=-\!dxpos  \advance\!xS\!xshift
  \!yS=-\!dypos  \advance\!yS\!yshift
  \!start (\!xS,\!yS)
  \!ljoin (\!xshift,\!yshift)
%
% ** find 32*cosine and 32*sine of angle of rotation
  \!Pythag\!dxpos\!dypos\!arclength
  \!divide\!dxpos\!arclength\!dxpos  
  \!dxpos=32\!dxpos  \!removept\!dxpos\!!cos
  \!divide\!dypos\!arclength\!dypos  
  \!dypos=32\!dypos  \!removept\!dypos\!!sin
% 
% ** construct arrowhead
  \!halfhead{#1}{#2}{#3}%                ** draw half of arrow head
  \!halfhead{#1}{-#2}{-#3}%              ** draw other half
  \let\!M=\!MAH%                         ** restore old c/d mode
  \ignorespaces}
%
% ** draw half of arrow head
  \def\!halfhead#1#2#3{%
    \!dimenC=-#1%                
    \divide \!dimenC 2 %                 ** half way back
    \!dimenD=#2\!dimenC%                 ** half the mid width
    \!rotate(\!dimenC,\!dimenD)by(\!!cos,\!!sin)to(\!xM,\!yM)
    \!dimenC=-#1%                        ** all the way back
    \!dimenD=#3\!dimenC
    \!dimenD=.5\!dimenD%                 ** half the full width
    \!rotate(\!dimenC,\!dimenD)by(\!!cos,\!!sin)to(\!xE,\!yE)
    \!start (\!xshift,\!yshift)
    \advance\!xM\!xshift  \advance\!yM\!yshift
    \advance\!xE\!xshift  \advance\!yE\!yshift
    \!qjoin (\!xM,\!yM) (\!xE,\!yE) 
    \ignorespaces}

% ** \betweenarrows {TEXT} [orientation & shift] from XFROM YFROM to XTO YTO
% **   Makes things like <--- text --->, using arrow heads from TeX's fonts.
% **   See Subsection 5.4 of the manual.
\def\betweenarrows #1#2 from #3 #4 to #5 #6 {%
  \!xloc=\!M{#3}\!xunit  \!xxloc=\!M{#5}\!xunit%   
  \!yloc=\!M{#4}\!yunit  \!yyloc=\!M{#6}\!yunit%           
  \!dxpos=\!xxloc  \advance\!dxpos by -\!xloc
  \!dypos=\!yyloc  \advance\!dypos by -\!yloc
  \advance\!xloc .5\!dxpos
  \advance\!yloc .5\!dypos
  \let\!MBA=\!M%           ** save current coord\dimen mode
  \!setdimenmode%          ** express locations in dimens
  \ifdim\!dypos=\!zpt
    \ifdim\!dxpos<\!zpt \!dxpos=-\!dxpos \fi
    \put {\!lrarrows{\!dxpos}{#1}}#2{} at {\!xloc} {\!yloc}
  \else
    \ifdim\!dxpos=\!zpt
      \ifdim\!dypos<\!zpt \!dypos=-\!zpt \fi
      \put {\!udarrows{\!dypos}{#1}}#2{} at {\!xloc} {\!yloc}
    \fi
  \fi
  \let\!M=\!MBA%           ** restore previous c/d mode
  \ignorespaces}

% ** Subroutine for left-right between arrows 
\def\!lrarrows#1#2{% #1=width, #2=text
  {\setbox\!boxA=\hbox{$\mkern-2mu\mathord-\mkern-2mu$}%
   \setbox\!boxB=\hbox{$\leftarrow$}\!dimenE=\ht\!boxB
   \setbox\!boxB=\hbox{}\ht\!boxB=2\!dimenE
   \hbox to #1{$\mathord\leftarrow\mkern-6mu
     \cleaders\copy\!boxA\hfil
     \mkern-6mu\mathord-$%
     \kern.4em $\vcenter{\box\!boxB}$$\vcenter{\hbox{#2}}$\kern.4em
     $\mathord-\mkern-6mu
     \cleaders\copy\!boxA\hfil
     \mkern-6mu\mathord\rightarrow$}}}

% ** Subroutine for up-down between arrows 
\def\!udarrows#1#2{% #1=width, #2=text
  {\setbox\!boxB=\hbox{#2}%
   \setbox\!boxA=\hbox to \wd\!boxB{\hss$\vert$\hss}%
   \!dimenE=\ht\!boxA \advance\!dimenE \dp\!boxA \divide\!dimenE 2
   \vbox to #1{\offinterlineskip
      \vskip .05556\!dimenE
      \hbox to \wd\!boxB{\hss$\mkern.4mu\uparrow$\hss}\vskip-\!dimenE
      \cleaders\copy\!boxA\vfil
      \vskip-\!dimenE\copy\!boxA
      \vskip\!dimenE\copy\!boxB\vskip.4em
      \copy\!boxA\vskip-\!dimenE
      \cleaders\copy\!boxA\vfil
      \vskip-\!dimenE \hbox to \wd\!boxB{\hss$\mkern.4mu\downarrow$\hss}
      \vskip .05556\!dimenE}}}

% ***************************
% *** BARS  (Draws bars)  ***
% ***************************
%
% ** User commands:
% ** \putbar [<XSHIFT,YSHIFT>] breadth <BREADTH> from XSTART YSTART
% **   to XEND YEND
% ** \setbars [<XSHIFT,YSHIFT>] breadth <BREADTH> baseline at XY = COORD
% **   [baselabels ([B_ORIENTATION_x,B_ORIENTATION_y] <B_XSHIFT,B_YSHIFT>)]
% **   [endlabels  ([E_ORIENTATION_x,E_ORIENTATION_y] <E_XSHIFT,E_YSHIFT>)]

% ** \putbar [<XSHIFT,YSHIFT>] breadth <BREADTH> from XSTART YSTART
% **   to XEND YEND
% ** Either XSTART=XEND or YSTART=YEND. Draws a rectangle between
% **   (XSTART,YSTART) & (XEND,YEND). The "depth" of the rectangle
% **   is determined by those two plot positions; its other
% **   dimension "breadth" is specified by the dimension BREADTH.
% ** See Subsection 4.2 of the manual.
\def\putbar#1breadth <#2> from #3 #4 to #5 #6 {%
  \!xloc=\!M{#3}\!xunit  \!xxloc=\!M{#5}\!xunit%   
  \!yloc=\!M{#4}\!yunit  \!yyloc=\!M{#6}\!yunit%           
  \!dypos=\!yyloc  \advance\!dypos by -\!yloc
  \!dimenI=#2  
  \ifdim \!dimenI=\!zpt %            ** If 0 breadth
    \putrule#1from {#3} {#4} to {#5} {#6} % ** Then draw line
  \else %                            ** Else, put in a rectangle
    \let\!MBar=\!M%                  ** save current c/d mode
    \!setdimenmode %                 ** go into dimension mode
    \divide\!dimenI 2
    \ifdim \!dypos=\!zpt             
      \advance \!yloc -\!dimenI %    ** Equal y coordinates
      \advance \!yyloc \!dimenI
    \else
      \advance \!xloc -\!dimenI %    ** Equal x coordinates
      \advance \!xxloc \!dimenI
    \fi
    \putrectangle#1corners at {\!xloc} {\!yloc} and {\!xxloc} {\!yyloc}
    \let\!M=\!MBar %                 ** restore c/d mode
  \fi
  \ignorespaces}

% ** \setbars [<XSHIFT,YSHIFT>] breadth <BREADTH> baseline at XY = COORD
% **   [baselabels ([B_ORIENTATION_x,B_ORIENTATION_y] <B_XSHIFT,B_YSHIFT>)]
% **   [endlabels  ([E_ORIENTATION_x,E_ORIENTATION_y] <E_XSHIFT,E_YSHIFT>)]
% ** This command puts PiCTeX into the bar graph drawing mode described
% **   in Subsection 4.4 of the manual.
\def\setbars#1breadth <#2> baseline at #3 = #4 {%
  \edef\!barshift{#1}%
  \edef\!barbreadth{#2}%
  \edef\!barorientation{#3}%
  \edef\!barbaseline{#4}%
  \def\!bardobaselabel{\!bardoendlabel}%
  \def\!bardoendlabel{\!barfinish}%
  \let\!drawcurve=\!barcurve
  \!setbars}
\def\!setbars{%
  \futurelet\!nextchar\!!setbars}
\def\!!setbars{%
  \if b\!nextchar
    \def\!!!setbars{\!setbarsbget}%
  \else 
    \if e\!nextchar
      \def\!!!setbars{\!setbarseget}%
    \else
      \def\!!!setbars{\relax}%
    \fi
  \fi
  \!!!setbars}
\def\!setbarsbget baselabels (#1) {%
  \def\!barbaselabelorientation{#1}%
  \def\!bardobaselabel{\!!bardobaselabel}%
  \!setbars}
\def\!setbarseget endlabels (#1) {%
  \edef\!barendlabelorientation{#1}%
  \def\!bardoendlabel{\!!bardoendlabel}%
  \!setbars}

% ** \!barcurve
% ** Draws a bargraph with preset values of barshift, barbreadth,
% ** barorientation (x or y) and barbaseline (coordinate)
\def\!barcurve #1 #2 {%
  \if y\!barorientation
    \def\!basexarg{#1}%
    \def\!baseyarg{\!barbaseline}%
  \else
    \def\!basexarg{\!barbaseline}%
    \def\!baseyarg{#2}%
  \fi
  \expandafter\putbar\!barshift breadth <\!barbreadth> from {\!basexarg}
    {\!baseyarg} to {#1} {#2}
  \def\!endxarg{#1}%
  \def\!endyarg{#2}%
  \!bardobaselabel}

\def\!!bardobaselabel "#1" {%
  \put {#1}\!barbaselabelorientation{} at {\!basexarg} {\!baseyarg}
  \!bardoendlabel}
 
\def\!!bardoendlabel "#1" {%
  \put {#1}\!barendlabelorientation{} at {\!endxarg} {\!endyarg}
  \!barfinish}

\def\!barfinish{%
  \!ifnextchar/{\!finish}{\!barcurve}}

% ********************************
% *** BOXES (Draws rectangles) ***
% ********************************
%
% ** User commands:
% **   \putrectangle [<XSHIFT,YSHIFT>] corners at  XCOORD1 YCOORD1
% **     and  XCOORD2 YCOORD2 
% **   \shaderectangleson
% **   \shaderectanglesoff
% **   \frame [<SEPARATION>] {TEXT}
% **   \rectangle <WIDTH> <HEIGHT>
%
%
% **  \putrectangle [<XSHIFT,YSHIFT>] corners at XCOORD1 YCOORD1 
% **    and  XCOORD2 YCOORD2 
% **  Draws a rectangle with corners at (X1,Y1), (X2,Y1), (X1,Y2), (X2,Y2)
% **  Lines have thickness \linethickness, and overlap at the corners.
% **  The optional field  <XSHIFT,YSHIFT>  functions as with a \put command.
% **  See Subsection 4.2 of the manual.
\def\putrectangle{%
  \!ifnextchar<{\!putrectangle}{\!putrectangle<\!zpt,\!zpt> }}
\def\!putrectangle<#1,#2> corners at #3 #4 and #5 #6 {%
%
% ** get locations
  \!xone=\!M{#3}\!xunit  \!xtwo=\!M{#5}\!xunit%   
  \!yone=\!M{#4}\!yunit  \!ytwo=\!M{#6}\!yunit%           
  \ifdim \!xtwo<\!xone
    \!dimenI=\!xone  \!xone=\!xtwo  \!xtwo=\!dimenI
  \fi
  \ifdim \!ytwo<\!yone
    \!dimenI=\!yone  \!yone=\!ytwo  \!ytwo=\!dimenI
  \fi
  \!dimenI=#1\relax  \advance\!xone\!dimenI  \advance\!xtwo\!dimenI
  \!dimenI=#2\relax  \advance\!yone\!dimenI  \advance\!ytwo\!dimenI
  \let\!MRect=\!M%                  ** save current coord/dimen mode
  \!setdimenmode
%
% ** shade rectangle if appropriate
  \!shaderectangle
%
% ** draw horizontal edges
  \!dimenI=.5\linethickness
  \advance \!xone  -\!dimenI%       ** adjust x-location to overlap corners
  \advance \!xtwo   \!dimenI%       ** ditto
  \putrule from {\!xone} {\!yone} to {\!xtwo} {\!yone} 
  \putrule from {\!xone} {\!ytwo} to {\!xtwo} {\!ytwo} 
%
% ** draw vertical edges
  \advance \!xone   \!dimenI%       ** restore original x-values
  \advance \!xtwo  -\!dimenI% 
  \advance \!yone  -\!dimenI%       ** adjust y-location to overlap corners
  \advance \!ytwo   \!dimenI%       ** ditto
  \putrule from {\!xone} {\!yone} to {\!xone} {\!ytwo} 
  \putrule from {\!xtwo} {\!yone} to {\!xtwo} {\!ytwo} 
  \let\!M=\!MRect%                  ** restore coord/dimen mode
  \ignorespaces}
 
% ** \shaderectangleson 
% **   Subsequent rectangles will be shaded according to 
% **   the current shading pattern.  Affects \putrectangle, \putbar,
% **   \frame, \sethistograms, and \setbars. See Subsection 7.5 of the manual.

% ** \shaderectanglesoff 
% **    Suppresses  \shaderectangleson.  The default.
\def\shaderectanglesoff{%
  \def\!shaderectangle{}%
  \ignorespaces}

\shaderectanglesoff
 
% ** The following internal routine shades the current rectangle, when
% **   \!shaderectangle = \!!shaderectangle . 
\def\!!shaderectangle{%
  \!dimenA=\!xtwo  \advance \!dimenA -\!xone
  \!dimenB=\!ytwo  \advance \!dimenB -\!yone
  \ifdim \!dimenA<\!dimenB
    \!startvshade (\!xone,\!yone,\!ytwo)
    \!lshade      (\!xtwo,\!yone,\!ytwo)
  \else
    \!starthshade (\!yone,\!xone,\!xtwo)
    \!lshade      (\!ytwo,\!xone,\!xtwo)
  \fi
  \ignorespaces}
  
% ** \frame [<SEPARATION>] {TEXT}
% ** Draws a frame of thickness linethickness about the box enclosing
% **   TEXT; the frame is separated from the box by a distance of
% **   SEPARATION.  The result is an hbox with the same baseline as TEXT.
% **   If <SEPARATION> is omitted, you get the effect of <0pt>.
% ** See Subsection 4.2 of the manual.
\def\frame{%
  \!ifnextchar<{\!frame}{\!frame<\!zpt> }}
\long\def\!frame<#1> #2{%
  \beginpicture
    \setcoordinatesystem units <1pt,1pt> point at 0 0 
    \put {#2} [Bl] at 0 0 
    \!dimenA=#1\relax
    \!dimenB=\!wd \advance \!dimenB \!dimenA
    \!dimenC=\!ht \advance \!dimenC \!dimenA
    \!dimenD=\!dp \advance \!dimenD \!dimenA
    \let\!MFr=\!M
    \!setdimenmode
    \putrectangle corners at {-\!dimenA} {-\!dimenD} and {\!dimenB} {\!dimenC}
    \!setcoordmode
    \let\!M=\!MFr
  \endpicture
  \ignorespaces}
 
% ** \rectangle <WIDTH> <HEIGHT>
% ** Constructs a rectangle of width WIDTH and heigth HEIGHT. 
% ** See Subsection 4.2 of the manual.
\def\rectangle <#1> <#2> {%
  \setbox0=\hbox{}\wd0=#1\ht0=#2\frame {\box0}}

% *********************************************
% ***  CURVES  (Upper level \plot commands) ***
% *********************************************
%
% ** User commands
% **   \plot  DATA  /
% **   \plot  "FILE NAME"
% **   \setquadratic
% **   \setlinear
% **   \sethistograms
% **   \vshade  ...
% **   \hshade  ...

% \plot: multi-purpose command. Draws histograms, bar graphs, piecewise-linear
% or piecewise quadratic curves, depending on the setting of \!drawcurve.
% See Subsections 4.3-4.5, 5.1, 5.2 of the manual.

\def\!plotfromfile"#1"{%
  \expandafter\!drawcurve \input #1 /}

% Command to set piecewise quadratic mode
% See Subsections 5.1, 7.3, and 7.4 of the manual.
\def\setquadratic{%
  \let\!drawcurve=\!qcurve
  \let\!!Shade=\!!qShade
  \let\!!!Shade=\!!!qShade}

% Command to set piecewise linear mode
% See Subsections 5.1, 7.3, and 7.4 of the manual.
\def\setlinear{%
  \let\!drawcurve=\!lcurve
  \let\!!Shade=\!!lShade
  \let\!!!Shade=\!!!lShade}

% Command to set histogram mode
% See Subsection 4.3 of the manual.
\def\sethistograms{%
  \let\!drawcurve=\!hcurve}

% Commands to cycle through list of coordinates in piecewise quadratic 
% interpolation mode
\def\!qcurve #1 #2 {%
  \!start (#1,#2)
  \!Qjoin}
\def\!Qjoin#1 #2 #3 #4 {%
  \!qjoin (#1,#2) (#3,#4)             % \!qjoin  is defined in QUADRATIC
  \!ifnextchar/{\!finish}{\!Qjoin}}

% Commands to cycle through list of coordinates in piecewise linear 
% interpolation mode
\def\!lcurve #1 #2 {%
  \!start (#1,#2)
  \!Ljoin}
\def\!Ljoin#1 #2 {%
  \!ljoin (#1,#2)                    % \!ljoin  is defined in LINEAR
  \!ifnextchar/{\!finish}{\!Ljoin}}

\def\!finish/{\ignorespaces}

% Command to cycle through list of coordinates in histogram mode
\def\!hcurve #1 #2 {%
  \edef\!hxS{#1}%
  \edef\!hyS{#2}%
  \!hjoin}
\def\!hjoin#1 #2 {%
  \putrectangle corners at {\!hxS} {\!hyS} and {#1} {#2}
  \edef\!hxS{#1}%
  \!ifnextchar/{\!finish}{\!hjoin}}

% \vshade: See Subsection 7.3 of the manual.
\def\vshade #1 #2 #3 {%
  \!startvshade (#1,#2,#3)
  \!Shadewhat}

% \hshade: See Subsection 7.4 of the manual.
\def\hshade #1 #2 #3 {%
  \!starthshade (#1,#2,#3)
  \!Shadewhat}

% Commands to cycle through coordinates and optional "edge effect"
% fields while shading.
\def\!Shadewhat{%
  \futurelet\!nextchar\!Shade}
\def\!Shade{%
  \if <\!nextchar
    \def\!nextShade{\!!Shade}%
  \else
    \if /\!nextchar
      \def\!nextShade{\!finish}%
    \else
      \def\!nextShade{\!!!Shade}%
    \fi
  \fi
  \!nextShade}
\def\!!lShade<#1> #2 #3 #4 {%
  \!lshade <#1> (#2,#3,#4)                 % \!lshade is defined in SHADING
  \!Shadewhat}
\def\!!!lShade#1 #2 #3 {%
  \!lshade (#1,#2,#3)
  \!Shadewhat} 
\def\!!qShade<#1> #2 #3 #4 #5 #6 #7 {%
  \!qshade <#1> (#2,#3,#4) (#5,#6,#7)      % \!qshade is defined in SHADING
  \!Shadewhat}
\def\!!!qShade#1 #2 #3 #4 #5 #6 {%
  \!qshade (#1,#2,#3) (#4,#5,#6)
  \!Shadewhat} 

% ** Set default interpolation mode
\setlinear

\def\setdashpattern <#1>{%
  \def\!Flist{}\def\!Blist{}\def\!UDlist{}%
  \!countA=0
  \!ecfor\!item:=#1\do{%
    \!dimenA=\!item\relax
    \expandafter\!rightappend\the\!dimenA\withCS{\\}\to\!UDlist%
    \advance\!countA  1
    \ifodd\!countA
      \expandafter\!rightappend\the\!dimenA\withCS{\!Rule}\to\!Flist%
      \expandafter\!leftappend\the\!dimenA\withCS{\!Rule}\to\!Blist%
    \else 
      \expandafter\!rightappend\the\!dimenA\withCS{\!Skip}\to\!Flist%
      \expandafter\!leftappend\the\!dimenA\withCS{\!Skip}\to\!Blist%
    \fi}%
  \!leaderlength=\!zpt
  \def\!Rule##1{\advance\!leaderlength  ##1}%
  \def\!Skip##1{\advance\!leaderlength  ##1}%
  \!Flist%
  \ifdim\!leaderlength>\!zpt 
  \else
    \def\!Flist{\!Skip{24in}}\def\!Blist{\!Skip{24in}}\ignorespaces
    \def\!UDlist{\\{\!zpt}\\{24in}}\ignorespaces
    \!leaderlength=24in
  \fi
  \!dashingon}

%  **  \!dashingon  -- puts the curve drawing routines into dash mode
%  **  \!dashingoff -- puts the curve drawing routines into solid mode
%  **  These are internal commands, invoked by \setdashpattern and \setsolid
\def\!dashingon{%
  \def\!advancedashing{\!!advancedashing}%
  \def\!drawlinearsegment{\!lineardashed}%
  \def\!puthline{\!putdashedhline}%
  \def\!putvline{\!putdashedvline}%
  \ignorespaces}% 
\def\!dashingoff{%
  \def\!advancedashing{\relax}%
  \def\!drawlinearsegment{\!linearsolid}%
  \def\!puthline{\!putsolidhline}%
  \def\!putvline{\!putsolidvline}%
  \ignorespaces}

%  **  \setdots <LENGTH>  --  sets up a dot/skip pattern where dot (actually
%  **    the current plotsymbol) is plunked down once for every LENGTH 
%  **    traveled along the curve.  LENGTH defaults to 5pt.
%  **    See Subsection 6.1 of the manual.
\def\setdots{%
  \!ifnextchar<{\!setdots}{\!setdots<5pt>}}
\def\!setdots<#1>{%
  \!dimenB=#1\advance\!dimenB -\plotsymbolspacing
  \ifdim\!dimenB<\!zpt
    \!dimenB=\!zpt
  \fi
\setdashpattern <\plotsymbolspacing,\!dimenB>}
 
% ** \setdotsnear <LENGTH> for <ARC LENGTH>
% ** sets up a dot pattern where the dots are approximately LENGTH apart,
% ** the total length of the pattern is ARC LENGTH, and the pattern
% ** begins and ends with a dot. See Subsection 6.3 of the manual.
\def\setdotsnear <#1> for <#2>{%
  \!dimenB=#2\relax  \advance\!dimenB -.05pt  
  \!dimenC=#1\relax  \!countA=\!dimenC 
  \!dimenD=\!dimenB  \advance\!dimenD .5\!dimenC  \!countB=\!dimenD
  \divide \!countB  \!countA
  \ifnum 1>\!countB 
    \!countB=1
  \fi
  \divide\!dimenB  \!countB
  \setdots <\!dimenB>}
 
%  **  \setdashes <LENGTH>  --  sets up a dash/skip pattern where the dash
%  **    and the skip are each of length LENGTH (the dash is formed by
%  **    plunking down the current plotsymbol over an arc of length LENGTH
%  **    and so may actually be longer than LENGTH.  LENGTH defaults to 5pt.
%  **    See Subsection 6.1 of the manual.
\def\setdashes{%
  \!ifnextchar<{\!setdashes}{\!setdashes<5pt>}}
\def\!setdashes<#1>{\setdashpattern <#1,#1>}
 
% ** \setdashesnear ...
% ** Like \setdotsnear; the pattern begins and ends with a dash.
% ** See Subsection 6.3 of the manual.
\def\setdashesnear <#1> for <#2>{%
  \!dimenB=#2\relax  
  \!dimenC=#1\relax  \!countA=\!dimenC 
  \!dimenD=\!dimenB  \advance\!dimenD .5\!dimenC  \!countB=\!dimenD
  \divide \!countB  \!countA
  \ifodd \!countB 
  \else 
    \advance \!countB  1
  \fi
  \divide\!dimenB  \!countB
  \setdashes <\!dimenB>}
 
%  **  \setsolid  --  puts the curve drawing routines in "solid line" mode,
%  **    the default mode.  See Subsection 6.1 of the manual.
\def\setsolid{%
  \def\!Flist{\!Rule{24in}}\def\!Blist{\!Rule{24in}}%  
  \def\!UDlist{\\{24in}\\{\!zpt}}%
  \!dashingoff}  
\setsolid

%  **  \findlength {CURVE CMDS}
%  **  PiCTeX executes the \start, \ljoin, and \qjoin cmds comprising
%  **  CURVE CMDS without plotting anything, but stashes the length
%  **  of the phantom curve away in \totalarclength.
%  **  See Subsection 6.3 of the manual.

% *************************************************************
% *** DIVISION  (Does long division of dimension registers) ***
% *************************************************************
 
% ** User command:
% **   \Divide {DIVIDEND} by {DIVISOR} forming {RESULT}
  
% ** Internal command
% **   \!divide{DIVIDEND}{DIVISOR}{RESULT}
 
% **  \!divide DIVIDEND [by] DIVISOR [to get] ANSWER
% **  Divides the dimension DIVIDEND by the dimension DIVISOR, placing the 
% **  quotient in the dimension register ANSWER.  Values are understood to 
% **  be in points.  E.g.  12.5pt/1.4pt=8.92857pt.
% **  Quotient is accurate to 1/65536pt=2**[-16]pt
% **  |DIVISOR| should be < 2048pt (about 28 inches).
\def\!divide#1#2#3{%
  \!dimenB=#1%                      **  dimB  holds current remainder (r)
  \!dimenC=#2%                      **  dimC  holds divisor (d)
  \!dimenD=\!dimenB%                **  dimD  holds quotient q=r/d for this 
  \divide \!dimenD \!dimenC%        **    step, in units of scaled pts
  \!dimenA=\!dimenD%                **  dimA  eventually holds answer (a)
  \multiply\!dimenD \!dimenC%       **  r <-- r - dq
  \advance\!dimenB -\!dimenD%       **  First step complete. Have integer part
%                                   **  of a, and corresponding remainder.
  \!dimenD=\!dimenC%                **  Temporarily use dimD to hold |d|
    \ifdim\!dimenD<\!zpt \!dimenD=-\!dimenD 
  \fi
  \ifdim\!dimenD<64pt%              **  Branch on the magnitude of |d|
    \!divstep[\!tfs]\!divstep[\!tfs]%
  \else 
    \!!divide
  \fi
  #3=\!dimenA\ignorespaces}

% **  The following code handles divisors  d  with 
% **    (1)  .88in =  64pt <= d <  256pt =  3.54in
% **    (2) 3.54in = 256pt <= d < 2048pt = 28.34in
% **  Anything bigger than that may result in an overflow condition.
% **  For our purposes, we should never even see case (2).
\def\!!divide{%
  \ifdim\!dimenD<256pt
    \!divstep[64]\!divstep[32]\!divstep[32]%
  \else 
    \!divstep[8]\!divstep[8]\!divstep[8]\!divstep[8]\!divstep[8]%
    \!dimenA=2\!dimenA
  \fi}

% **  The following macro does the real long division work.
\def\!divstep[#1]{%                 **  #1 = "B"
  \!dimenB=#1\!dimenB%              **  r <-- B*r
  \!dimenD=\!dimenB%                **  dimD  holds quotient q=r/d for this 
    \divide \!dimenD by \!dimenC%   **    step, in units of scaled pts
  \!dimenA=#1\!dimenA%              **  a <-- B*a + q
    \advance\!dimenA by \!dimenD%
  \multiply\!dimenD by \!dimenC%    **  r <-- r - dq
    \advance\!dimenB by -\!dimenD}
 
% **  \Divide:  See Subsection 9.3 of the manual.
\def\Divide <#1> by <#2> forming <#3> {%
  \!divide{#1}{#2}{#3}}

% *********************************************
% *** ELLIPSES (Draws ellipses and circles) ***
% *********************************************
 
% ** User commands
% **   \ellipticalarc  axes ratio A:B  DEGREES degrees from XSTART YSTART 
% **      center at XCENTER YCENTER 
% **   \circulararc DEGREES degrees from XSTART YSTART 
% **      center at XCENTER YCENTER 
 
% ** Internal command
% **   \!sinandcos{32*ANGLE in radians}{32*SIN}{32*COS}

% **   \ellipticalarc  axes ratio A:B  DEGREES degrees from XSTART YSTART 
% **      center at XCENTER YCENTER 
% **    Draws a elliptical arc starting at the coordinate point (XSTART,YSTART).
% **    The center of the ellipse of which the arc is a segment is at 
% **      (XCENTER,YCENTER).
% **    The arc extends through an angle of DEGREES degrees (may be + or -).
% **    A:B is the ratio of the length of the xaxis to the length of
% **      the yaxis of the ellipse
% **    Sqrt{[(XSTART-XCENTER)/A]**2 + [(YSTART-YCENTER)/B]**2}
% **      must be < 512pt (about 7in).
% **    Doesn't modify the dimensions (ht, dp, wd) of the PiCture under
% **      construction.
 
% ** \circulararc  --  See Subsection 5.3 of the manual.

% ** \ellipticalarc  --  See Subsection 5.3 of the manual.
\def\ellipticalarc axes ratio #1:#2 #3 degrees from #4 #5 center at #6 #7 {%
  \!angle=#3pt\relax%                    ** get angle
  \ifdim\!angle>\!zpt 
    \def\!sign{}%                        ** counterclockwise
  \else 
    \def\!sign{-}\!angle=-\!angle%       ** clockwise
  \fi
  \!xxloc=\!M{#6}\!xunit%                ** convert CENTER to dimension
  \!yyloc=\!M{#7}\!yunit     
  \!xxS=\!M{#4}\!xunit%                  ** get STARTing point on rim of ellipse
  \!yyS=\!M{#5}\!yunit
  \advance\!xxS -\!xxloc%                ** make center of ellipse (0,0)
  \advance\!yyS -\!yyloc
  \!divide\!xxS{#1pt}\!xxS %             ** scale point on ellipse to point on 
  \!divide\!yyS{#2pt}\!yyS %                 corresponding circle
  \let\!MC=\!M%                          ** save current c/d mode
  \!setdimenmode%                        ** go into dimension mode
  \!xS=#1\!xxS  \advance\!xS\!xxloc
  \!yS=#2\!yyS  \advance\!yS\!yyloc
  \!start (\!xS,\!yS)%
  \!loop\ifdim\!angle>14.9999pt%         ** draw in major portion of ellipse 
    \!rotate(\!xxS,\!yyS)by(\!cos,\!sign\!sin)to(\!xxM,\!yyM) 
    \!rotate(\!xxM,\!yyM)by(\!cos,\!sign\!sin)to(\!xxE,\!yyE)
    \!xM=#1\!xxM  \advance\!xM\!xxloc  \!yM=#2\!yyM  \advance\!yM\!yyloc
    \!xE=#1\!xxE  \advance\!xE\!xxloc  \!yE=#2\!yyE  \advance\!yE\!yyloc
    \!qjoin (\!xM,\!yM) (\!xE,\!yE)
    \!xxS=\!xxE  \!yyS=\!yyE 
    \advance \!angle -15pt
  \repeat
  \ifdim\!angle>\!zpt%                   ** complete remaining arc, if any
    \!angle=100.53096\!angle%            ** convert angle to radians, divide
    \divide \!angle 360 %                **   by 2, and multiply by 32
    \!sinandcos\!angle\!!sin\!!cos%      ** get 32*sin & 32*cos
    \!rotate(\!xxS,\!yyS)by(\!!cos,\!sign\!!sin)to(\!xxM,\!yyM) 
    \!rotate(\!xxM,\!yyM)by(\!!cos,\!sign\!!sin)to(\!xxE,\!yyE)
    \!xM=#1\!xxM  \advance\!xM\!xxloc  \!yM=#2\!yyM  \advance\!yM\!yyloc
    \!xE=#1\!xxE  \advance\!xE\!xxloc  \!yE=#2\!yyE  \advance\!yE\!yyloc
    \!qjoin (\!xM,\!yM) (\!xE,\!yE)
  \fi
  \let\!M=\!MC%                          ** restore c/d mode
  \ignorespaces}%                        **   if appropriate

%  ** \!rotate(XREG,YREG)by(32cos,32sin)to(XXREG,YYREG)
%  ** rotates (XREG,YREG) by angle with specfied scaled cos & sin to
%  ** (XXREG,YYREG).  Uses \!dimenA & \!dimenB as scratch registers.
\def\!rotate(#1,#2)by(#3,#4)to(#5,#6){% 
  \!dimenA=#3#1\advance \!dimenA -#4#2%   ** Rcos(x+t)=Rcosx*cost - Rsinx*sint
  \!dimenB=#3#2\advance \!dimenB  #4#1%   ** Rsin(x+t)=Rsinx*cost + Rcosx*sint
  \divide \!dimenA 32  \divide \!dimenB 32 
  #5=\!dimenA  #6=\!dimenB
  \ignorespaces}
\def\!sin{4.17684}%                       ** 32*sin(pi/24) (pi/24=7.5deg)
\def\!cos{31.72624}%                      ** 32*cos(pi/24)

%  ** \!sinandcos{32*ANGLE in radians}{\SINCS}{\COSCS}
%  **   Computes the 32*sine and 32*cosine of a small ANGLE expressed in 
%  **   radians/32 and puts these values in the replacement texts of 
%  **   \SINCS and \COSCS
\def\!sinandcos#1#2#3{%
 \!dimenD=#1%                **  angle is expressed in radians/32: 1pt = 1/32rad
 \!dimenA=\!dimenD%          **  dimA will eventually contain 32sin(angle)in pts
 \!dimenB=32pt%              **  dimB will eventually contain 32cos(angle)in pts
 \!removept\!dimenD\!value%  **  get value of 32*angle, without "pt"
 \!dimenC=\!dimenD%          **  holds 32*angle**i/i! in pts
 \!dimenC=\!value\!dimenC \divide\!dimenC by 64 %   ** now 32*angle**2/2
 \advance\!dimenB by -\!dimenC%                     ** 32-32*angle**2/2
 \!dimenC=\!value\!dimenC \divide\!dimenC by 96 %   ** now 32*angle**3/3!
 \advance\!dimenA by -\!dimenC%                     ** now 32*(angle-angle**3/6)
 \!dimenC=\!value\!dimenC \divide\!dimenC by 128 %  ** now 32*angle**4/4!
 \advance\!dimenB by \!dimenC%
 \!removept\!dimenA#2%                              ** set 32*sin(angle)
 \!removept\!dimenB#3%                              ** set 32*cos(angle)
 \ignorespaces}

% *****************************************************************
% ***  RULES  (Draws rules, i.e., horizontal & vertical lines)  ***
% *****************************************************************

% **  User command:
% **    \putrule [<XDIMEN,YDIMEN>] from  XCOORD1 YCOORD1 
% **      to  XCOORD2 YCOORD2 

% **  Internal commands:
% **    \!puthline [<XDIMEN,YDIMEN>]    (h = horizontal)
% **      Set by dashpat to either: \!putsolidhline  or \!putdashedhline
% **    \!putvline [<XDIMEN,YDIMEN>]    (v = vertical)
% **      Either:  \!putsolidvline  or  \!putdashedvline

% **  \putrule [<XDIMEN,YDIMEN>] from XCOORD1 YCOORD1
% **    to XCOORD2 YCOORD2
% **  Draws a rule -- dashed or solid depending on the current dash pattern --
% **    from (X1,Y1) to (X2,Y2).  Uses TEK's  \hrule & \vrule & \leaders  
% **    constructions to handle horizontal & vertical lines efficiently both
% **    in terms of execution time and space in the DVI file.  
% **  See Subsection 4.1 of the manual.
\def\putrule#1from #2 #3 to #4 #5 {%
  \!xloc=\!M{#2}\!xunit  \!xxloc=\!M{#4}\!xunit%   
  \!yloc=\!M{#3}\!yunit  \!yyloc=\!M{#5}\!yunit%           
  \!dxpos=\!xxloc  \advance\!dxpos by -\!xloc
  \!dypos=\!yyloc  \advance\!dypos by -\!yloc
  \ifdim\!dypos=\!zpt
    \def\!!Line{\!puthline{#1}}\ignorespaces
  \else
    \ifdim\!dxpos=\!zpt
      \def\!!Line{\!putvline{#1}}\ignorespaces
    \else 
       \def\!!Line{}
    \fi
  \fi
  \let\!ML=\!M%           ** save current coord\dimen mode
  \!setdimenmode%         ** express locations in dimens
  \!!Line%
  \let\!M=\!ML%           ** restore previous c/d mode
  \ignorespaces}

% **  \!putsolidhline [<XDIMEN,YDIMEN>]
% **  Place horizontal solid line
\def\!putsolidhline#1{%
  \ifdim\!dxpos>\!zpt 
    \put{\!hline\!dxpos}#1[l] at {\!xloc} {\!yloc}
  \else 
    \put{\!hline{-\!dxpos}}#1[l] at {\!xxloc} {\!yyloc}
  \fi
  \ignorespaces}
 
% **  \!putsolidvline [shifted <XDIMEN,YDIMEN>]
% **  Place vertical solid line
\def\!putsolidvline#1{%
  \ifdim\!dypos>\!zpt 
    \put{\!vline\!dypos}#1[b] at {\!xloc} {\!yloc}
  \else 
    \put{\!vline{-\!dypos}}#1[b] at {\!xxloc} {\!yyloc}
  \fi
  \ignorespaces}
 
\def\!hline#1{\hbox to #1{\leaders \hrule height\linethickness\hfill}}
\def\!vline#1{\vbox to #1{\leaders \vrule width\linethickness\vfill}}

% **  \!putdashedhline [<XDIMEN,YDIMEN>]
% **  Place dashed horizontal line
\def\!putdashedhline#1{%
  \ifdim\!dxpos>\!zpt 
    \!DLsetup\!Flist\!dxpos
    \put{\hbox to \!totalleaderlength{\!hleaders}\!hpartialpattern\!Rtrunc}
      #1[l] at {\!xloc} {\!yloc} 
  \else 
    \!DLsetup\!Blist{-\!dxpos}
    \put{\!hpartialpattern\!Ltrunc\hbox to \!totalleaderlength{\!hleaders}}
      #1[r] at {\!xloc} {\!yloc} 
  \fi
  \ignorespaces}
 
% **  \!putdashedhline [<XDIMEN,YDIMEN>]
% **  Place dashed vertical line
\def\!putdashedvline#1{%
  \!dypos=-\!dypos%            ** vertical leaders go from top to bottom
  \ifdim\!dypos>\!zpt 
    \!DLsetup\!Flist\!dypos 
    \put{\vbox{\vbox to \!totalleaderlength{\!vleaders}
      \!vpartialpattern\!Rtrunc}}#1[t] at {\!xloc} {\!yloc} 
  \else 
    \!DLsetup\!Blist{-\!dypos}
    \put{\vbox{\!vpartialpattern\!Ltrunc
      \vbox to \!totalleaderlength{\!vleaders}}}#1[b] at {\!xloc} {\!yloc} 
  \fi
  \ignorespaces}

% **  The rest of the macros in this section are subroutines used by 
% **  \!putdashedhline and \!putdashedvline.
\def\!DLsetup#1#2{%            ** Dashed-Line set up
  \let\!RSlist=#1%             ** set !Rule-Skip list
  \!countB=#2%                 ** convert rule length to integer (number of sps)
  \!countA=\!leaderlength%     ** ditto, leaderlength
  \divide\!countB by \!countA% ** number of complete leader units
  \!totalleaderlength=\!countB\!leaderlength
  \!Rresiduallength=#2%
  \advance \!Rresiduallength by -\!totalleaderlength%  \** excess length
  \!Lresiduallength=\!leaderlength
  \advance \!Lresiduallength by -\!Rresiduallength
  \ignorespaces}
 
\def\!hleaders{%
  \def\!Rule##1{\vrule height\linethickness width##1}%
  \def\!Skip##1{\hskip##1}%
  \leaders\hbox{\!RSlist}\hfill}
 
\def\!hpartialpattern#1{%
  \!dimenA=\!zpt \!dimenB=\!zpt 
  \def\!Rule##1{#1{##1}\vrule height\linethickness width\!dimenD}%
  \def\!Skip##1{#1{##1}\hskip\!dimenD}%
  \!RSlist}
 
\def\!vleaders{%
  \def\!Rule##1{\hrule width\linethickness height##1}%
  \def\!Skip##1{\vskip##1}%
  \leaders\vbox{\!RSlist}\vfill}
 
\def\!vpartialpattern#1{%
  \!dimenA=\!zpt \!dimenB=\!zpt 
  \def\!Rule##1{#1{##1}\hrule width\linethickness height\!dimenD}%
  \def\!Skip##1{#1{##1}\vskip\!dimenD}%
  \!RSlist}
 
\def\!Rtrunc#1{\!trunc{#1}>\!Rresiduallength}
\def\!Ltrunc#1{\!trunc{#1}<\!Lresiduallength}
 
\def\!trunc#1#2#3{%          
  \!dimenA=\!dimenB         
  \advance\!dimenB by #1%
  \!dimenD=\!dimenB  \ifdim\!dimenD#2#3\!dimenD=#3\fi
  \!dimenC=\!dimenA  \ifdim\!dimenC#2#3\!dimenC=#3\fi
  \advance \!dimenD by -\!dimenC}

\def\!start (#1,#2){%
  \!plotxorigin=\!xorigin  \advance \!plotxorigin by \!plotsymbolxshift
  \!plotyorigin=\!yorigin  \advance \!plotyorigin by \!plotsymbolyshift
  \!xS=\!M{#1}\!xunit \!yS=\!M{#2}\!yunit
  \!rotateaboutpivot\!xS\!yS
  \!copylist\!UDlist\to\!!UDlist% **\!UDlist has the form \\{dimen1}\\{dimen2}..
%                                 ** Routine will draw dashed line with pen
%                                 ** down for dimen1, up for dimen2, ...
  \!getnextvalueof\!downlength\from\!!UDlist
  \!distacross=\!zpt%             ** 1st point goes at start of curve
  \!intervalno=0 %                ** initialize interval counter
  \global\totalarclength=\!zpt%   ** initialize distance traveled along curve
  \ignorespaces}

%  **  \!ljoin (XCOORD,YCOORD) 
%  **  Draws a straight line starting at the last point specified
%  **    by the most recent \!start, \!ljoin, or \!qjoin, and
%  **    ending at (XCOORD,YCOORD).
\def\!ljoin (#1,#2){%
  \advance\!intervalno by 1
  \!xE=\!M{#1}\!xunit \!yE=\!M{#2}\!yunit
  \!rotateaboutpivot\!xE\!yE
  \!xdiff=\!xE \advance \!xdiff by -\!xS%**  xdiff = xE - xS
  \!ydiff=\!yE \advance \!ydiff by -\!yS%**  ydiff = yE - yS
  \!Pythag\!xdiff\!ydiff\!arclength%     **  arclength = sqrt(xdiff**2+ydiff**2) 
  \global\advance \totalarclength by \!arclength%
  \!drawlinearsegment%   ** set by dashpat to \!linearsolid or \!lineardashed
  \!xS=\!xE \!yS=\!yE%   ** shift ending points to starting points
  \ignorespaces}

% **  The following routine is used to draw a "solid" line between (xS,yS)
% **  and (xE,yE).  Points are spaced nearly every  \plotsymbolspacing length
% **  along the line.  
\def\!linearsolid{%
  \!npoints=\!arclength
  \!countA=\plotsymbolspacing
  \divide\!npoints by \!countA%      ** now #pts =. arclength/plotsymbolspacing
  \ifnum \!npoints<1 
    \!npoints=1 
  \fi
  \divide\!xdiff by \!npoints
  \divide\!ydiff by \!npoints
  \!xpos=\!xS \!ypos=\!yS
  \loop\ifnum\!npoints>-1
    \!plotifinbounds
    \advance \!xpos by \!xdiff
    \advance \!ypos by \!ydiff
    \advance \!npoints by -1
  \repeat
  \ignorespaces}

% ** The following routine is used to draw a dashed line between (xS,yS)
% ** and (xE,yE). The dash pattern continues from the previous segment.
\def\!lineardashed{%
% **
  \ifdim\!distacross>\!arclength
    \advance \!distacross by -\!arclength  %nothing to plot in this interval
  \else
    \loop\ifdim\!distacross<\!arclength
%     ** plot point, interpolating linearly in x and y
      \!divide\!distacross\!arclength\!dimenA%  ** dimA = across/arclength
      \!removept\!dimenA\!t%  ** \!t holds value in dimA, without the "pt"
      \!xpos=\!t\!xdiff \advance \!xpos by \!xS
      \!ypos=\!t\!ydiff \advance \!ypos by \!yS
      \!plotifinbounds
      \advance\!distacross by \plotsymbolspacing
      \!advancedashing
    \repeat  
    \advance \!distacross by -\!arclength%    ** prepare for next interval 
  \fi
  \ignorespaces}

\def\!!advancedashing{%
  \advance\!downlength by -\plotsymbolspacing
  \ifdim \!downlength>\!zpt
  \else
    \advance\!distacross by \!downlength
    \!getnextvalueof\!uplength\from\!!UDlist
    \advance\!distacross by \!uplength
    \!getnextvalueof\!downlength\from\!!UDlist
  \fi}

% ** \inboundscheckoff & \inboundscheckon: See Subsection 5.5 of the manual.
\def\inboundscheckoff{%
  \def\!plotifinbounds{\!plot(\!xpos,\!ypos)}%
  \def\!initinboundscheck{\relax}\ignorespaces}
 
\inboundscheckoff
 
% ** The following code plots the current point only if it falls in the
% ** current plotarea.  It doesn't matter if the coordinate system has
% ** changed since the plotarea was set up.  However, shifts of the plot
% ** are ignored (how the plotsymbol stands relative to its plot position is
% ** unknown anyway).
\def\!!plotifinbounds{%
  \ifdim \!xpos<\!checkleft
  \else
    \ifdim \!xpos>\!checkright
    \else
      \ifdim \!ypos<\!checkbot
      \else
         \ifdim \!ypos>\!checktop
         \else
           \!plot(\!xpos,\!ypos)
         \fi 
      \fi
    \fi
  \fi}

\def\!!initinboundscheck{%
  \!checkleft=\!arealloc     \advance\!checkleft by \!xorigin
  \!checkright=\!arearloc    \advance\!checkright by \!xorigin
  \!checkbot=\!areabloc      \advance\!checkbot by \!yorigin
  \!checktop=\!areatloc      \advance\!checktop by \!yorigin}

% *********************************
% *** LOGTEN  (Log_10 function) ***
% *********************************
%
% ** \!logten{X}
% ** Calculates log_10 of X.  X and LOG10(X) are in fixed point notation.
% **  X must be positive; it may have an optional `+' sign; any number
% **  of digits may be specified for X.  The absolute error in LOG10(X) is
% **  less than .0001 (probably < .00006).  That's about as good as you
% **  hope for, since TEX only operates to 5 figures after the decimal
% **  point anyway.

%  \!rootten=3.162278pt       **** These are values are set in ALLOCATIONS
%  \!tenAe=2.543275pt  (=A5)
%  \!tenAc=2.773839pt  (=A3)
%  \!tenAa=8.690286pt  (=A1)

\def\!logten#1#2{%
  \expandafter\!!logten#1\!nil
  \!removept\!dimenF#2%
  \ignorespaces}

\def\!!logten#1#2\!nil{%
  \if -#1%
    \!dimenF=\!zpt
    \def\!next{\ignorespaces}%
  \else
    \if +#1%
      \def\!next{\!!logten#2\!nil}%
    \else
      \if .#1%
        \def\!next{\!!logten0.#2\!nil}%
      \else
        \def\!next{\!!!logten#1#2..\!nil}%
      \fi
    \fi
  \fi
  \!next}

\def\!!!logten#1#2.#3.#4\!nil{%
  \!dimenF=1pt %                 ** DimF holds log10 original argument
  \if 0#1%                      
    \!!logshift#3pt %            ** Argument < 1
  \else %                        ** Argument >= 1
    \!logshift#2/%               ** Shift decimal pt as many places
    \!dimenE=#1.#2#3pt %         **   as there are figures in #2
  \fi %                          ** Now dimE holds revised X want log10 of
  \ifdim \!dimenE<\!rootten%          ** Transform X to XX between sqrt(10) 
    \multiply \!dimenE 10 %           **   and 10*sqrt(10)
    \advance  \!dimenF -1pt
  \fi
  \!dimenG=\!dimenE%                  ** dimG <- (XX + 10)
    \advance\!dimenG 10pt
  \advance\!dimenE -10pt %            ** dimE <- (XX - 10)
  \multiply\!dimenE 10 %              ** dimE = 10*(XX-10)
  \!divide\!dimenE\!dimenG\!dimenE%   ** Now dimE=10t==10*(XX-10)/(XX+10)
  \!removept\!dimenE\!t%              ** !t=10t, with "pt" removed
  \!dimenG=\!t\!dimenE%               ** dimG=100t**2
  \!removept\!dimenG\!tt%             ** !tt=100t**2, with "pt" removed
  \!dimenH=\!tt\!tenAe%               ** dimH=10*a5*(10t)**2 /100
    \divide\!dimenH 100
  \advance\!dimenH \!tenAc%           ** ditto + 10*a3
  \!dimenH=\!tt\!dimenH%              ** ditto * (10t)**2 /100
    \divide\!dimenH 100   
  \advance\!dimenH \!tenAa%           ** ditto + 10*a1
  \!dimenH=\!t\!dimenH%               ** ditto * 10t / 100
    \divide\!dimenH 100 %             ** Now dimH = log10(XX) - 1
  \advance\!dimenF \!dimenH}%         ** dimF = log10(X)

\def\!logshift#1{%
  \if #1/%
    \def\!next{\ignorespaces}%
  \else
    \advance\!dimenF 1pt 
    \def\!next{\!logshift}%
  \fi 
  \!next}
 
 \def\!!logshift#1{%
   \advance\!dimenF -1pt
   \if 0#1%
     \def\!next{\!!logshift}%
   \else
     \if p#1%
       \!dimenF=1pt
       \def\!next{\!dimenE=1p}%
     \else
       \def\!next{\!dimenE=#1.}%
     \fi
   \fi
   \!next}

\def\beginpicture{%
  \setbox\!picbox=\hbox\bgroup%
  \!xleft=\maxdimen  
  \!xright=-\maxdimen
  \!ybot=\maxdimen
  \!ytop=-\maxdimen}
 
% **  \endpicture : See Subsection 1.1 of the manual.
\def\endpicture{%
  \ifdim\!xleft=\maxdimen%  ** check if nothing was put in picbox
    \!xleft=\!zpt \!xright=\!zpt \!ybot=\!zpt \!ytop=\!zpt 
  \fi
  \global\!Xleft=\!xleft \global\!Xright=\!xright
  \global\!Ybot=\!ybot \global\!Ytop=\!ytop
  \egroup%
  \ht\!picbox=\!Ytop  \dp\!picbox=-\!Ybot
  \ifdim\!Ybot>\!zpt
  \else 
    \ifdim\!Ytop<\!zpt
      \!Ybot=\!Ytop
    \else
      \!Ybot=\!zpt
    \fi
  \fi
  \hbox{\kern-\!Xleft\lower\!Ybot\box\!picbox\kern\!Xright}}
 
% **  \endpicturesave : See Subsection 8.4 of the manual.
\def\endpicturesave <#1,#2>{%
  \endpicture \global #1=\!Xleft \global #2=\!Ybot \ignorespaces}

% **   \setcoordinatesystem units <XUNIT,YUNIT> 
% **     point at XREF YREF  
% **   Each of `units <XUNIT,YUNIT>' and `point at XREF YREF' 
% **     are optional.
% **   Unit lengths must be given in dimensions (e.g., <10pt,1in>).
% **     Default unit lengths are 1pt, 1pt, or previous unit lengths.
% **   Reference point is specified in current units (e.g., 3 5 ). 
% **     Default reference point is 0 0 , or previous reference point.
% **   Unit lengths and reference points obey TEX's scoping rules.
% **   See Subsection 1.2 of the manual.
\def\setcoordinatesystem{%
  \!ifnextchar{u}{\!getlengths }
    {\!getlengths units <\!xunit,\!yunit>}}
\def\!getlengths units <#1,#2>{%
  \!xunit=#1\relax
  \!yunit=#2\relax
  \!ifcoordmode 
    \let\!SCnext=\!SCccheckforRP
  \else
    \let\!SCnext=\!SCdcheckforRP
  \fi
  \!SCnext}
\def\!SCccheckforRP{%
  \!ifnextchar{p}{\!cgetreference }
    {\!cgetreference point at {\!xref} {\!yref} }}
\def\!cgetreference point at #1 #2 {%
  \edef\!xref{#1}\edef\!yref{#2}%
  \!xorigin=\!xref\!xunit  \!yorigin=\!yref\!yunit  
  \!initinboundscheck % ** See linear.tex
  \ignorespaces}
\def\!SCdcheckforRP{%
  \!ifnextchar{p}{\!dgetreference}%
    {\ignorespaces}}
\def\!dgetreference point at #1 #2 {%
  \!xorigin=#1\relax  \!yorigin=#2\relax
  \ignorespaces}

%  ** \put {OBJECT} [XY] <XDIMEN,YDIMEN> at (XCOORD,YCOORD)
%  **   `[XY]' and `<XDIMEN,YDIMEN>' are optional.
%  **   First OBJECT is placed in an hbox (the "objectbox") and then a
%  **     "reference point" is assigned to the objectbox as follows:
%  **     [1] first, the reference point is taken to be the center of the box;
%  **     [2] next, centering is overridden by the specifications
%  **           X=l -- reference point along the left edge of the objectbox
%  **           X=r -- reference point along the right edge of the objectbox
%  **           Y=b -- reference point along the bottom edge of the objectbox
%  **           Y=B -- reference point along the Baseline of the objectbox
%  **           Y=t -- reference point along the top edge of the objectbox;
%  **     [3] finally the reference point is shifted left by XDIMEN, down
%  **           by YDIMEN  (both default to 0pt).
%  **   The objectbox is placed within PICBOX with its reference point at  
%  **     (XCOORD,YCOORD). 
%  **   If OBJECT is a saved box, say  box0, you have to write
%  **     \put{\box0}...   or  \put{\copy0}...
%  **   The objectbox is void after the put.
%  **   See Subsection 2.1 of the manual.
\long\def\put#1#2 at #3 #4 {%
  \!setputobject{#1}{#2}%
  \!xpos=\!M{#3}\!xunit  \!ypos=\!M{#4}\!yunit  
  \!rotateaboutpivot\!xpos\!ypos%
  \advance\!xpos -\!xorigin  \advance\!xpos -\!xshift
  \advance\!ypos -\!yorigin  \advance\!ypos -\!yshift
  \kern\!xpos\raise\!ypos\box\!putobject\kern-\!xpos%
  \!doaccounting\ignorespaces}
 
%  **   \multiput etc.  Like  \put.  The objectbox is not voided until the
%  **     termininating /, and is placed repeatedly with:
%  **     XCOORD YCOORD -- the objectbox is put down with its reference point
%  **       at (XCOORD,YCOORD);
%  **     *N DXCOORD DYCOORD -- each of N times the current
%  **       (xcoord,ycoord) is incremented by (DXCOORD,DYCOORD), and the
%  **       objectbox is put down with its reference point at (xcoord,ycoord)
%  **       (This specification has to follow an XCOORD YCOORD pair)
%  **     See Subsection 2.2 of the manual.
\long\def\multiput #1#2 at {%
  \!setputobject{#1}{#2}%
  \!ifnextchar"{\!putfromfile}{\!multiput}}
\def\!putfromfile"#1"{%
  \expandafter\!multiput \input #1 /}
\def\!multiput{%
  \futurelet\!nextchar\!!multiput}
\def\!!multiput{%
  \if *\!nextchar
    \def\!nextput{\!alsoby}%
  \else
    \if /\!nextchar
      \def\!nextput{\!finishmultiput}%
    \else
      \def\!nextput{\!alsoat}%
    \fi
  \fi
  \!nextput}
\def\!finishmultiput/{%
  \setbox\!putobject=\hbox{}%
  \ignorespaces}
 
%  **   \!alsoat XCOORD YCOORD 
%  **     The objectbox is put down with reference point at XCOORD,YCOORD
\def\!alsoat#1 #2 {%
  \!xpos=\!M{#1}\!xunit  \!ypos=\!M{#2}\!yunit  
  \!rotateaboutpivot\!xpos\!ypos%
  \advance\!xpos -\!xorigin  \advance\!xpos -\!xshift
  \advance\!ypos -\!yorigin  \advance\!ypos -\!yshift
  \kern\!xpos\raise\!ypos\copy\!putobject\kern-\!xpos%
  \!doaccounting
  \!multiput}
 
% **   \!alsoby*N DXCOORD DYCOORD
% **     N times, the current (XCOORD,YCOORD) is advanced by (DXCOORD,DYCOORD),
% **     and the current (shifted, oriented) OBJECT is put down.
\def\!alsoby*#1 #2 #3 {%
  \!dxpos=\!M{#2}\!xunit \!dypos=\!M{#3}\!yunit 
  \!rotateonly\!dxpos\!dypos
  \!ntemp=#1%
  \!!loop\ifnum\!ntemp>0
    \advance\!xpos by \!dxpos  \advance\!ypos by \!dypos
    \kern\!xpos\raise\!ypos\copy\!putobject\kern-\!xpos%
    \advance\!ntemp by -1
  \repeat
  \!doaccounting 
  \!multiput}
 
% **  \accountingoff : Suspends PiCTeX's accounting of the aggregate
% **    size of the picture box.
% **  \accounting on : Reinstates accounting.
% **  See Subsection 8.2 of the manual.
\def\accountingon{\def\!doaccounting{\!!doaccounting}\ignorespaces}

\accountingon
\def\!!doaccounting{%
  \!xtemp=\!xpos  
  \!ytemp=\!ypos
  \ifdim\!xtemp<\!xleft 
     \!xleft=\!xtemp 
  \fi
  \advance\!xtemp by  \!wd 
  \ifdim\!xright<\!xtemp 
    \!xright=\!xtemp
  \fi
  \advance\!ytemp by -\!dp
  \ifdim\!ytemp<\!ybot  
    \!ybot=\!ytemp
  \fi
  \advance\!ytemp by  \!dp
  \advance\!ytemp by  \!ht 
  \ifdim\!ytemp>\!ytop  
    \!ytop=\!ytemp  
  \fi}
 
\long\def\!setputobject#1#2{%
  \setbox\!putobject=\hbox{#1}%
  \!ht=\ht\!putobject  \!dp=\dp\!putobject  \!wd=\wd\!putobject
  \wd\!putobject=\!zpt
  \!xshift=.5\!wd   \!yshift=.5\!ht   \advance\!yshift by -.5\!dp
  \edef\!putorientation{#2}%
  \expandafter\!SPOreadA\!putorientation[]\!nil%
  \expandafter\!SPOreadB\!putorientation<\!zpt,\!zpt>\!nil\ignorespaces}
 
\def\!SPOreadA#1[#2]#3\!nil{\!etfor\!orientation:=#2\do\!SPOreviseshift}
 
\def\!SPOreadB#1<#2,#3>#4\!nil{\advance\!xshift by -#2\advance\!yshift by -#3}
 
\def\!SPOreviseshift{%
  \if l\!orientation 
    \!xshift=\!zpt
  \else 
    \if r\!orientation 
      \!xshift=\!wd
    \else 
      \if b\!orientation
        \!yshift=-\!dp
      \else 
        \if B\!orientation 
          \!yshift=\!zpt
        \else 
          \if t\!orientation 
            \!yshift=\!ht
          \fi 
        \fi
      \fi
    \fi
  \fi}

%  **  \!dimenput{OBJECT} <XDIMEN,YDIMEN> [XY] (XLOC,YLOC)
%  **    This is an internal put routine, similar to \put, except that
%  **    XLOC=distance right from reference point, YLOC=distance up from
%  **    reference point. XLOC and YLOC are dimensions, so this routine
%  **    is completely independent of the current coordinate system. 
%  **    This routine does NOT do ROTATIONS.
\long\def\!dimenput#1#2(#3,#4){%
  \!setputobject{#1}{#2}%
  \!xpos=#3\advance\!xpos by -\!xshift
  \!ypos=#4\advance\!ypos by -\!yshift
  \kern\!xpos\raise\!ypos\box\!putobject\kern-\!xpos%
  \!doaccounting\ignorespaces}

%  ** The following macros permit the picture drawing routines to be used 
%  ** either in the default "coordinate mode", or in "dimension mode".
%  **   In coordinate mode  \!M(1.5,\!xunit)    expands to  1.5\!xunit
%  **   In dimension  mode  \!M(1.5pt,\!xunit)  expands to  1.5pt
%  ** Dimension mode is useful in coding macros.
%  ** Any special purpose picture macro that sets dimension mode should 
%  ** reset coordinate mode before completion.
%  ** See Subsection 9.2 of the manual.
\def\!setdimenmode{%
  \let\!M=\!M!!\ignorespaces}
\def\!setcoordmode{%
  \let\!M=\!M!\ignorespaces}
\def\!ifcoordmode{%
  \ifx \!M \!M!}
\def\!ifdimenmode{%
  \ifx \!M \!M!!}
\def\!M!#1#2{#1#2} 
\def\!M!!#1#2{#1}
\!setcoordmode
\let\setdimensionmode=\!setdimenmode
\let\setcoordinatemode=\!setcoordmode

%  ** \Xdistance{XCOORD}, \Ydistance{YCOORD}  are the horizontal and
%  **   vertical distances from the origin (0,0) to the point
%  **   (XCOORD,YCOORD)  in the current coordinate system.
%  ** See Subsection 9.2 of the manual.

% ** The following macros -- \stack, \line, and \Lines -- are useful for
% **   annotating PiCtures. They can be used outside the \beginpicture ...
% **   \endpicture environment.

% ** \stack [POSITIONING] <LEADING> {VALUESLIST}
% ** Builds a vertical stack of the values in VALUESLIST. Values in
% ** VALUESLIST are separated by commas.  In the resulting stack, values are
% ** centered by default, and positioned flush left (right) if 
% ** POSITIONING = l (r).  Values are separated vertically by LEADING,
% ** which defaults to \stackleading.
% ** See Subsection 2.3 of the manual.

\def\!stack[#1]{%
  \let\!lglue=\hfill \let\!rglue=\hfill
  \expandafter\let\csname !#1glue\endcsname=\relax
  \!ifnextchar<{\!!stack}{\!!stack<\stackleading>}}
\def\!!stack<#1>#2{%
  \vbox{\def\!valueslist{}\!ecfor\!value:=#2\do{%
    \expandafter\!rightappend\!value\withCS{\\}\to\!valueslist}%
    \!lop\!valueslist\to\!value
    \let\\=\cr\lineskiplimit=\maxdimen\lineskip=#1%
    \baselineskip=-1000pt\halign{\!lglue##\!rglue\cr \!value\!valueslist\cr}}%
  \ignorespaces}

% ** \lines [POSITIONING] {LINES}
% ** Builds a vertical array of the lines in LINES. Each line in LINES
% ** is terminated by a \cr.  In the resulting array, lines are
% ** centered by default, and positioned flush left (right) if 
% ** POSITIONING = l (r).  The lines in the array are subject to TeX's
% ** usual spacing rules: in particular the baselines are ordinarily an equal
% ** distance apart. The baseline of the array is the baseline of the
% ** the bottom line.
% ** See Subsection 2.3 of the manual.

\def\!lines[#1]#2{%
  \let\!lglue=\hfill \let\!rglue=\hfill
  \expandafter\let\csname !#1glue\endcsname=\relax
  \vbox{\halign{\!lglue##\!rglue\cr #2\crcr}}%
  \ignorespaces}

% ** \Lines [POSITIONING] {LINES}
% ** Like \lines, but the baseline of the array is the baseline of the
% ** top line.  See Subsection 2.3 of the manual.

\def\!Lines[#1]#2{%
  \let\!lglue=\hfill \let\!rglue=\hfill
  \expandafter\let\csname !#1glue\endcsname=\relax
  \vtop{\halign{\!lglue##\!rglue\cr #2\crcr}}%
  \ignorespaces}

% *********************************************
% *** PLOTTING (Things to do with plotting) ***
% *********************************************
 
% **  User commands
% **    \setplotsymbol ({PLOTSYMBOL} [ORIENTATION] <XSHIFT,YSHIFT>)
% **    \savelinesandcurves on "FILE_NAME"
% **    \dontsavelinesandcurves
% **    \writesavefile {MESSAGE}
% **    \replot {FILE_NAME}
 
% **  Internal command
% **    \!plot(XDIMEN,YDIMEN)
 
% **  \setplotsymbol ({PLOTSYMBOL} [ ] < , >)
% **  Save PLOTSYMBOL away in an hbox for use with curve plotting routines
% **  See Subsection 5.2 of the manual.
\def\setplotsymbol(#1#2){%
  \!setputobject{#1}{#2}
  \setbox\!plotsymbol=\box\!putobject%
  \!plotsymbolxshift=\!xshift 
  \!plotsymbolyshift=\!yshift 
  \ignorespaces}
 
\setplotsymbol({\fiverm .})%       ** initialize plotsymbol

% **  \!plot is either \!!plot (when no lines and curves are being saved) or
% **                   \!!!plot (when   lines and curves are being saved)
 
% **  \!!plot(XDIMEN,YDIMEN)
% **  Places the current plotsymbol a horizontal distance=XDIMEN-xorigin 
% **    and a vertical distance=YDIMEN-yorigin from the current
% **    reference point.  
\def\!!plot(#1,#2){%
  \!dimenA=-\!plotxorigin \advance \!dimenA by #1%    ** over
  \!dimenB=-\!plotyorigin \advance \!dimenB by #2%    ** up
  \kern\!dimenA\raise\!dimenB\copy\!plotsymbol\kern-\!dimenA%
  \ignorespaces}
 
% **  \!!!plot(XDIMEN,YDIMEN)
% **  Like \!!plot, but also saves the plot location in units of 
% **    scaled point, on file `replotfile'
\def\!!!plot(#1,#2){%
  \!dimenA=-\!plotxorigin \advance \!dimenA by #1%    ** over
  \!dimenB=-\!plotyorigin \advance \!dimenB by #2%    ** up
  \kern\!dimenA\raise\!dimenB\copy\!plotsymbol\kern-\!dimenA%
  \!countE=\!dimenA
  \!countF=\!dimenB
  \immediate\write\!replotfile{\the\!countE,\the\!countF.}%
  \ignorespaces}

% ** \savelinesandcurves on "FILE_NAME"
% **   Switch to save locations used for plotting lines and curves
% **   (No advantage in saving locations for solid lines; however
% **   replotting curve locations speeds things up by a factor of about 4. 
% ** \dontsavelinesandcurves
% **   Terminates \savelinesandcurves. The default.
% ** See Subsection 5.6 of the manual.
\def\savelinesandcurves on "#1" {%
  \immediate\closeout\!replotfile
  \immediate\openout\!replotfile=#1%
  \let\!plot=\!!!plot}

\def\dontsavelinesandcurves {%
  \let\!plot=\!!plot}
\dontsavelinesandcurves

% ** \writesavefile {MESSAGE}
% ** The message is preceded by a "%", so that it won't interfere
% ** with replotting.
% ** See Subsection 5.6 of the manual.
{\catcode`\%=11\xdef\!Commentsignal{%}}
\def\writesavefile#1 {%
  \immediate\write\!replotfile{\!Commentsignal #1}%
  \ignorespaces}

% ** \replot "FILE_NAME"
% **   Replots the locations saved earlier under \savelinesandcurves
% **   on "FILE_NAME"
% ** See Subsection 5.6 of the manual.
\def\replot"#1" {%
  \expandafter\!replot\input #1 /}
\def\!replot#1,#2. {%
  \!dimenA=#1sp
  \kern\!dimenA\raise#2sp\copy\!plotsymbol\kern-\!dimenA
  \futurelet\!nextchar\!!replot}
\def\!!replot{%
  \if /\!nextchar 
    \def\!next{\!finish}%
  \else
    \def\!next{\!replot}%
  \fi
  \!next}
% **************************************************
% ***  PYTHAGORAS  (Euclidean distance function) ***
% **************************************************

% ** User command:
% **   \placehypotenuse for <dimension1> and <dimension2> in <register> 

% ** Internal command:
% **   \!Pythag{X}{Y}{Z}
% **     Input X,Y are dimensions, or dimension registers.
% **     Output Z == sqrt(X**2+Y**2) must be a dimension register.
% **     Assumes that |X|+|Y| < 2048pt (about 28in).
 
% ** Without loss of generality, suppose  x>0, y>0.  Put s = x+y,
% **   z = sqrt(x**2+y**2). Then  z = s*f,  where  f = sqrt(t**2 + (1-t)**2)
% **   = sqrt((1+tau**2)/2), where  t = x/s  and  tau = 2(t-1/2) .
 
% ** Uses the \!divide macro (which uses registers \!dimenA--\!dimenD.
% ** Uses the \!removept macro   (e.g., 123.45pt --> 123.45)
% ** Uses registers \!dimenE--\!dimenI.
\def\!Pythag#1#2#3{%
  \!dimenE=#1\relax                                     
  \ifdim\!dimenE<\!zpt 
    \!dimenE=-\!dimenE 
  \fi%                                            ** dimE = |x|
  \!dimenF=#2\relax
  \ifdim\!dimenF<\!zpt 
    \!dimenF=-\!dimenF 
  \fi%                                            ** dimF = |y|
  \advance \!dimenF by \!dimenE%                  ** dimF = s = |x|+|y|
  \ifdim\!dimenF=\!zpt 
    \!dimenG=\!zpt%                               ** dimG = z = sqrt(x**2+y**2)
  \else 
    \!divide{8\!dimenE}\!dimenF\!dimenE%          ** now dimE = 8t = (8|x|)/s
    \advance\!dimenE by -4pt%                     ** 8tau = (8t-4)*2
      \!dimenE=2\!dimenE%                         **   (tau = 2*t - 1)
    \!removept\!dimenE\!!t%                       ** 8tau, without "pt"
    \!dimenE=\!!t\!dimenE%                        ** (8tau)**2, in pts
    \advance\!dimenE by 64pt%                     ** u = [64 + (8tau)**2]/2
    \divide \!dimenE by 2%                        **   [u = (8f)**2]
    \!dimenH=7pt%                                 ** initial guess g at sqrt(u)
    \!!Pythag\!!Pythag\!!Pythag%                  ** 3 iterations give sqrt(u)
    \!removept\!dimenH\!!t%                       ** 8f=sqrt(u), without "pt"
    \!dimenG=\!!t\!dimenF%                        ** z = (8f)*s/8
    \divide\!dimenG by 8
  \fi
  #3=\!dimenG
  \ignorespaces}

\def\!!Pythag{%                                   ** Newton-Raphson for sqrt
  \!divide\!dimenE\!dimenH\!dimenI%               ** v = u/g
  \advance\!dimenH by \!dimenI%                   ** g <-- (g + u/g)/2
    \divide\!dimenH by 2}

% **  \placehypotenuse for <XI> and <ETA> in <ZETA>
% **  See Subsection 9.3 of the manual.
\def\placehypotenuse for <#1> and <#2> in <#3> {%
  \!Pythag{#1}{#2}{#3}}

% **********************************************
% *** QUADRATIC ARC  (Draws a quadratic arc) ***
% **********************************************
 
% **  Internal command
% **    \!qjoin (XCOORD1,YCOORD1) (XCOORD2,YCOORD2)
 
% **  \!qjoin (XCOORD1,YCOORD1) (XCOORD2,YCOORD2)
% **  Draws an arc starting at the (last) point specified by the most recent
% **  \!qjoin, or \!ljoin, or \!start  and passing through (X_1,Y_1), (X_2,Y_2).
% **  Uses quadratic interpolation in both  x  and  y:  
% **    x(t), 0 <= t <= 1, interpolates  x_0, x_1, x_2  at  t=0, .5, 1
% **    y(t), 0 <= t <= 1, interpolates  y_0, y_1, y_2  at  t=0, .5, 1
 
\def\!qjoin (#1,#2) (#3,#4){%
  \advance\!intervalno by 1
  \!ifcoordmode
    \edef\!xmidpt{#1}\edef\!ymidpt{#2}%
  \else
    \!dimenA=#1\relax \edef\!xmidpt{\the\!dimenA}%
    \!dimenA=#2\relax \edef\!ymidpt{\the\!dimenA}%
  \fi
  \!xM=\!M{#1}\!xunit  \!yM=\!M{#2}\!yunit   \!rotateaboutpivot\!xM\!yM
  \!xE=\!M{#3}\!xunit  \!yE=\!M{#4}\!yunit   \!rotateaboutpivot\!xE\!yE
%
% ** Find coefficients for x(t)=a_x + b_x*t + c_x*t**2
  \!dimenA=\!xM  \advance \!dimenA by -\!xS%   ** dimA = I = xM - xS
  \!dimenB=\!xE  \advance \!dimenB by -\!xM%   ** dimB = II = xE-xM
  \!xB=3\!dimenA \advance \!xB by -\!dimenB%   ** b=3I-II
  \!xC=2\!dimenB \advance \!xC by -2\!dimenA%  ** c=2(II-I)
%
% ** Find coefficients for y(t)=y_x + b_y*t + c_y*t**2
  \!dimenA=\!yM  \advance \!dimenA by -\!yS%   
  \!dimenB=\!yE  \advance \!dimenB by -\!yM%  
  \!yB=3\!dimenA \advance \!yB by -\!dimenB%  
  \!yC=2\!dimenB \advance \!yC by -2\!dimenA% 
%
% ** Use Simpson's rule to calculate arc length over [0,1/2]:
% **   arc length = 1/2[1/6 f(0) + 4/6 f(1/4) + 1/6 f(1/2)]
% ** with f(t) = sqrt(x'(t)**2 + y'(t)**2).
  \!xprime=\!xB  \!yprime=\!yB%          ** x'(t) = b + 2ct
  \!dxprime=.5\!xC  \!dyprime=.5\!yC%    ** dt=1/4 ==> dx'(t) = c/2
  \!getf \!midarclength=\!dimenA
  \!getf \advance \!midarclength by 4\!dimenA
  \!getf \advance \!midarclength by \!dimenA
  \divide \!midarclength by 12
%
% ** Get arc length over [0,1].
  \!arclength=\!dimenA
  \!getf \advance \!arclength by 4\!dimenA
  \!getf \advance \!arclength by \!dimenA
  \divide \!arclength by 12%             ** Now have arc length over [1/2,1]
  \advance \!arclength by \!midarclength
  \global\advance \totalarclength by \!arclength
%
%
% ** Check to see if there's anything to plot in this interval
  \ifdim\!distacross>\!arclength 
    \advance \!distacross by -\!arclength%   ** nothing 
  \else
    \!initinverseinterp%  ** initialize for inverse interpolation on arc length
    \loop\ifdim\!distacross<\!arclength%     ** loop over points on arc 
      \!inverseinterp%    ** find  t  such that arc length[0,t] = distacross,
%                         **   using inverse quadratic interpolation
%                         ** now evaluate x(t)=(c*t + b)*t + a
      \!xpos=\!t\!xC \advance\!xpos by \!xB
        \!xpos=\!t\!xpos \advance \!xpos by \!xS
%                                             ** evaluate y(t)
      \!ypos=\!t\!yC \advance\!ypos by \!yB
        \!ypos=\!t\!ypos \advance \!ypos by \!yS
      \!plotifinbounds%                       ** plot point if in bounds
      \advance\!distacross \plotsymbolspacing%** advance arc length for next pt
      \!advancedashing%                       ** see "linear"
    \repeat  
    \advance \!distacross by -\!arclength%    ** prepare for next interval 
  \fi
  \!xS=\!xE%              ** shift ending points to starting points
  \!yS=\!yE
  \ignorespaces}

% ** \!getf -- Calculates sqrt(x'(t)**2 + y'(t)**2) and advances
% **   x'(t) and y'(t)
\def\!getf{\!Pythag\!xprime\!yprime\!dimenA%
  \advance\!xprime by \!dxprime
  \advance\!yprime by \!dyprime}

% ** \!initinverseinterp -- initializes for inverse quadratic interpolation
% ** of arc length provided  1/3 < midarclength/arclength < 2/3; otherwise
% ** initializes for inverse linear interpolation.
\def\!initinverseinterp{%
  \ifdim\!arclength>\!zpt
    \!divide{8\!midarclength}\!arclength\!dimenE% ** dimE=8w=8r/s, where  r 
%                                               **  = midarclength, s=arclength
% **  Test for  w  out of range:  w<1/3  or w>2/3
    \ifdim\!dimenE<\!wmin \!setinverselinear
    \else 
      \ifdim\!dimenE>\!wmax \!setinverselinear
      \else%                                    ** w  in range: initialize
        \def\!inverseinterp{\!inversequad}\ignorespaces
%
% **     Calculate the coefficients  \!beta  and  \!gamma  of the quadratic
% **                    t = \!beta*v + \!gamma*v**2
% **     taking the values  t=0, 1/2, 1  at  v=0, w==r/s, 1  respectively:
% **        \!beta = (1/2 - w**2)/[w(1-w)] 
% **        \!gamma = 1 - beta.
%
         \!removept\!dimenE\!Ew%           **  8w, without "pt"
         \!dimenF=-\!Ew\!dimenE%           **  -(8w)**2
         \advance\!dimenF by 32pt%         **  32 - (8w)**2
         \!dimenG=8pt 
         \advance\!dimenG by -\!dimenE%    **  8 - 8w
         \!dimenG=\!Ew\!dimenG%            **  (8w)*(8-8w)
         \!divide\!dimenF\!dimenG\!beta%   **  beta = (32-(8w)**2)/(8w(8-8w))
%                                          **       = (1/2 - w**2)/(w(1-w))
         \!gamma=1pt
         \advance \!gamma by -\!beta%      **  gamma = 1-beta
      \fi%       ** end of the \ifdim\!dimenE>\!wmax
    \fi%         ** end of the \ifdim\!dimenE<\!wmin
  \fi%           ** end of the \ifdim\!arclength>\!zpt
  \ignorespaces}

% ** For 0 <= t <= 1, let AL(t) = arclength[0,t]/arclength[0,1]; note
% ** AL(0)=0, AL(1/2)=midarclength/arclength, AL(1)=1.  This routine
% ** calculates an approximation to AL^{-1}(distance across/arclength),
% ** using the assumption that AL^{-1} is quadratic.  Specifically, 
% ** it finds  t  such that
% **    AL^{-1}(v) =. t = v*(\!beta + \!gamma*v)
% ** where  \!beta  and  \!gamma  are set by \!initinv, and where
% ** v=distance across/arclength
\def\!inversequad{%
  \!divide\!distacross\!arclength\!dimenG%   ** dimG = v = distacross/arclength
  \!removept\!dimenG\!v%                     ** v, without "pt"
  \!dimenG=\!v\!gamma%                       ** gamma*v
  \advance\!dimenG by \!beta%                ** beta + gamma*v
  \!dimenG=\!v\!dimenG%                      ** t = v*(beta + gamma*v)
  \!removept\!dimenG\!t}%                    ** t, without "pt"

% ** When  w <= 1/3  or  w >= 2/3, the following routine writes (using
% ** plain TEK's \wlog command) a warning message on the user's log file,
% ** and initializes for inverse linear interpolation on arc length.
\def\!setinverselinear{%
  \def\!inverseinterp{\!inverselinear}%
  \divide\!dimenE by 8 \!removept\!dimenE\!t
  \!countC=\!intervalno \multiply \!countC 2
  \!countB=\!countC     \advance \!countB -1
  \!countA=\!countB     \advance \!countA -1
  \wlog{\the\!countB th point (\!xmidpt,\!ymidpt) being plotted 
    doesn't lie in the}%
  \wlog{ middle third of the arc between the \the\!countA th 
    and \the\!countC th points:}%
  \wlog{ [arc length \the\!countA\space to \the\!countB]/[arc length 
    \the \!countA\space to \the\!countC]=\!t.}%
  \ignorespaces}
 
% **  Inverse linear interpolation
\def\!inverselinear{% 
  \!divide\!distacross\!arclength\!dimenG
  \!removept\!dimenG\!t}

% **************************************
% **  ROTATIONS  (Handles rotations) ***
% **************************************
 
% ** User commands
% **   \startrotation [by COS_OF_ANGLE SIN_OF_ANGLE] [about XPIVOT YPIVOT]
% **   \stoprotation

% **   \startrotation [by COS_OF_ANGLE SIN_OF_ANGLE] [about XPIVOT YPIVOT]
% ** Future (XCOORD,YCOORD)'s will be rotated about (XPIVOT,YPIVOT) 
% ** by the angle with the give COS and SIN. Both fields are optional.
% ** [COS,SIN] defaults to previous value, or (1,0).
% ** (XPIVOT,YPIVOT) defaults to previous value, or (0,0)
% ** You can't change the coordinate system in the scope of a rotation.
% ** See Subsection 9.1 of the manual.
\def\startrotation{%
  \let\!rotateaboutpivot=\!!rotateaboutpivot
  \let\!rotateonly=\!!rotateonly
  \!ifnextchar{b}{\!getsincos }%
    {\!getsincos by {\!cosrotationangle} {\!sinrotationangle} }}
\def\!getsincos by #1 #2 {%
  \edef\!cosrotationangle{#1}%
  \edef\!sinrotationangle{#2}%
  \!ifcoordmode 
    \let\!ROnext=\!ccheckforpivot
  \else
    \let\!ROnext=\!dcheckforpivot
  \fi
  \!ROnext}
\def\!ccheckforpivot{%
  \!ifnextchar{a}{\!cgetpivot}%
    {\!cgetpivot about {\!xpivotcoord} {\!ypivotcoord} }}
\def\!cgetpivot about #1 #2 {%
  \edef\!xpivotcoord{#1}%
  \edef\!ypivotcoord{#2}%
  \!xpivot=#1\!xunit  \!ypivot=#2\!yunit
  \ignorespaces}
\def\!dcheckforpivot{%
  \!ifnextchar{a}{\!dgetpivot}{\ignorespaces}}
\def\!dgetpivot about #1 #2 {%
  \!xpivot=#1\relax  \!ypivot=#2\relax
  \ignorespaces}

% ** Following terminates rotation.
% ** See Subsection 9.1 of the manual.
\def\stoprotation{%
  \let\!rotateaboutpivot=\!!!rotateaboutpivot
  \let\!rotateonly=\!!!rotateonly
  \ignorespaces}
 
% ** !!rotateaboutpivot{XREG}{YREG}
% ** XREG <-- xpvt + cos(angle)*(XREG-xpvt) - sin(angle)*(YREG-ypvt)
% ** YREG <-- ypvt + cos(angle)*(YREG-ypvt) + sin(angle)*(XREG-xpvt)
% ** XREG,YREG are dimension registers. Can't be \!dimenA to \!dimenD
\def\!!rotateaboutpivot#1#2{%
  \!dimenA=#1\relax  \advance\!dimenA -\!xpivot
  \!dimenB=#2\relax  \advance\!dimenB -\!ypivot
  \!dimenC=\!cosrotationangle\!dimenA
    \advance \!dimenC -\!sinrotationangle\!dimenB
  \!dimenD=\!cosrotationangle\!dimenB
    \advance \!dimenD  \!sinrotationangle\!dimenA
  \advance\!dimenC \!xpivot  \advance\!dimenD \!ypivot
  #1=\!dimenC  #2=\!dimenD
  \ignorespaces}

% ** \!!rotateonly{XREG}{YREG}
% ** Like \!!rotateaboutpivot, but with a pivot of  (0,0)
\def\!!rotateonly#1#2{%
  \!dimenA=#1\relax  \!dimenB=#2\relax 
  \!dimenC=\!cosrotationangle\!dimenA
    \advance \!dimenC -\!rotsign\!sinrotationangle\!dimenB
  \!dimenD=\!cosrotationangle\!dimenB
    \advance \!dimenD  \!rotsign\!sinrotationangle\!dimenA
  #1=\!dimenC  #2=\!dimenD
  \ignorespaces}
\def\!rotsign{}
\def\!!!rotateaboutpivot#1#2{\relax}
\def\!!!rotateonly#1#2{\relax}
\stoprotation

\def\!reverserotateonly#1#2{%
  \def\!rotsign{-}%
  \!rotateonly{#1}{#2}%
  \def\!rotsign{}%
  \ignorespaces}

\def\!getspan span <#1>{%
  \!dshade=#1\relax
  \!ifcoordmode 
    \let\!GRnext=\!GRccheckforAP
  \else
    \let\!GRnext=\!GRdcheckforAP
  \fi
  \!GRnext}
\def\!GRccheckforAP{%
  \!ifnextchar{p}{\!cgetanchor }
    {\!cgetanchor point at {\!xshadesave} {\!yshadesave} }}
\def\!cgetanchor point at #1 #2 {%
  \edef\!xshadesave{#1}\edef\!yshadesave{#2}%
  \!xshade=\!xshadesave\!xunit  \!yshade=\!yshadesave\!yunit
  \ignorespaces}
\def\!GRdcheckforAP{%
  \!ifnextchar{p}{\!dgetanchor}%
    {\ignorespaces}}
\def\!dgetanchor point at #1 #2 {%
  \!xshade=#1\relax  \!yshade=#2\relax
  \ignorespaces}

% **  \setshadesymbol  [<LS, RS, BS, TS>] ({SHADESYMBOL}
% **    <XDIMEN,YDIMEN> [ORIENTATION])
% **  Saves SHADESYMBOL away in an hbox for use with shading routines.
% **  A shade symbol will not be plotted if its plot position comes within
% **    distance LS of the left boundary,  RS of the right boundary,  TS of the
% **    top boundary,  BS of the bottom boundary.  These parameters have 
% **    default values that should work in most cases (see below).
% **    To override a default value, specify the replacement value
% **    in the appropriate subfield of the shrinkages field.
% **    0pt may be coded as  "z" (without the quotes).  To accept a
% **    default value, leave the field empty.  Thus
% **      [,z,,5pt]  sets  LS=default, RS=0pt, BS=default, TS=5pt .
% **    Skipping the shrinkages field accepts all the defaults.
% **  See Subsection 7.1 of the manual.
\def\setshadesymbol{%
  \!ifnextchar<{\!setshadesymbol}{\!setshadesymbol<,,,> }}

\def\!setshadesymbol <#1,#2,#3,#4> (#5#6){%
% **  set the shadesymbol
  \!setputobject{#5}{#6}%                        
  \setbox\!shadesymbol=\box\!putobject%
  \!shadesymbolxshift=\!xshift \!shadesymbolyshift=\!yshift
%
% **  set the shrinkages
  \!dimenA=\!xshift \advance\!dimenA \!smidge% ** default LS = xshift - smidge
  \!override\!dimenA{#1}\!lshrinkage%         
  \!dimenA=\!wd \advance \!dimenA -\!xshift%   ** default RS = width - xshift
    \advance\!dimenA \!smidge%                                  - smidge
    \!override\!dimenA{#2}\!rshrinkage
  \!dimenA=\!dp \advance \!dimenA \!yshift%    ** default BS = depth + yshift
    \advance\!dimenA \!smidge%                                  - smidge
    \!override\!dimenA{#3}\!bshrinkage
  \!dimenA=\!ht \advance \!dimenA -\!yshift%   ** default TS = height - yshift
    \advance\!dimenA \!smidge%                                  - smidge
    \!override\!dimenA{#4}\!tshrinkage
  \ignorespaces}
\def\!smidge{-.2pt}%

% ** \!override{NOMINAL DIMEN}{REPLACEMENT DIMEN}{DIMEN}
% ** Overrides the NOMINAL DIMEN by the REPLACEMENT DIMEN to produce DIMEN,
% ** according to the following rules:
% **   REPLACEMENT DIMEN empty: DIMEN <-- NOMINAL DIMEN
% **   REPLACEMENT DIMEN z:     DIMEN <-- 0pt
% **   otherwise:               DIMEN <-- REPLACEMENT DIMEN
% ** DIMEN must be a dimension register
\def\!override#1#2#3{%
  \edef\!!override{#2}% 
  \ifx \!!override\empty
    #3=#1\relax
  \else
    \if z\!!override
      #3=\!zpt
    \else
      \ifx \!!override\!blankz
        #3=\!zpt
      \else
        #3=#2\relax
      \fi
    \fi
  \fi
  \ignorespaces}
\def\!blankz{ z}

\setshadesymbol ({\fiverm .})%       ** initialize plotsymbol
%                                    ** \fivesy ^^B  is a small cross

% ** \!startvshade [at] (xS,ybS,ytS)
% ** Initiates vertical shading mode
\def\!startvshade#1(#2,#3,#4){%
  \let\!!xunit=\!xunit%
  \let\!!yunit=\!yunit%
  \let\!!xshade=\!xshade%
  \let\!!yshade=\!yshade%
  \def\!getshrinkages{\!vgetshrinkages}%
  \let\!setshadelocation=\!vsetshadelocation%
  \!xS=\!M{#2}\!!xunit
  \!ybS=\!M{#3}\!!yunit
  \!ytS=\!M{#4}\!!yunit
  \!shadexorigin=\!xorigin  \advance \!shadexorigin \!shadesymbolxshift
  \!shadeyorigin=\!yorigin  \advance \!shadeyorigin \!shadesymbolyshift
  \ignorespaces}
 
% ** \!starthshade [at] (yS,xlS,xrS)
% ** Initiates horizontal shading mode
\def\!starthshade#1(#2,#3,#4){%
  \let\!!xunit=\!yunit%
  \let\!!yunit=\!xunit%
  \let\!!xshade=\!yshade%
  \let\!!yshade=\!xshade%
  \def\!getshrinkages{\!hgetshrinkages}%
  \let\!setshadelocation=\!hsetshadelocation%
  \!xS=\!M{#2}\!!xunit
  \!ybS=\!M{#3}\!!yunit
  \!ytS=\!M{#4}\!!yunit
  \!shadexorigin=\!xorigin  \advance \!shadexorigin \!shadesymbolxshift
  \!shadeyorigin=\!yorigin  \advance \!shadeyorigin \!shadesymbolyshift
  \ignorespaces}

% **  \!lattice{ANCHOR}{SPAN}{LOCATION}{INDEX}{LATTICE LOCATION}
% **  Consider the lattice with points  ANCHOR + j*SPAN. This routine determines
% **  the index  k  of the smallest lattice point >= LOCATION, and sets
% **  LATTICE LOCATION = ANCHOR + k*SPAN.
% **  INDEX is assumed to be a count register, LATTICE LOCATION a dimen reg.
\def\!lattice#1#2#3#4#5{%
  \!dimenA=#1%                        ** dimA = ANCHOR
  \!dimenB=#2%                        ** dimB = SPAN  (assumed > 0pt)
  \!countB=\!dimenB%                  ** ctB  = SPAN, as a count
%
% ** Determine index of smallest lattice point >= LOCATION
  \!dimenC=#3%                        ** dimC = LOCATION
  \advance\!dimenC -\!dimenA%         ** now dimC = LOCATION-ANCHOR
  \!countA=\!dimenC%                  ** ctA = above, as a count
  \divide\!countA \!countB%           ** now ctA = desired index, if dimC <= 0
  \ifdim\!dimenC>\!zpt
    \!dimenD=\!countA\!dimenB%        ** (tentative k)*span
    \ifdim\!dimenD<\!dimenC%          ** if this is false, ctA = desired index
      \advance\!countA 1 %            ** if true, have to add 1
    \fi
  \fi
  \!dimenC=\!countA\!dimenB%          ** lattice location = anchor + ctA*span
    \advance\!dimenC \!dimenA
  #4=\!countA%                        ** the desired index
  #5=\!dimenC%                        ** corresponding lattice location
  \ignorespaces}

% ** \!qshade [with shrinkages] [[LS,RS,BS,TS]]
% ***** during vertical shading:
% **    [the region from (xS,ybS,ytS) to] (xM,ybM,ytM) [and] (xE,ybE,ytE)
% ** Shades the region {(x,y): xS <= x <= xE, yb(x) <= y <= yt(x)}, where 
% **   yb is the quadratic thru (xS,ybS) & (xM,ybM) & (xE,ybE)
% **   yt is the quadratic thru (xS,ytS) & (xM,ybM) & (xE,ytE)
% ** xS,ybS,ytS are either given by \!startvshade or carried over
% **   as the ending values of the immediately preceding \!qshade.
% ** For the interpretation of LS, RS, BS, & TS, see \setshadesymbol. The
% **   values set there can be overridden, for the course of this \!qshade
% **   only, in the same manner as overrides are specified for
% **   \setshadesymbol.
% ***** during horizontal shading:
% **    [the region from (yS,xlS,xrS) to] (yM,xlM,xrM) [and] (yE,xlE,xrE)
\def\!qshade#1(#2,#3,#4)#5(#6,#7,#8){%
  \!xM=\!M{#2}\!!xunit
  \!ybM=\!M{#3}\!!yunit
  \!ytM=\!M{#4}\!!yunit
  \!xE=\!M{#6}\!!xunit
  \!ybE=\!M{#7}\!!yunit
  \!ytE=\!M{#8}\!!yunit
  \!getcoeffs\!xS\!ybS\!xM\!ybM\!xE\!ybE\!ybB\!ybC%**Get coefficients B & C for
  \!getcoeffs\!xS\!ytS\!xM\!ytM\!xE\!ytE\!ytB\!ytC%**y=y0 + B(x-X0) + C(x-X0)**2
  \def\!getylimits{\!qgetylimits}%
  \!shade{#1}\ignorespaces}
 
% ** \!lshade ... (xE,ybE,ytE)
% ** This is like \!qshade, but the top and bottom boundaries are linear,
% ** rather than quadratic.
\def\!lshade#1(#2,#3,#4){%
  \!xE=\!M{#2}\!!xunit
  \!ybE=\!M{#3}\!!yunit
  \!ytE=\!M{#4}\!!yunit
  \!dimenE=\!xE  \advance \!dimenE -\!xS%   ** xE-xS
  \!dimenC=\!ytE \advance \!dimenC -\!ytS%  ** ytE-ytS
  \!divide\!dimenC\!dimenE\!ytB%            ** ytB = (ytE-ytS)/(xE-xS)
  \!dimenC=\!ybE \advance \!dimenC -\!ybS%  ** ybE-ybS
  \!divide\!dimenC\!dimenE\!ybB%            ** ybB = (ybE-ybS)/(xE-xS)
  \def\!getylimits{\!lgetylimits}%
  \!shade{#1}\ignorespaces}
 
% **  \!getcoeffs{X0}{Y0}{X1}{Y1}{X2}{Y2}{B}{C}
% **  Finds  B  and  C  such that the quadratic  y = Y0 + B(x-X0) + C(x-X0)**2
% **  passes through (X1,Y1) and (X2,Y2):  when X0=0=Y0, the formulas are:
% **                   B = S1 - X1*C,   C = (S2-S1)/X2
% **  with
% **                 S1 = Y1/X1,   S2 = (Y2-Y1)/(X2-X1).
\def\!getcoeffs#1#2#3#4#5#6#7#8{% 
  \!dimenC=#4\advance \!dimenC -#2%            ** dimC=Y1-Y0
  \!dimenE=#3\advance \!dimenE -#1%            ** dimE=X1-X0
  \!divide\!dimenC\!dimenE\!dimenF%            ** dimF=S1
  \!dimenC=#6\advance \!dimenC -#4%            ** dimC=Y2-Y1
  \!dimenH=#5\advance \!dimenH -#3%            ** dimH=X2-X1
  \!divide\!dimenC\!dimenH\!dimenG%            ** dimG=S2
  \advance\!dimenG -\!dimenF%                  ** dimG=S2-S1
  \advance \!dimenH \!dimenE%                  ** dimH=X2-X0
  \!divide\!dimenG\!dimenH#8%                  ** C=(S2-S1)/(X2-X0)
  \!removept#8\!t%                             ** C, without "pt"
  #7=-\!t\!dimenE%                             ** -C*(X1-X0)
  \advance #7\!dimenF%                         ** B=S1-C*(X1-X0)
  \ignorespaces}

\def\!shade#1{%
% ** Get LS,RS,BS,TS for this panel
  \!getshrinkages#1<,,,>\!nil% %       ** now effective LS=dimE, RS=dimF,
%                                      **   BS=dimG, TS=dimH
  \advance \!dimenE \!xS%              ** now dimE=xS+LS
  \!lattice\!!xshade\!dshade\!dimenE%  ** set parity=index of left-mst x-lattice
    \!parity\!xpos%                    **   point >= xS+LS, xpos=its location
  \!dimenF=-\!dimenF%                  ** set dimF=xE-RS
    \advance\!dimenF \!xE
  \!loop\!not{\ifdim\!xpos>\!dimenF}%  ** loop over x-lattice points <= xE-RS
    \!shadecolumn%                 
    \advance\!xpos \!dshade%           ** move over to next column
    \advance\!parity 1%                ** increase index of x-point
  \repeat
  \!xS=\!xE%                           ** shift ending values to starting values
  \!ybS=\!ybE
  \!ytS=\!ytE
  \ignorespaces}

\def\!vgetshrinkages#1<#2,#3,#4,#5>#6\!nil{%
  \!override\!lshrinkage{#2}\!dimenE
  \!override\!rshrinkage{#3}\!dimenF
  \!override\!bshrinkage{#4}\!dimenG
  \!override\!tshrinkage{#5}\!dimenH
  \ignorespaces}
\def\!hgetshrinkages#1<#2,#3,#4,#5>#6\!nil{%
  \!override\!lshrinkage{#2}\!dimenG
  \!override\!rshrinkage{#3}\!dimenH
  \!override\!bshrinkage{#4}\!dimenE
  \!override\!tshrinkage{#5}\!dimenF
  \ignorespaces}

\def\!shadecolumn{%
  \!dxpos=\!xpos
  \advance\!dxpos -\!xS%            ** dx = x - xS
  \!removept\!dxpos\!dx%            ** ditto, without "pt"
  \!getylimits%                     ** get top and bottom y-values
  \advance\!ytpos -\!dimenH%        ** less TS
  \advance\!ybpos \!dimenG%         ** plus BS
  \!yloc=\!!yshade%                 ** get anchor point for this column
  \ifodd\!parity 
     \advance\!yloc \!dshade
  \fi
  \!lattice\!yloc{2\!dshade}\!ybpos%
    \!countA\!ypos%                 ** ypos=smallest y point for this column
  \!dimenA=-\!shadexorigin \advance \!dimenA \!xpos%      ** over
  \loop\!not{\ifdim\!ypos>\!ytpos}% ** loop over ypos <= yt(t)
    \!setshadelocation%             ** vmode: xloc=xpos, yloc=ypos 
%                                   ** hmode: xloc=ypos, yloc=xpos 
    \!rotateaboutpivot\!xloc\!yloc%
    \!dimenA=-\!shadexorigin \advance \!dimenA \!xloc%    ** over
    \!dimenB=-\!shadeyorigin \advance \!dimenB \!yloc%    ** up
    \kern\!dimenA \raise\!dimenB\copy\!shadesymbol \kern-\!dimenA
    \advance\!ypos 2\!dshade
  \repeat
  \ignorespaces}
 
\def\!qgetylimits{%
  \!dimenA=\!dx\!ytC              
  \advance\!dimenA \!ytB%         ** yt(t)=ytS + dx*(Bt + dx*Ct)
  \!ytpos=\!dx\!dimenA
  \advance\!ytpos \!ytS
  \!dimenA=\!dx\!ybC              
  \advance\!dimenA \!ybB%         ** yb(t)=ybS + dx*(Bb + dx*Cb)
  \!ybpos=\!dx\!dimenA
  \advance\!ybpos \!ybS}
 
\def\!lgetylimits{%
  \!ytpos=\!dx\!ytB%              ** yt(t)=ytS + dx*Bt
  \advance\!ytpos \!ytS
  \!ybpos=\!dx\!ybB%              ** yb(t)=ybS + dx*Bb
  \advance\!ybpos \!ybS}
 
\def\!vsetshadelocation{%         ** vmode: xloc=xpos, yloc=ypos 
  \!xloc=\!xpos
  \!yloc=\!ypos}
\def\!hsetshadelocation{%         ** hmode: xloc=ypos, yloc=xpos 
  \!xloc=\!ypos
  \!yloc=\!xpos}

% **************************************
% *** TICKS  (Draws ticks on graphs) ***
% **************************************

% ** User commands
% **   \ticksout
% **   \ticksin
% **   \gridlines
% **   \nogridlines
% **   \loggedticks
% **   \unloggesticks
% ** See Subsection 3.4 of the manual

% ** The following is an option of the \axis command
% **   ticks 
% **     [in] [out] 
% **     [long] [short] [length <LENGTH>] 
% **     [width <WIDTH>]
% **     [andacross] [butnotacross] 
% **     [logged] [unlogged] 
% **     [unlabeled] [numbered] [withvalues VALUE1 VALUE2 ... VALUEk / ]
% **     [quantity Q] [at LOC1 LOC2 ... LOCk / ] [from LOC1 to LOC2 by
% **       LOC_INCREMENT]
% ** See Subsection 3.2 of the manual for the rules.

% ** The various options of the  tick  field are processed by the
% ** \!nextkeyword  command defined below.
% ** For example, `\!nextkeyword short '  expands to  `\!ticksshort',
% ** while `\!nextkeyword withvalues' expands to `\!tickswithvalues'.

\def\!axisticks {%
  \def\!nextkeyword##1 {%
    \expandafter\ifx\csname !ticks##1\endcsname \relax
      \def\!next{\!fixkeyword{##1}}%
    \else
      \def\!next{\csname !ticks##1\endcsname}%
    \fi
    \!next}%
  \!axissetup
    \def\!axissetup{\relax}%
  \edef\!ticksinoutsign{\!ticksinoutSign}%
  \!ticklength=\longticklength
  \!tickwidth=\linethickness
  \!gridlinestatus
  \!setticktransform
  \!maketick
  \!tickcase=0
  \def\!LTlist{}%
  \!nextkeyword}

\def\ticksout{%
  \def\!ticksinoutSign{+}}

\ticksout

\def\nogridlines{%
  \def\!gridlinestatus{\!gridlinestoofalse}}
\nogridlines

\def\loggedticks{%
  \def\!setticktransform{\let\!ticktransform=\!logten}}
\def\unloggedticks{%
  \def\!setticktransform{\let\!ticktransform=\!donothing}}
\def\!donothing#1#2{\def#2{#1}}
\unloggedticks

% ** \!ticks/ : terminates read of tick options
\expandafter\def\csname !ticks/\endcsname{%
  \!not {\ifx \!LTlist\empty}
    \!placetickvalues
  \fi
  \def\!tickvalueslist{}%
  \def\!LTlist{}%
  \expandafter\csname !axis/\endcsname}

\def\!maketick{%
  \setbox\!boxA=\hbox{%
    \beginpicture
      \!setdimenmode
      \setcoordinatesystem point at {\!zpt} {\!zpt}   
      \linethickness=\!tickwidth
      \ifdim\!ticklength>\!zpt
        \putrule from {\!zpt} {\!zpt} to
          {\!ticksinoutsign\!tickxsign\!ticklength}
          {\!ticksinoutsign\!tickysign\!ticklength}
      \fi
      \if!gridlinestoo
        \putrule from {\!zpt} {\!zpt} to
          {-\!tickxsign\!xaxislength} {-\!tickysign\!yaxislength}
      \fi
    \endpicturesave <\!Xsave,\!Ysave>}%
    \wd\!boxA=\!zpt}
  
\def\!ticksin{%
  \def\!ticksinoutsign{-}%
  \!maketick
  \!nextkeyword}

\def\!ticksout{%
  \def\!ticksinoutsign{+}%
  \!maketick
  \!nextkeyword}

\def\!tickslength<#1> {%
  \!ticklength=#1\relax
  \!maketick
  \!nextkeyword}

\def\!tickslong{%
  \!tickslength<\longticklength> }

\def\!ticksshort{%
  \!tickslength<\shortticklength> }

\def\!tickswidth<#1> {%
  \!tickwidth=#1\relax
  \!maketick
  \!nextkeyword}

\def\!ticksandacross{%
  \!gridlinestootrue
  \!maketick
  \!nextkeyword}

\def\!ticksbutnotacross{%
  \!gridlinestoofalse
  \!maketick
  \!nextkeyword}

\def\!tickslogged{%
  \let\!ticktransform=\!logten
  \!nextkeyword}

\def\!ticksunlogged{%
  \let\!ticktransform=\!donothing
  \!nextkeyword}

\def\!ticksunlabeled{%
  \!tickcase=0
  \!nextkeyword}

\def\!ticksnumbered{%
  \!tickcase=1
  \!nextkeyword}

\def\!tickswithvalues#1/ {%
  \edef\!tickvalueslist{#1! /}%
  \!tickcase=2
  \!nextkeyword}

\def\!ticksquantity#1 {%
  \ifnum #1>1
    \!updatetickoffset
    \!countA=#1\relax
    \advance \!countA -1
    \!ticklocationincr=\!axisLength
      \divide \!ticklocationincr \!countA
    \!ticklocation=\!axisstart
    \loop \!not{\ifdim \!ticklocation>\!axisend}
      \!placetick\!ticklocation
      \ifcase\!tickcase
          \relax %  Case 0: no labels
        \or
          \relax %  Case 1: numbered -- not available here
        \or
          \expandafter\!gettickvaluefrom\!tickvalueslist
          \edef\!tickfield{{\the\!ticklocation}{\!value}}%
          \expandafter\!listaddon\expandafter{\!tickfield}\!LTlist%
      \fi
      \advance \!ticklocation \!ticklocationincr
    \repeat
  \fi
  \!nextkeyword}

\def\!ticksat#1 {%
  \!updatetickoffset
  \edef\!Loc{#1}%
  \if /\!Loc
    \def\next{\!nextkeyword}%
  \else
    \!ticksincommon
    \def\next{\!ticksat}%
  \fi
  \next}    
      
\def\!ticksfrom#1 to #2 by #3 {%
  \!updatetickoffset
  \edef\!arg{#3}%
  \expandafter\!separate\!arg\!nil
  \!scalefactor=1
  \expandafter\!countfigures\!arg/
  \edef\!arg{#1}%
  \!scaleup\!arg by\!scalefactor to\!countE
  \edef\!arg{#2}%
  \!scaleup\!arg by\!scalefactor to\!countF
  \edef\!arg{#3}%
  \!scaleup\!arg by\!scalefactor to\!countG
  \loop \!not{\ifnum\!countE>\!countF}
    \ifnum\!scalefactor=1
      \edef\!Loc{\the\!countE}%
    \else
      \!scaledown\!countE by\!scalefactor to\!Loc
    \fi
    \!ticksincommon
    \advance \!countE \!countG
  \repeat
  \!nextkeyword}

\def\!updatetickoffset{%
  \!dimenA=\!ticksinoutsign\!ticklength
  \ifdim \!dimenA>\!offset
    \!offset=\!dimenA
  \fi}

\def\!placetick#1{%
  \if!xswitch
    \!xpos=#1\relax
    \!ypos=\!axisylevel
  \else
    \!xpos=\!axisxlevel
    \!ypos=#1\relax
  \fi
  \advance\!xpos \!Xsave
  \advance\!ypos \!Ysave
  \kern\!xpos\raise\!ypos\copy\!boxA\kern-\!xpos
  \ignorespaces}

\def\!gettickvaluefrom#1 #2 /{%
  \edef\!value{#1}%
  \edef\!tickvalueslist{#2 /}%
  \ifx \!tickvalueslist\!endtickvaluelist
    \!tickcase=0
  \fi}
\def\!endtickvaluelist{! /}

\def\!ticksincommon{%
  \!ticktransform\!Loc\!t
  \!ticklocation=\!t\!!unit
  \advance\!ticklocation -\!!origin
  \!placetick\!ticklocation
  \ifcase\!tickcase
    \relax % Case 0: no labels
  \or %      Case 1: numbered
    \ifdim\!ticklocation<-\!!origin
      \edef\!Loc{$\!Loc$}%
    \fi
    \edef\!tickfield{{\the\!ticklocation}{\!Loc}}%
    \expandafter\!listaddon\expandafter{\!tickfield}\!LTlist%
  \or %      Case 2: labeled
    \expandafter\!gettickvaluefrom\!tickvalueslist
    \edef\!tickfield{{\the\!ticklocation}{\!value}}%
    \expandafter\!listaddon\expandafter{\!tickfield}\!LTlist%
  \fi}

\def\!separate#1\!nil{%
  \!ifnextchar{-}{\!!separate}{\!!!separate}#1\!nil}
\def\!!separate-#1\!nil{%
  \def\!sign{-}%
  \!!!!separate#1..\!nil}
\def\!!!separate#1\!nil{%
  \def\!sign{+}%
  \!!!!separate#1..\!nil}
\def\!!!!separate#1.#2.#3\!nil{%
  \def\!arg{#1}%
  \ifx\!arg\!empty
    \!countA=0
  \else
    \!countA=\!arg
  \fi
  \def\!arg{#2}%
  \ifx\!arg\!empty
    \!countB=0
  \else
    \!countB=\!arg
  \fi}
 
\def\!countfigures#1{%
  \if #1/%
    \def\!next{\ignorespaces}%
  \else
    \multiply\!scalefactor 10
    \def\!next{\!countfigures}%
  \fi
  \!next}

\def\!scaleup#1by#2to#3{%
  \expandafter\!separate#1\!nil
  \multiply\!countA #2\relax
  \advance\!countA \!countB
  \if -\!sign
    \!countA=-\!countA
  \fi
  #3=\!countA
  \ignorespaces}

\def\!scaledown#1by#2to#3{%
  \!countA=#1\relax%                          ** get original #
  \ifnum \!countA<0 %                         ** take abs value,
    \def\!sign{-}%                            **   remember sign
    \!countA=-\!countA
  \else
    \def\!sign{}%
  \fi
  \!countB=\!countA%                          ** copy |#|
  \divide\!countB #2\relax%                   ** integer part (|#|/sf)
  \!countC=\!countB%                          ** get sf * (|#|/sf)
    \multiply\!countC #2\relax
  \advance \!countA -\!countC%                ** ctA is now remainder
  \edef#3{\!sign\the\!countB.}%               ** +- integerpart.
  \!countC=\!countA %                         ** Tack on proper number
  \ifnum\!countC=0 %                          **   of zeros after .
    \!countC=1
  \fi
  \multiply\!countC 10
  \!loop \ifnum #2>\!countC
    \edef#3{#3\!zero}%
    \multiply\!countC 10
  \repeat
  \edef#3{#3\the\!countA}%                    ** Add on rest of remainder
  \ignorespaces}

\def\!placetickvalues{%
  \advance\!offset \tickstovaluesleading
  \if!xswitch
    \setbox\!boxA=\hbox{%
      \def\\##1##2{%
        \!dimenput {##2} [B] (##1,\!axisylevel)}%
      \beginpicture 
        \!LTlist
      \endpicturesave <\!Xsave,\!Ysave>}%
    \!dimenA=\!axisylevel
      \advance\!dimenA -\!Ysave
      \advance\!dimenA \!tickysign\!offset
      \if -\!tickysign
        \advance\!dimenA -\ht\!boxA
      \else
        \advance\!dimenA  \dp\!boxA
      \fi
    \advance\!offset \ht\!boxA 
      \advance\!offset \dp\!boxA
    \!dimenput {\box\!boxA} [Bl] <\!Xsave,\!Ysave> (\!zpt,\!dimenA)
  \else
    \setbox\!boxA=\hbox{%
      \def\\##1##2{%
        \!dimenput {##2} [r] (\!axisxlevel,##1)}%
      \beginpicture 
        \!LTlist
      \endpicturesave <\!Xsave,\!Ysave>}%
    \!dimenA=\!axisxlevel
      \advance\!dimenA -\!Xsave
      \advance\!dimenA \!tickxsign\!offset
      \if -\!tickxsign
        \advance\!dimenA -\wd\!boxA
      \fi
    \advance\!offset \wd\!boxA
    \!dimenput {\box\!boxA} [Bl] <\!Xsave,\!Ysave> (\!dimenA,\!zpt)
  \fi}

\normalgraphs
\catcode`!=12 %  *****  THIS MUST NEVER BE OMITTED

% ********************** START OF POSTPICTEX.TEX ********************
% This is postpictex.tex  Version 1.1  9/10/87. See section 10 of the manual.

% To use the PiCTeX macros under LaTeX, you first need to \input the
% file prepictex.tex, then the main corpus of PiCTeX macros (pictex.tex), 
% and finally this file.  Do not \input the file latexpicobjs.tex.
 
\catcode`@=11 \catcode`!=11
  
% Save meanings of PiCTeX keywords that duplicate LaTeX keywords
\let\!pictexendpicture=\endpicture 
\let\!pictexframe=\frame
\let\!pictexlinethickness=\linethickness
\let\!pictexmultiput=\multiput
\let\!pictexput=\put

% Redefine the PiCTeX \beginpicture macro
\def\beginpicture{%
  \setbox\!picbox=\hbox\bgroup%
  \let\endpicture=\!pictexendpicture
  \let\frame=\!pictexframe
  \let\linethickness=\!pictexlinethickness
  \let\multiput=\!pictexmultiput
  \let\put=\!pictexput
  \let\input=\@@input   % \@@input is LaTeX's saved version of TeX's primitive
  \!xleft=\maxdimen  
  \!xright=-\maxdimen
  \!ybot=\maxdimen
  \!ytop=-\maxdimen}

% Reestablish LaTeX's meaning of \frame. This makes
% PiCTeX's meaning of \frame available only inside a PiCture.
\let\frame=\!latexframe

% Make PiCTeX's meaning of \frame available everywhere in the
% guise of \pictexframe
\let\pictexframe=\!pictexframe

% Now do the same for \linethickness
\let\linethickness=\!latexlinethickness
\let\pictexlinethickness=\!pictexlinethickness

% Reset LaTeX's default meaning of \\
\let\\=\@normalcr
\catcode`@=12 \catcode`!=12
% ********************** END OF POSTPICTEX.TEX ********************

\newtheorem{thm}{Theorem}[section]
\newtheorem{lem}[thm]{Lemma}
\newtheorem{cor}[thm]{Corollary}

\numberwithin{equation}{section}

\theoremstyle{definition}

\begin{document}

\title{Finite Modules over $\Bbb Z[t,t^{-1}]$}

\author{Xiang-dong Hou}
\address{Department of Mathematics  and Statistics,
University of South Florida, Tampa, FL 33620}
\email{xhou@usf.edu}

\keywords{Alexander quandle, finite module, knot, group ring}

\subjclass[2000]{16S34, 20K01, 57M27}
 
\begin{abstract}
Let $\Lambda=\Bbb Z[t,t^{-1}]$ be the ring of Laurent polynomials over $\Bbb Z$. We classify all $\Lambda$-modules $M$ with $|M|=p^n$, where $p$ is a primes and $n\le 4$. Consequently, we have a classification of Alexander quandles of order $p^n$ for $n\le 4$. 
\end{abstract}

\maketitle

%%%%%%%%%%%%%%%%%%%%%%%%%%%%%%%%%%%%%%%%%%%%%%%%%%%%%%%%%%%%%%%%%%%%%%%%%
%             section 1 
%%%%%%%%%%%%%%%%%%%%%%%%%%%%%%%%%%%%%%%%%%%%%%%%%%%%%%%%%%%%%%%%%%%%%%%%%
\section{Introduction}

Let $\Lambda=\Bbb Z[t,t^{-1}]$ be the ring of Laurent polynomials over $\Bbb Z$, which is also the group ring over $\Bbb Z$ of the infinite cyclic group. Each $\Lambda$-module is uniquely determined by a pair $(M,\alpha)$, where $M$ is an abelian group and $\alpha\in\text{Aut}_\Bbb Z(M)$. The resulting $\Lambda$-module, denoted by $M_\alpha$, is $M$ with a scalar multiplication defined by $tx=\alpha(x)$, $x\in M$.
If two $\Lambda$-modules $M_\alpha$ and $N_\beta$ are isomorphic, where $\alpha\in \text{Aut}_\Bbb Z(M)$ and $\beta\in \text{Aut}_\Bbb Z(N)$, then $M\cong N$ as abelian groups. Moreover, for $\alpha,\beta\in\text{Aut}_\Bbb Z(M)$, $M_\alpha\cong M_\beta$ if and only if $\alpha$ and $\beta$ are conjugate in 
$\text{Aut}_\Bbb Z(M)$. Thus, to classify $\Lambda$-modules with an underlying abelian group $M$ is to determine the conjugacy classes of $\text{Aut}_\Bbb Z(M)$. 

Our interest in finite $\Lambda$-modules comes from topology. In knot theory, a {\em quandle} is defined to be a set of $Q$ equipped with a binary operation $*$ such that for all $x,y,z\in Q$,
\begin{itemize}
  \item [(i)] $x*x=x$,
  \item [(ii)] $(\ )*y$ is a permutation of $Q$,
  \item [(iii)] $(x*y)*z=(x*z)*(y*z)$.
\end{itemize}  
Finite quandles are used to color knots; the number of colorings of a knot $K$ by a finite quandle $Q$ is an invariant of $K$ which allows us to distinguish inequivalent knots effectively \cite{CKS, CEHSY}. 

An {\em Alexander quandle} is a $\Lambda$-module $M$ with a quandle operation defined by $x*y=tx+(1-t)y$, $x,y\in M$. The following theorem is of fundamental importance.

\begin{thm}[\cite{Nel03}] \label{T1.1}
Two finite Alexander quandles $M$ and $N$ are isomorphic if and only if $|M|=|N|$ and the $\Lambda$-modules $(1-t)M$ and $(1-t)N$ are isomorphic.
\end{thm}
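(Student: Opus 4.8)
The plan is to analyze quandle isomorphisms of Alexander quandles through the submodule $(1-t)M$, which I abbreviate $TM$ (so $T=1-t$), exploiting the identity
\[
x*y=tx+(1-t)y=x+(1-t)(y-x),
\]
which shows $x*y\equiv x\pmod{TM}$. Hence the right translations $R_y\colon x\mapsto x*y$ preserve every coset of $TM$, and a short check ($R_yR_0^{-1}=\tau_{(1-t)y}$, see below) shows these cosets are exactly the connected components (orbits under the inner automorphism group) of the quandle. I first record two elementary facts: every translation $\tau_c\colon x\mapsto x+c$ is a quandle automorphism (direct verification), so after composing with $\tau_{-f(0)}$ we may assume any quandle isomorphism $f$ satisfies $f(0)=0$; and such an $f$ then commutes with multiplication by $t$ and by $1-t$, since $f(tx)=tf(x)$ and $f((1-t)y)=(1-t)f(y)$ follow by setting $y=0$ and $x=0$ respectively in the homomorphism condition.

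For the forward implication, cardinality equality is immediate, so the content is recovering the $\Lambda$-module $TM$ intrinsically. I would pass to the transvection (displacement) group generated by the $R_yR_z^{-1}$. A direct computation gives $R_yR_z^{-1}=\tau_{(1-t)(y-z)}$, so this group is $\{\tau_w:w\in TM\}\cong(TM,+)$; moreover conjugation by any right translation $R_a$ sends $\tau_w\mapsto\tau_{tw}$, i.e.\ acts as multiplication by $t$, independently of the basepoint $a$. Since a quandle isomorphism $f$ satisfies $fR_y=R_{f(y)}f$, it carries the transvection group of $M$ isomorphically onto that of $N$ and intertwines the two conjugation actions; this yields precisely a $\Lambda$-module isomorphism $TM\cong TN$.

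For the converse, the task is to build a quandle isomorphism from a $\Lambda$-module isomorphism $\phi\colon TM\to TN$ together with $|M|=|N|$, which forces an equal number $k=|M/TM|=|N/TN|$ of components. Fixing coset representatives $a_i$ of $TM$ in $M$ and $b_j$ of $TN$ in $N$, each component is a copy of the Alexander quandle of $TM$ (resp.\ $TN$), and the only remaining data are the gluing shifts $c_i=(1-t)a_i\in TM$, through which $R_y$ acts across components. I would seek $f$ of the form $f(a_i+z)=b_{\sigma(i)}+\phi(z)+r_i$ with $\sigma\in S_k$ and $r_i\in TN$; expanding the homomorphism condition collapses it, after grouping the $i$- and $j$-terms, to solving $(1-t)r_i=\phi(c_i)-c^N_{\sigma(i)}-u$ for a single common $u\in TN$. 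This is solvable exactly when the multisets $\{\phi(c_i)\}$ and $\{c^N_j\}$ agree modulo $T^2N:=(1-t)TN$ up to a translation.

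The main obstacle is precisely this last matching of gluing data, and the observation that resolves it is that the map $M/TM\to TM/T^2M$ induced by multiplication by $1-t$ is \emph{surjective} (its image is all of $TM/T^2M$ since $(1-t)m$ ranges over $TM$). Consequently the reductions $\{c_i\bmod T^2M\}$ form a uniform multiset over $TM/T^2M$, each element occurring with multiplicity $k/|TM/T^2M|$; the same holds on the $N$ side with the same multiplicity, because $TM\cong TN$ and $k$ is common to both. The isomorphism $\bar\phi\colon TM/T^2M\to TN/T^2N$ induced by $\phi$ then matches these multisets with $u=0$, so a suitable $\sigma$ exists and each $r_i$ can be solved for within $TN$, completing the construction. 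The subtlety worth emphasizing throughout is that quandle isomorphisms are only affine, never linear, over $\Lambda$; this is exactly why $TM$ rather than $M$ is the correct invariant, and why the basepoint, the transvection group, and the cross-component gluing must each be handled separately.
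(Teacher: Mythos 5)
The paper does not prove Theorem~\ref{T1.1}; it quotes the result from \cite{Nel03}, so there is no in-paper argument to compare against, and your proposal must stand on its own --- which it does. Your converse is essentially Nelson's original construction: an affine-over-$\phi$ map coset by coset, with the quandle-homomorphism condition reducing to matching the gluing data $(1-t)a_i$ against $(1-t)b_j$ modulo $(1-t)^2N$ up to a common translation, and the matching supplied by the observation that $x\mapsto (1-t)x$ induces a surjective homomorphism $M/(1-t)M\to (1-t)M/(1-t)^2M$, hence equal fibers and a uniform multiset on both sides (here you correctly use both hypotheses: $|M|=|N|$ gives equal index $k$, and $(1-t)M\cong(1-t)N$ gives equal $|(1-t)M/(1-t)^2M|$ and the induced $\bar\phi$). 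Your forward direction is packaged differently from the usual one: rather than normalizing $f(0)=0$ and checking directly that $f|_{(1-t)M}$ is additive and $t$-equivariant (which your preliminary identities $f(tx)=tf(x)$, $f((1-t)y)=(1-t)f(y)$ already essentially give, via $f(tx+(1-t)y)=f(tx)+f((1-t)y)$), you extract $(1-t)M$ as the displacement group $\{\tau_w: w\in(1-t)M\}$ with the $t$-action realized by conjugation by any $R_a$; this buys a basepoint-free, manifestly intrinsic description of the invariant at the cost of a little group-theoretic overhead, and both computations ($R_yR_z^{-1}=\tau_{(1-t)(y-z)}$ and $R_a\tau_wR_a^{-1}=\tau_{tw}$) check out. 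The only omission is the routine verification that the constructed map is a bijection (each coset maps bijectively onto its target coset and $\sigma$ permutes the cosets), which is immediate. The argument is correct.
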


Therefore, the classification of finite Alexander quandles is essentially the classification of finite $\Lambda$-modules.

The classification of finite $\Lambda$-modules can be reduced to that of $\Lambda$-modules of order $p^n$, where $p$ is a prime; see section 2. The same is true for the classification of finite Alexander quandles; see section 4. Finite Alexander quandles have been classified for orders up to 15 in \cite{Nel03} and for order 16 in \cite{Mur-Nel08, Mur-Nel09}. Also known is the classification of connected Alexander quandles of order $p^2$ \cite{Gra04, Nel03}. (A finite Alexander quandle is called {\em connected} if $1-t\in\text{Aut}_\Bbb Z(M)$.) The purpose of the present paper is to classify all $\Lambda$-modules and Alexander quandles of order $p^n$, $n\le 4$. The details of the classification are given in Table 1 in the appendix. For a snapshot, there are $5p^4-2p^3-2p-1$ nonisomorphic $\Lambda$-modules of order $p^4$ and there are $5p^4-6p^3+p^2-6p-1$ nonisomorphic Alexander quandles of order $p^4$.  

In section 2, we show that every finite $\Lambda$-module has a unique decomposition where each direct summand $M$ has the following properties:
\begin{itemize}
  \item [(i)] $|M|=p^n$ for some prime $p$ and integer $n>0$.
  \item [(ii)] When treating $t$ as an element of $\text{End}_{\Bbb Z_p}(M/pM)$, the minimal polynomial of $t$ is a power of some irreducible $f\in\Bbb Z_p[X]$.
\end{itemize}  
In section 3, we classify such $\Lambda$-modules $M$ of order $p^n$ with $n\le 4$. In section 4, we derive the classification of Alexander quandles of order $p^n$, $n\le 4$, from the results of section 3. For this purpose, we prove the following fact which is of interest in its own right: Given a finite $\Lambda$-module $N$ and an integer $l>0$, a necessary and sufficient condition for the existence of a $\Lambda$-module $M\supset N$ such that $(1-t)M=N$ and $|M/N|=l$ is $|N/(1-t)N|\bigm | l$. 

In our notation, the letter $t$ is reserved for the element $t\in\Lambda=\Bbb Z[t,t^{-1}]$. The group of units of a ring $R$ is denoted by $R^\times$; the set of all $m\times n$ matrices over $R$ is denoted by ${\text M}_{m\times n}(R)$.

%%%%%%%%%%%%%%%%%%%%%%%%%%%%%%%%%%%%%%%%
%       section 2
%%%%%%%%%%%%%%%%%%%%%%%%%%%%%%%%%%%%%%%%

\section{Decomposition of Finite $\Lambda$-Modules}

Let $M$ be a finite $\Lambda$-module. For each prime $p$, let 
\[
M_p=\{x\in M: p^nx=0\ \text{for some}\ n\ge 0\}.
\]
It is quite obvious that
\begin{equation}\label{2.0}
M=\bigoplus_p M_p.
\end{equation}
Moreover, two finite $\Lambda$-modules $M$ and $N$ are isomorphic if and only if $M_p\cong N_p$ for all primes $p$.

\begin{thm}\label{T2.1}
Let $M$ be a finite $\Lambda$-module with $|M|=p^n$. For each irreducible $f\in\Bbb Z_p[X]$, let $\overline f\in\Bbb Z[X]$ be a lift of $f$ and define
\begin{equation}\label{2.1}
M_f=\{x\in M: \overline f(t)^mx=0\ \text{for some}\ m\ge 0\}.
\end{equation}
Then
\begin{equation}\label{2.2}
M=\bigoplus_fM_f,
\end{equation}
where $f$ runs over all irreducible polynomials in $\Bbb Z_p[X]$. Moreover, if $N$ is another finite $\Lambda$-module whose order is a power of $p$, then $M\cong N$ if and only if $M_f\cong N_f$ for all irreducible $f\in\Bbb Z_p[X]$.
\end{thm}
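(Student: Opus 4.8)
The plan is to prove the theorem in two stages: first establish the direct-sum decomposition \eqref{2.2}, and then verify the isomorphism criterion. Throughout I will work with the fixed prime $p$, so that $M$ is a finite abelian $p$-group carrying a $\Lambda$-module structure via the automorphism $t$.

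For the decomposition, the key observation is that the action of $t$ on $M$ descends to an action on the $\Bbb Z_p$-vector space $M/pM$, where $t$ satisfies its minimal polynomial over $\Bbb Z_p$. Since $\Bbb Z_p[X]$ is a principal ideal domain, that minimal polynomial factors as a product $\prod_f f^{e_f}$ of powers of distinct irreducibles $f$. The idea is to show that the annihilator ideal of $M$ in $\Bbb Z_p[t]$ (equivalently, the behavior of $\overline f(t)$ on $M$) forces $M$ to break up along these irreducible factors. Concretely, I would argue that if $g = \prod_f \overline f(t)^{m_f}$ annihilates $M$ for suitable $m_f$ — which it does, since $M$ is finite and each element is killed by a high power of its minimal polynomial lifted to $\Bbb Z[X]$ — then the coprimality of the factors $\overline f(t)^{m_f}$ modulo $p$ can be bootstrapped to a genuine coprimality on the $p$-group $M$. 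This is where the structure of $M$ as a $p$-group is essential: two lifts $\overline f$ and $\overline g$ of distinct irreducibles generate the unit ideal in $\Bbb Z_p[X]$, and by Nakayama-type lifting (or a direct filtration argument on $pM \supset p^2 M \supset \cdots$) the corresponding operators remain coprime as endomorphisms of $M$. From coprimality one obtains idempotent-type decomposition maps realizing $M = \bigoplus_f M_f$ as a direct sum of $\Lambda$-submodules. I would also confirm that each $M_f$ is genuinely the generalized $\overline f(t)$-primary component defined in \eqref{2.1}, i.e.\ that this submodule is exactly the kernel of the $\overline f(t)$-part, which follows once the complementary factor acts invertibly on $M_f$.

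For the isomorphism criterion, the ``only if'' direction is immediate: any $\Lambda$-isomorphism $\phi\colon M\to N$ commutes with the action of $t$, hence with $\overline f(t)$, and therefore maps $M_f$ onto $N_f$ for every $f$, restricting to an isomorphism $M_f\cong N_f$. The ``if'' direction is equally clean: given isomorphisms $M_f\cong N_f$ for all $f$, their direct sum is a $\Lambda$-isomorphism $M=\bigoplus_f M_f\to\bigoplus_f N_f=N$, using the decomposition \eqref{2.2} for both $M$ and $N$. The only subtlety to check is that the decomposition is canonical — that $M_f$ depends only on $f$ and not on the chosen lift $\overline f$ — so that the two decompositions are compatible; this follows because $M_f$ as defined in \eqref{2.1} is intrinsic (any two lifts differ by $p\,\Bbb Z[X]$, and on the $p$-group $M$ this ambiguity is absorbed into the ``some $m\ge 0$'' clause).

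I expect the main obstacle to be the lifting of coprimality from $M/pM$ to $M$ itself, that is, upgrading the fact that the $\overline f(t)^{m_f}$ are pairwise coprime modulo $p$ to a usable direct-sum decomposition of the full $p$-group $M$. A clean way to handle this is to note that for distinct irreducibles $f\ne g$, the operators $\overline f(t)$ and $\overline g(t)$ together with multiplication by $p$ generate all of $\operatorname{End}$ on the relevant quotients, so that some $\Bbb Z[t]$-combination $a(t)\overline f(t)^{m}+b(t)\overline g(t)^{m}$ equals $1$ modulo $p$; then $1-\bigl(a(t)\overline f(t)^{m}+b(t)\overline g(t)^{m}\bigr)$ acts nilpotently on the $p$-group $M$, so a Neumann-series inverse exists and restores exact coprimality. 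Once this is in place, the rest of the argument is the standard primary-decomposition bookkeeping over a PID-like coefficient ring, and the isomorphism criterion is formal.
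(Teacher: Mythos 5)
Your proposal is correct and follows essentially the same route as the paper: decompose via the factorization of the minimal polynomial of $t$ on $M/pM$, use B\'ezout coprimality in $\Bbb Z_p[X]$, lift to $\Bbb Z[X]$, and invoke the fact that an endomorphism of the finite $p$-group $M$ congruent to the identity modulo $p$ is an automorphism (your Neumann-series observation is exactly the paper's mechanism for both the directness and the spanning of the sum). The isomorphism criterion is handled identically in both arguments.
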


\noindent{\bf Note.} $M_f$ depends only on $f$ but not on $\overline f$. Also, $M_f=0$ unless $f$ divides the minimal polynomial of $t$ (viewed as an element of $\text{End}_{\Bbb Z_p}(M/pM)$).

\begin{proof}[Proof of Theorem~\ref{T2.1}]
$1^\circ$ Let the minimal polynomial of $t$ ($\in\text{End}_{\Bbb Z_p}(M/pM)$) be $f_1^{e_1}\cdots f_k^{e_k}$, where $f_1,\dots,f_k\in\Bbb Z_p[X]$ are distinct irreducibles and $e_1,\dots, e_k$ are positive integers. We claim that
\begin{equation}\label{2.3}
M=\bigoplus_{1\le i\le k}M_{f_i}.
\end{equation}

We first show that $\sum_{1\le i\le k}M_{f_i}$ is a direct sum. Assume that $x\in M_{f_i}\cap\bigl(\sum_{1\le j\le k,\, j\ne i}M_{f_j}\bigr)$.
Then there exists $m>0$ such that $\overline{f_i}(t)^m x=0$ and \break
$\bigl(\prod_{1\le j\le k,\, j\ne i}\overline{f_j}(t)\bigr)^mx=0$. Since $\text{gcd}\bigl( f_i,
\prod_{1\le j\le k,\, j\ne i}f_j\bigr)=1$, there exist $u,v\in\Bbb Z_p[X]$ such that
\[
uf_i^m+v\Bigl(\prod_{\substack{1\le j\le k\cr j\ne i}}f_j\Bigr)^m=1.
\]
Let $\overline u,\overline v\in\Bbb Z[X]$ be arbitrary lifts of $u,v$, respectively. Then 
\[
\overline u\overline{f_i}^m+\overline v\Bigl(\prod_{\substack{1\le j\le k\cr j\ne i}}\overline{f_j}\Bigr)^m\equiv 1\pmod p.
\]
Therefore
\[
\overline u(t)\overline{f_i}(t)^m+\overline v(t)\Bigl(\prod_{\substack{1\le j\le k\cr j\ne i}}\overline{f_j}(t)\Bigr)^m\in\text{Aut}_{\Bbb Z}(M).
\]
Since
\[
\Bigl[\overline u(t)\overline{f_i}(t)^m+\overline v(t)\Bigl(\prod_{\substack{1\le j\le k\cr j\ne i}}\overline{f_j}(t)\Bigr)^m\Bigr]x=0,
\]
we have $x=0$.

Now we prove that $M=\sum_{1\le i\le k}M_{f_i}$. There exist $u_1,\dots,u_k\in\Bbb Z_p[X]$ such that
\[
\sum_{1\le i\le k}u_i\Bigl(\prod_{\substack{1\le j\le k\cr j\ne i}}f_j^{e_j}\Bigr)^n=1.
\]
Let $\overline{u_i}\in\Bbb Z[X]$ be a lift of $u_i$ and let $F_i=\prod_{1\le j\le k,\, j\ne i}\overline{f_j}^{e_j}$. Then
\[
\sum_{1\le i\le k}\overline{u_i}F_i^n\equiv 1\pmod p.
\]
Thus $\sum_{1\le i\le k}\overline{u_i}(t)F_i(t)^n\in\text{Aut}_\Bbb Z(M)$. It follows that 
\begin{equation}\label{2.4}
M=\sum_{1\le i\le k}F_i(t)^nM.
\end{equation}
Since $\bigl(\prod_{1\le j\le k}\overline{f_j}(t)^{e_j}\bigr)M\subset pM$, we have
\[
\overline{f_i}(t)^{e_in}F_i(t)^nM=\Bigl(\prod_{1\le j\le k}\overline{f_j}(t)^{e_j}\Bigr)^nM\subset p^nM=0.
\]
Thus $F_i(t)^nM\subset M_{f_i}$. Then it follows from \eqref{2.4} that $M=\sum_{1\le i\le k}M_{f_i}$.

\medskip

$2^\circ$ Let $N$ be another finite $\Lambda$-module whose order is a power of $p$. If there is a $\Lambda$-module isomorphism $\phi:M\to N$, then for each irreducible $f\in\Bbb Z_p[X]$, $\phi|_{M_f}:M_f\to N_f$ is an isomorphism. Conversely, if $M_f\cong N_f$ for all irreducible $f\in\Bbb Z_p[X]$, then by \eqref{2.2}, $M\cong N$.  
\end{proof}  

%%%%%%%%%%%%%%%%%%%%%%%%%%%%%%%%%%%%%%%%
%    section 3
%%%%%%%%%%%%%%%%%%%%%%%%%%%%%%%%%%%%%%%%

\section{Classification of $\Lambda$-Modules of Order $p^n$, $n\le 4$}

\subsection{The automorphism group of a finite abelian group}\

Let $p$ be a prime and let $m\ge n>0$ be integers. Elements of $\Bbb Z_{p^m}$ can be viewed as elements of $\Bbb Z_{p^n}$ via the homomorphism
\[
\begin{array}{cccc}
\Bbb Z_{p^m}&\longrightarrow&\Bbb Z_{p^n}\cr
a+p^m\Bbb Z&\longmapsto &a+p^n\Bbb Z,&\ a\in\Bbb Z.
\end{array}
\]
Likewise, elements of $p^{m-n}\Bbb Z_{p^n}$ can be viewed as elements of $\Bbb Z_{p^m}$ via the embedding
\[
\begin{array}{cccc}
p^{m-n}\Bbb Z_{p^n}&\longrightarrow&\Bbb Z_{p^m}\cr
p^{m-n}(a+p^n\Bbb Z)&\longmapsto &p^{m-n}a+p^m\Bbb Z,&\ a\in\Bbb Z.
\end{array}
\]
We shall adopt these conventions hereafter.

Let $M=\Bbb Z_{p^{e_1}}^{n_1}\times\cdots\times\Bbb Z_{p^{e_k}}^{n_k}$, where $n_i>0$ and $e_1>\cdots>e_k>0$. Elements of $\text{End}_{\Bbb Z}(M)$ are of the form
\[
\begin{array}{ccccc}
\sigma_A:& M &\longrightarrow &M\cr
& \left[\begin{matrix} x_1\cr\vdots \cr x_k\end{matrix}\right]&\longmapsto& A\left[\begin{matrix} x_1\cr\vdots \cr x_k\end{matrix}\right],&\ x_i\in\Bbb Z_{p^{e_i}}^{n_i},
\end{array}
\]
where
\begin{equation}\label{3.1}
A=\left[
\begin{matrix}
A_{11}&p^{e_1-e_2}A_{12}&\cdots&p^{e_1-e_k}A_{1k}\cr
A_{21}&A_{22}&\cdots&p^{e_2-e_1}A_{2k}\cr
\vdots&\vdots&&\vdots\cr
A_{k1}&A_{k2}&\cdots&A_{kk}
\end{matrix}\right],
\end{equation}
and $A_{ij}\in\text{M}_{n_i\times n_j}(\Bbb Z_{p^{e_i}})$.
Let $\frak M(M)$ denote the set of all matrices of the form \eqref{3.1}. Then
\[
\begin{array}{ccc}
\frak M(M)&\longrightarrow&\text{End}_{\Bbb Z}(M)\cr
A&\longmapsto&\sigma_A
\end{array}
\]
is a ring isomorphism. Let $\text{GL}(M)$ denote the group of units of $\frak M(M)$. Of course $\text{GL}(M)\cong \text{Aut}_{\Bbb Z}(M)$ under the above isomorphism. It is known
\cite{Hil-Rhe07, Ran1907} (and also easy to prove) that
\[
\text{GL}(M)=\{A: A\ \text{is of the form \eqref{3.1} with}\ A_{ii}\in\text{GL}(n_i,\Bbb Z_{p^{e_i}}),\ 1\le i\le k\}.
\]
The modulo $p$ reduction from $\text{GL}(M)$ to $\text{GL}(n_1+\cdots+n_k,\Bbb Z_p)$ is denoted by $\overline{(\ )}$.
For each (monic) irreducible $f\in\Bbb Z_p[X]$ with $f\ne X$, define
\[
\begin{split}
\text{GL}(M)_f=\{A\in\text{GL}(M):\;& \text{the minimal polynomial of}\cr
& \overline A\in \text{GL}(n_1+\cdots+n_k,\Bbb Z_p)\ \text{is a power of}\ f\}.
\end{split}
\]
If $\lambda^{(i)}=(\lambda_{i1},\lambda_{i2},\dots)$ is a partition of the integer $n_i/\deg f$, $1\le i\le k$, we define
\[
\begin{split}
\text{GL}(M)_f^{\lambda^{(1)}\dots\lambda^{(k)}}=\{\text{$A$ as in \eqref{3.1}} :\; &\text{the elementary divisors of $\overline {A_{ii}}$}\cr
&\text{are $f^{\lambda_{i1}}, 
f^{\lambda_{i2}},\dots$},\ 1\le i\le k\}.
\end{split}
\]  

In this setting, our goal is to determine the $\text{GL}(M)$-conjugacy classes in $\text{GL}(M)_f$. We will proceed according to the structure of $(M,+)$.

\subsection{$(M,+)=\Bbb Z_{p^e}$}\

In this case we must have $f=X-a$, $a\in\Bbb Z_p^\times$. The conjugacy classes in $\text{GL}(M)_f$ are represented by
\[
[b],\qquad b\in\Bbb Z_{p^e},\ b\equiv a \pmod p.
\]

\subsection{$(M,+)=\Bbb Z_p^n$}\

In this case we must have $\deg f\mid n$. The conjugacy classes in $\text{GL}(M)_f$ are represented by the rational canonical forms in $\text{GL}(n,\Bbb Z_p)$ with elementary divisors $f^{\lambda_1}, f^{\lambda_2},\dots$, where $\lambda_1\ge \lambda_2\ge\cdots>0$ is a partition of $n/\deg f$.

\subsection{$(M,+)=\Bbb Z_{p^e}\times \Bbb Z_p$, $e>1$}\

In this case, $\deg f=1$.

\begin{thm}\label{T3.1}
Assume $(M,+)=\Bbb Z_{p^e}\times\Bbb Z_p$, $e>1$, and $f=X-a$, $a\in\Bbb Z_p^\times$. The conjugacy classes in $\text{\rm GL}(M)_f$ are represented by the following matrices:
\begin{itemize}
  \item [(i)] $\left[\begin{matrix} b\cr &b\end{matrix}\right]+\left[\begin{matrix} p^{e-1}\alpha&0\cr 0&0\end{matrix}\right]$,\kern 5mm  $0<b<p^{e-1}$, $b\equiv a\pmod p$, $\alpha\in\Bbb Z_p$.
  \medskip  

  \item [(ii)] $\left[\begin{matrix} b\cr &b\end{matrix}\right]+\left[\begin{matrix} 0&0\cr 1&0\end{matrix}\right]$,\kern 5mm  $0<b<p^{e-1}$, $b\equiv a\pmod p$.
  \medskip

  \item  [(iii)] $\left[\begin{matrix} b\cr &b\end{matrix}\right]+\left[\begin{matrix} 0& p^{e-1}\cr \gamma&0\end{matrix}\right]$,\kern 5mm  $0<b<p^{e-1}$, $b\equiv a\pmod p$, $\gamma\in\Bbb Z_p$.
\end{itemize}  
\end{thm}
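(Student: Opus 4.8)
The plan is to pin down the shape of the matrices in $\text{GL}(M)_f$, strip off a central scalar to reduce to a three‑parameter family over $\Bbb Z_p$, and then compute the conjugation action explicitly on those three parameters. First I would describe $\text{GL}(M)_f$. Writing $A=\bigl[\begin{smallmatrix}A_{11}&p^{e-1}A_{12}\\A_{21}&A_{22}\end{smallmatrix}\bigr]$ with $A_{11}\in\Bbb Z_{p^e}^\times$, $A_{22}\in\Bbb Z_p^\times$, and $A_{12},A_{21}\in\Bbb Z_p$, the assumption $e>1$ forces the $(1,2)$ entry to vanish modulo $p$, so $\overline A$ is lower triangular with diagonal $(\overline{A_{11}},A_{22})$. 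Requiring the minimal polynomial of $\overline A$ to be a power of $X-a$ then forces $A_{11}\equiv a\pmod p$ and $A_{22}=a$, so the free data is the triple $(A_{11},A_{12},A_{21})$.

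Next I would extract the coarse invariant. The submodule $pM\cong\Bbb Z_{p^{e-1}}$ is characteristic, and a direct computation shows that $t$ acts on it as multiplication by $A_{11}\bmod p^{e-1}$; hence $b:=A_{11}\bmod p^{e-1}$ is a conjugacy invariant, which explains the appearance of $b$ (with $0<b<p^{e-1}$, since $a\not\equiv0$) in all three cases. Because the scalar matrix $bI=\mathrm{diag}(b,a)$ is central in $\text{GL}(M)$, I may subtract it and instead classify $B:=A-bI=\bigl[\begin{smallmatrix}p^{e-1}\alpha&p^{e-1}\beta\\\gamma&0\end{smallmatrix}\bigr]$ up to conjugacy, where $\alpha,\beta,\gamma\in\Bbb Z_p$ via $p^{e-1}\alpha=A_{11}-b$, $\beta=A_{12}$, $\gamma=A_{21}$.

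The heart of the argument is to compute the conjugation action of $\text{GL}(M)$ on $(\alpha,\beta,\gamma)$. Using that $\text{GL}(M)$ is generated by its diagonal subgroup together with the upper‑ and lower‑unitriangular subgroups (an $LU$‑type factorization, valid since $g_{11}$ is a unit), I would verify—keeping careful track of the mixed moduli in the matrix products—that a diagonal conjugation sends $(\alpha,\beta,\gamma)\mapsto(\alpha,w\beta,w^{-1}\gamma)$ for $w\in\Bbb Z_p^\times$, that the upper unipotent with parameter $\xi$ sends $\alpha\mapsto\alpha+\xi\gamma$, and that the lower unipotent with parameter $\eta$ sends $\alpha\mapsto\alpha-\eta\beta$, with $\beta,\gamma$ fixed in both unipotent cases. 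Reading off the orbits yields exactly the trichotomy: if $\beta=\gamma=0$ then $\alpha$ is rigid, giving case (i); if $\beta=0$ and $\gamma$ is a unit, one normalizes $\gamma=1$ and kills $\alpha$, giving case (ii); and if $\beta$ is a unit one normalizes the $(1,2)$‑coefficient to $1$, kills $\alpha$, and is left with the invariant $\beta\gamma\in\Bbb Z_p$ in the role of $\gamma$ in case (iii).

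Finally I would show the representatives are pairwise non‑conjugate by exhibiting separating invariants: whether $\beta$ (resp.\ $\gamma$) is zero or a unit is preserved by the action, the value $\alpha$ distinguishes the members of case (i), and $B^2=\bigl[\begin{smallmatrix}p^{e-1}\beta\gamma&0\\0&0\end{smallmatrix}\bigr]$ distinguishes the members of case (iii). I expect the main obstacle to be twofold. One part is the bookkeeping of the mixed‑modulus arithmetic—in particular the fact that a unipotent conjugation shifts the top $p$‑adic digit of $A_{11}$, which is precisely what lets us normalize $\alpha$. The other, more conceptual, part is that the mod‑$p$ reduction is \emph{not} a complete invariant: cases (ii) and (iii) can share the Jordan‑block reduction $\bigl[\begin{smallmatrix}a&0\\1&a\end{smallmatrix}\bigr]$, so the finer invariant $B^2$ (equivalently, the action on $M[p]$) is genuinely needed, together with the absence of a Weyl reflection—the $(1,2)$ entry of every $g$ is $\equiv0\pmod p$—to keep the $\beta$‑unit and $\gamma$‑unit cases apart.
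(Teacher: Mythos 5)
Your proposal is correct and takes essentially the same route as the paper: both reduce the problem to classifying the triples $(\alpha,\beta,\gamma)\in\Bbb Z_p^3$ under the induced action of $\text{GL}(M)$, which the paper packages as the two-sided triangular equivalence \eqref{3.2} and you unpack via an $LU$-type generating set, arriving at the same three orbit families. Your explicit separating invariants (the unit/zero status of $\beta$ and $\gamma$, the value of $\alpha$ in case (i), and $\beta\gamma$ read off from $B^2$) merely supply the detail behind the paper's ``it is easy to see.''
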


\begin{proof} Elements of $\text{GL}(M)_f$ are of the form
\[
A(b,\alpha,\beta,\gamma):=\left[\begin{matrix} b&0\cr 0&b\end{matrix}\right]+\left[\begin{matrix} p^{e-1}\alpha&p^{e-1}\beta\cr \gamma&0\end{matrix}\right],
\]
where $0<b<p^{e-1}$, $b\equiv a\pmod p$, $\alpha,\beta,\gamma\in\Bbb Z_p$. Let $A(b,\alpha,\beta,\gamma), A(b,\alpha',\beta',\gamma')\in\text{GL}(M)_f$ and
\[
P=\left[
\begin{matrix} x&p^{e-1}y\cr
z&w\end{matrix}\right]\in\text{GL}(M),\quad x\in\Bbb Z_{p^e}^\times,\ w\in\Bbb Z_p^\times,\ y,z\in\Bbb Z_p.
\]
The equation $PA(b,\alpha,\beta,\gamma)=A(b,\alpha',\beta',\gamma')P$ is equivalent to 
\[
\left[
\begin{matrix}
p^{e-1}(x\alpha+y\gamma)& p^{e-1}x\beta\cr
w\gamma&0\end{matrix}\right]=
\left[
\begin{matrix}
p^{e-1}(\alpha'x+\beta'z)& p^{e-1}\beta'w\cr
\gamma'x&0\end{matrix}\right].
\]
The above equation can be written as a matrix equation over $\Bbb Z_p$:
\[
\left[
\begin{matrix}
x\alpha+y\gamma& x\beta\cr
w\gamma&0\end{matrix}\right]=
\left[
\begin{matrix}
\alpha'x+\beta'z& \beta'w\cr
\gamma'x&0\end{matrix}\right],
\]
equivalently,
\[
\left[
\begin{matrix} x&y\cr 0&w\end{matrix}\right]
\left[
\begin{matrix} \beta&\alpha\cr 0&\gamma\end{matrix}\right]
=\left[
\begin{matrix} \beta'&\alpha'\cr 0&\gamma'\end{matrix}\right]
\left[
\begin{matrix} w&z\cr 0&x\end{matrix}\right].
\]
So, $A(b,\alpha,\beta,\gamma)$ and $A(b,\alpha',\beta',\gamma')$ are conjugate if and only if there exist $x,y\in\Bbb Z_p^\times$ and $y,z\in\Bbb Z_p$ such that
\begin{equation}\label{3.2}
\left[
\begin{matrix} x&y\cr 0&w\end{matrix}\right]
\left[
\begin{matrix} \beta&\alpha\cr 0&\gamma\end{matrix}\right]
\left[
\begin{matrix} w^{-1}&z\cr 0&x^{-1}\end{matrix}\right]=
\left[
\begin{matrix} \beta'&\alpha'\cr 0&\gamma'\end{matrix}\right].
\end{equation}
Let $\mathcal M=\{\left[\begin{smallmatrix}\beta&\alpha\cr 0&\gamma\end{smallmatrix}\right]:\alpha,\beta,\gamma\in\Bbb Z_p\}$. For $A=\left[\begin{smallmatrix}\beta&\alpha\cr 0&\gamma\end{smallmatrix}\right],\ A'=\left[\begin{smallmatrix}\beta'&\alpha'\cr 0&\gamma'\end{smallmatrix}\right]\in\mathcal M$, say $A\sim A'$ if \eqref{3.2} is satisfied for some $x,w\in\Bbb Z_p^\times$ and $y,z\in\Bbb Z_p$. It is easy to see that the $\sim$~equivalence classes in $\mathcal M$ are represented by
\begin{itemize}
  \item [(i)] $\left[\begin{matrix}0&\alpha\cr 0&0\end{matrix}\right],\quad\alpha\in\Bbb Z_p$,
  \medskip
  \item [(ii)]  $\left[\begin{matrix}0\cr &1\end{matrix}\right]$,
  \medskip
  \item [(iii)] $\left[\begin{matrix}1\cr &\gamma\end{matrix}\right],\quad \gamma\in\Bbb Z_p$.
\end{itemize}  
These matrices correspond to the representatives of the conjugacy classes in $\text{GL}(M)_f$ stated in the theorem.
\end{proof}

\subsection{$(M,+)=\Bbb Z_{p^2}^2$}\

In this case, $\deg f=1$ or $2$.

\begin{thm}\label{T3.2}
Assume $(M,+)=\Bbb Z_{p^2}^2$.
\begin{itemize}
  \item [(i)] Let $f=X-a$, $a\in\Bbb Z_p^\times$. Then the conjugacy classes in $\text{\rm GL}(M)_f^{(1,1)}$ are represented by the following matrices:
  \begin{itemize}  
    \item [(i.1)] $\left[\begin{matrix} b\cr &b\end{matrix}\right]+p\left[\begin{matrix} \alpha\cr &\delta\end{matrix}\right],\quad 0<b<p,\ b\equiv a\pmod p,\ 0\le \alpha\le \delta<p$.
    \medskip
    \item [(i.2)] $\left[\begin{matrix} b\cr &b\end{matrix}\right]+p\left[\begin{matrix} \alpha&1\cr &\alpha\end{matrix}\right],\quad 0<b<p,\ b\equiv a\pmod p,\ \alpha\in\Bbb Z_p$. 
    \medskip
    \item [(i.3)] $\left[\begin{matrix} b\cr &b\end{matrix}\right]+p\left[\begin{matrix} 0&1\cr -b_0&-b_1\end{matrix}\right],\quad 0<b<p,\ b\equiv a\pmod p,\ X^2+b_1X+b_0\in\Bbb Z_p[X]$ irreducible. 
  \end{itemize}    
 \item [(ii)] Let $f=X-a$, $a\in\Bbb Z_p^\times$. Then the conjugacy class in $\text{\rm GL}(M)_f^{(2)}$ are represented by
\[
\left[\begin{matrix} b&1\cr 0&b\end{matrix}\right]+p\left[\begin{matrix} \alpha&0\cr \gamma&0\end{matrix}\right],\quad 0<b<p,\ b\equiv a\pmod p,\ \alpha,\gamma\in\Bbb Z_p.
\]

\item [(iii)] Let $f=X^2+a_1X+a_0\in\Bbb Z_p[X]$ be irreducible. Then the conjugacy classes in $\text{\rm GL}(M)_f$ are represented by  
\[
\begin{split}
\left[\begin{matrix} 0&1\cr -b_0&-b_1\end{matrix}\right]+p\left[\begin{matrix} \alpha&\beta\cr 0&0\end{matrix}\right],\quad &0\le b_0,b_1<p,\ b_0\equiv a_0\pmod p,\cr
& b_1\equiv a_1\pmod p,\ \alpha,\beta\in\Bbb Z_p.
\end{split}
\]
\end{itemize}
\end{thm}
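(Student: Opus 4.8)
The plan is to handle all three parts through a single reduction. Since $(M,+)=\Bbb Z_{p^2}^2$ we have $\frak M(M)=\text{M}_{2\times 2}(\Bbb Z_{p^2})$ and $\text{GL}(M)=\text{GL}(2,\Bbb Z_{p^2})$, with reduction mod $p$ carrying $A$ to $\overline A\in\text{GL}(2,\Bbb Z_p)$. In each case the partition data pin down the similarity type of $\overline A$ over the field $\Bbb Z_p$: in (i) $\overline A$ is the scalar $aI$; in (ii) $\overline A$ is conjugate to the Jordan block $J=\left[\begin{smallmatrix}a&1\\0&a\end{smallmatrix}\right]$; in (iii), since $\deg f=2$ and the matrix is $2\times 2$, the minimal polynomial must be $f$ itself, so $\overline A$ is conjugate to the companion matrix $C_f=\left[\begin{smallmatrix}0&1\\-a_0&-a_1\end{smallmatrix}\right]$. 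Conjugating by a lift of the relevant change-of-basis matrix, I may assume $\overline A$ equals a fixed canonical $\overline{A_0}$, and then write $A=A_0+pC$ with $A_0$ the obvious integral lift ($bI$, $\left[\begin{smallmatrix}b&1\\0&b\end{smallmatrix}\right]$, or $\left[\begin{smallmatrix}0&1\\-b_0&-b_1\end{smallmatrix}\right]$) and $C\in\text{M}_{2\times 2}(\Bbb Z_p)$, since the entries of $pC$ matter only modulo $p$.

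Next I would analyze when $A_0+pC$ and $A_0+pC'$ are conjugate in $\text{GL}(2,\Bbb Z_{p^2})$. Because $\overline A=\overline{A'}=\overline{A_0}$, any conjugating $P$ satisfies $\overline P\in H:=C_{\text{GL}(2,\Bbb Z_p)}(\overline{A_0})$, so it suffices to study the action of $K:=\{P:\overline P\in H\}$ on the fibre $\{A_0+pC\}$. The principal congruence subgroup $K_1=\{I+pP_1\}$ acts by $C\mapsto C+[P_1,\overline{A_0}]$, so its orbits are exactly the cosets of the image of the adjoint map $\text{ad}_{\overline{A_0}}:Q\mapsto \overline{A_0}\,Q-Q\,\overline{A_0}$; thus the first reduction identifies $C$ with its class in $V:=\text{M}_{2\times 2}(\Bbb Z_p)/\text{im}\,\text{ad}_{\overline{A_0}}$. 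The quotient $K/K_1\cong H$ then acts on $V$. The key simplification is that in each case $H$ lifts into the centralizer of $A_0$ itself: for (i) $A_0=bI$ is central, and for (ii), (iii) the units of $\Bbb Z_{p^2}[A_0]$ reduce onto $H$ and commute with $A_0$. Choosing such lifts kills the residual ``carry'' term, so the induced action is simply $C+\text{im}\,\text{ad}_{\overline{A_0}}\mapsto gCg^{-1}+\text{im}\,\text{ad}_{\overline{A_0}}$, and the conjugacy classes in $\text{GL}(M)_f$ correspond bijectively to the $H$-orbits on $V$.

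It then remains to carry out the three orbit computations. In case (i), $\overline{A_0}=aI$ is central, so $\text{ad}_{\overline{A_0}}=0$, $V=\text{M}_{2\times 2}(\Bbb Z_p)$, and $H=\text{GL}(2,\Bbb Z_p)$ acts by ordinary conjugation; the orbits are the similarity classes of $2\times 2$ matrices over the field $\Bbb Z_p$, namely the three families of rational canonical forms in (i.1)--(i.3) (split diagonalizable with $\alpha\le\delta$, a single Jordan block, and irreducible characteristic polynomial). In case (ii) a direct computation gives $\text{im}\,\text{ad}_J=\{\left[\begin{smallmatrix}s&u\\0&-s\end{smallmatrix}\right]\}$, so $\{\left[\begin{smallmatrix}\alpha&0\\\gamma&0\end{smallmatrix}\right]\}$ is a complement representing $V$; reducing $gCg^{-1}$ back into this complement and noting that the trace and the lower-left entry are unchanged shows that $H=\{xI+yN\}$ acts trivially, so each pair $(\alpha,\gamma)$ gives a distinct class. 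In case (iii), $C_f$ is regular semisimple, so $\text{ad}_{C_f}$ is semisimple and $\text{M}_{2\times 2}(\Bbb Z_p)=\Bbb Z_p[C_f]\oplus\text{im}\,\text{ad}_{C_f}$; one checks that $\{\left[\begin{smallmatrix}\alpha&\beta\\0&0\end{smallmatrix}\right]\}$ is a second complement to $\text{im}\,\text{ad}_{C_f}$, and since $V\cong\Bbb Z_p[C_f]$ is precisely where the abelian group $H=\Bbb Z_p[C_f]^\times$ acts by conjugation, that action is trivial, giving one class per $(\alpha,\beta)$.

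The main obstacle, and the step I would be most careful about, is the reduction in the second paragraph: verifying that the $K_1$-action is exactly translation by $\text{im}\,\text{ad}_{\overline{A_0}}$, and that $H$ lifts into the centralizer of $A_0$ so that the affine ``carry'' term vanishes and the $H$-action on $V$ is genuinely linear. Once that framework is in place, the three orbit counts are short: (i) is the classical $2\times 2$ rational canonical form, while (ii) and (iii) both reduce to showing that the abelian centralizer $H$ acts trivially on the two-dimensional quotient $V$, which I would confirm either by direct matrix computation or, as in (iii), by identifying $V$ with the centralizer algebra on which conjugation is automatically trivial.
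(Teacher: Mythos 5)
Your proposal is correct and is, in substance, the same reduction the paper carries out: fix the mod-$p$ canonical form $\overline{A_0}$, observe that any conjugator $P$ must reduce into the centralizer $H$ of $\overline{A_0}$, and then analyze the resulting action on the fibre $A_0+p\,\mathrm{M}_{2\times 2}(\Bbb Z_p)$. The paper's equations (3.4) and (3.7)--(3.8) are exactly your ``translation by $\mathrm{im}\,\mathrm{ad}_{\overline{A_0}}$ plus conjugation by a centralizer lift,'' only written out entry by entry rather than in terms of the adjoint map. Where you genuinely diverge is in how uniqueness of the representatives is verified in (ii) and (iii): the paper argues directly from the explicit matrix equations (in (iii) by taking traces twice), whereas you invoke the semisimplicity of $\mathrm{ad}_{C_f}$ to get $\mathrm{M}_{2\times 2}(\Bbb Z_p)=\Bbb Z_p[C_f]\oplus\mathrm{im}\,\mathrm{ad}_{C_f}$ and note that the commutative centralizer acts trivially on the quotient; this is cleaner, generalizes beyond the $2\times 2$ case, and all the facts you rely on check out ($b_0\ne 0$ makes $\{\left[\begin{smallmatrix}\alpha&\beta\cr 0&0\end{smallmatrix}\right]\}$ a complement, and in (ii) the trace and lower-left entry are indeed invariants of the combined action). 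The one step you flag as delicate --- that $H$ lifts into the centralizer of $A_0$ so the carry term can be normalized away --- is fine here because in each case $H=\Bbb Z_p[\overline{A_0}]^\times$ and $\Bbb Z_{p^2}[A_0]^\times$ surjects onto it; that is worth stating explicitly since for a general $\overline{A_0}$ over $\Bbb Z_{p^2}$ such a lifting is not automatic.
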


\begin{proof}
We remind the reader that in the proof, our notation is local in each of the three cases.

(i) Elements of $\text{GL}(M)_f^{(1,1)}$ are of the form
\[
A(b,\alpha,\beta,\gamma,\delta):=\left[\begin{matrix} b\cr &b\end{matrix}\right]+p\left[\begin{matrix} \alpha&\beta\cr \gamma&\delta\end{matrix}\right],\quad 0<b<p,\ b\equiv a\pmod p,\ \alpha,\beta,\gamma,\delta\in\Bbb Z_p.
\]
Clearly, $A(b,\alpha,\beta,\gamma,\delta)$ and $A(b,\alpha',\beta',\gamma',\delta')$ are conjugate in $\text{GL}(M)$ if and only if 
$\left[\begin{smallmatrix}\alpha&\beta\cr \gamma&\delta\end{smallmatrix}\right]$ and $\left[\begin{smallmatrix}\alpha'&\beta'\cr \gamma'&\delta'\end{smallmatrix}\right]$ are conjugate in
$\text{M}_{2\times 2}(\Bbb Z_p)$. The conjugacy classes in $\text{M}_{2\times 2}(\Bbb Z_p)$ are represented by 
\begin{itemize}
  \item [(1)] $\left[\begin{matrix} \alpha\cr &\delta\end{matrix}\right],\quad 0\le \alpha\le \delta<p$,
  \medskip
  \item [(2)] $\left[\begin{matrix} \alpha&1\cr &\alpha\end{matrix}\right],\quad \alpha\in\Bbb Z_p$,
  \medskip
  \item [(3)] $\left[\begin{matrix} 0&1\cr -b_0&-b_1\end{matrix}\right],\quad X^2+b_1X+b_0\in\Bbb Z_p[X]$ irreducible.
\end{itemize}  
These correspond to the matrices in (i.1) -- (i.3).

(ii) Elements of $\text{GL}(M)_f^{(2)}$ are conjugate to matrices of the form
\[
A(b,\alpha,\beta,\gamma,\delta):=\left[\begin{matrix}b&1\cr0&b\end{matrix}\right]+p\left[\begin{matrix}\alpha&\beta\cr \gamma&\delta\end{matrix}\right],\quad
0<b<p,\ b\equiv a\pmod p,\ \alpha,\beta,\gamma,\delta\in\Bbb Z_p.
\]
Assume that $P\in\text{GL}(M)=\text{GL}(2,\Bbb Z_{p^2})$ such that
\begin{equation}\label{3.3}
PA(b,\alpha,\beta,\gamma,\delta)= A(b,\alpha',\beta',\gamma',\delta')P.
\end{equation}
Then over $\Bbb Z_p$,
\[
\overline P\left[\begin{matrix}b&1\cr0&b\end{matrix}\right]=\left[\begin{matrix}b&1\cr0&b\end{matrix}\right]\overline P,
\]
which implies that $\overline P=\left[\begin{smallmatrix}c&d\cr0&c\end{smallmatrix}\right]$, $c\in\Bbb Z_p^\times$, $d\in\Bbb Z_p$. So
\[
P=\left[\begin{matrix}c&d\cr0&c\end{matrix}\right]+p\left[\begin{matrix}x&y\cr z&w\end{matrix}\right],\quad x,y,z,w\in\Bbb Z_p.
\]
Now \eqref{3.3} becomes
\[
p\left[\begin{matrix}c&d\cr0&c\end{matrix}\right]\left[\begin{matrix}\alpha&\beta\cr \gamma&\delta\end{matrix}\right]+p\left[\begin{matrix}x&y\cr z&w\end{matrix}\right]
\left[\begin{matrix}b&1\cr0&b\end{matrix}\right]=p\left[\begin{matrix}\alpha'&\beta'\cr \gamma'&\delta'\end{matrix}\right]\left[\begin{matrix}c&d\cr0&c\end{matrix}\right]+p
\left[\begin{matrix}b&1\cr0&b\end{matrix}\right]\left[\begin{matrix}x&y\cr z&w\end{matrix}\right].
\]
Over $\Bbb Z_p$, this becomes
\[
\left[\begin{matrix}c&d\cr0&c\end{matrix}\right]\left[\begin{matrix}\alpha&\beta\cr \gamma&\delta\end{matrix}\right]+\left[\begin{matrix}x&y\cr z&w\end{matrix}\right]
\left[\begin{matrix}b&1\cr0&b\end{matrix}\right]=\left[\begin{matrix}\alpha'&\beta'\cr \gamma'&\delta'\end{matrix}\right]\left[\begin{matrix}c&d\cr0&c\end{matrix}\right]+
\left[\begin{matrix}b&1\cr0&b\end{matrix}\right]\left[\begin{matrix}x&y\cr z&w\end{matrix}\right],
\]
which can be simplified as
\[
\left[\begin{matrix}c&d\cr0&c\end{matrix}\right]\left[\begin{matrix}\alpha&\beta\cr \gamma&\delta\end{matrix}\right]=\left[\begin{matrix}\alpha'&\beta'\cr \gamma'&\delta'\end{matrix}\right]\left[\begin{matrix}c&d\cr0&c\end{matrix}\right]+\left[\begin{matrix}z&w-x\cr0&-z\end{matrix}\right].
\]
Thus, $A(b,\alpha,\beta,\gamma,\delta)$ and $A(b,\alpha',\beta',\gamma',\delta')$ are conjugate if and only if there exist $d,z,w\in\Bbb Z_p$ such that
\begin{equation}\label{3.4}
\left[\begin{matrix}1&d\cr 0&1\end{matrix}\right]\left[\begin{matrix}\alpha&\beta\cr \gamma&\delta\end{matrix}\right]\left[\begin{matrix}1&-d\cr 0&1\end{matrix}\right]
+\left[\begin{matrix}z&w\cr 0&-z\end{matrix}\right]=\left[\begin{matrix}\alpha'&\beta'\cr \gamma'&\delta'\end{matrix}\right].
\end{equation}
For $A=\left[\begin{smallmatrix}\alpha&\beta\cr \gamma&\delta\end{smallmatrix}\right]$, $A'=\left[\begin{smallmatrix}\alpha'&\beta'\cr \gamma'&\delta'\end{smallmatrix}\right]
\in\text{M}_{2\times 2}(\Bbb Z_p)$, say $A\sim A'$ if \eqref{3.4} is satisfied for some $d,z,w\in\Bbb Z_p$. It is easy to see that the $\sim$ equivalence classes in 
$\text{M}_{2\times 2}(\Bbb Z_p)$ are represented by
\[
\left[\begin{matrix}\alpha&0\cr \gamma&0\end{matrix}\right],\quad \alpha,\beta\in\Bbb Z_p,
\]
which correspond to the matrices in (ii).

(iii) Elements of $\text{GL}(M)_f$ are conjugate to matrices of the form
\[
\begin{split}
A(f,\alpha,\beta,\gamma,\delta):=\left[\begin{matrix}0&1\cr -b_0&-b_1\end{matrix}\right]+p\left[\begin{matrix}\alpha&\beta\cr \gamma&\delta\end{matrix}\right],
\quad & 0\le b_0,b_1<p,\ b_0\equiv a_0\pmod p,\cr
&b_1\equiv a_1\pmod p,\ \alpha,\beta,\gamma,\delta\in\Bbb Z_p.
\end{split}
\]
Assume that $P\in\text{GL}(2,\Bbb Z_{p^2})$ such that
\begin{equation}\label{3.5}
PA(f,\alpha,\beta,\gamma,\delta)=A(f,\alpha',\beta',\gamma',\delta')P.
\end{equation}
Then over $\Bbb Z_p$, 
\[
\overline P \left[\begin{matrix}0&1\cr -b_0&-b_1\end{matrix}\right]=\left[\begin{matrix}0&1\cr -b_0&-b_1\end{matrix}\right]\overline P,
\]
which implies that
\[
P=uI+v\left[\begin{matrix}0&1\cr -b_0&-b_1\end{matrix}\right]+p\left[\begin{matrix}x&y\cr z&w\end{matrix}\right],\quad 0\le u,v<p,\ (u,v)\ne(0,0),\ x,y,z,w\in\Bbb Z_p.
\]
Now \eqref{3.5} becomes the following equation over $\Bbb Z_p$:
\begin{equation}\label{3.6}
\begin{split}
\Bigl(uI+v\left[\begin{matrix}0&1\cr -b_0&-b_1\end{matrix}\right]\Bigr)\left[\begin{matrix}\alpha&\beta\cr \gamma&\delta\end{matrix}\right]=\;&
\left[\begin{matrix}\alpha'&\beta'\cr \gamma'&\delta'\end{matrix}\right]\Bigl(uI+v\left[\begin{matrix}0&1\cr -b_0&-b_1\end{matrix}\right]\Bigr)\cr
&+\left[\begin{matrix}0&1\cr -b_0&-b_1\end{matrix}\right]\left[\begin{matrix}x&y\cr z&w\end{matrix}\right]-\left[\begin{matrix}x&y\cr z&w\end{matrix}\right]
\left[\begin{matrix}0&1\cr -b_0&-b_1\end{matrix}\right].
\end{split}
\end{equation}
The space $\bigl\{\left[\begin{smallmatrix}0&1\cr -b_0&-b_1\end{smallmatrix}\right]X-X\left[\begin{smallmatrix}0&1\cr -b_0&-b_1\end{smallmatrix}\right]:X\in\text{M}_{2\times 2}(\Bbb Z_p)\bigr\}$ has dimension 2 over $\Bbb Z_p$ \cite[\S4.4]{Hor-Joh91} and has a basis
\begin{gather*}
\left[\begin{matrix}0&1\cr -b_0&-b_1\end{matrix}\right]\left[\begin{matrix}1&0\cr 0&0\end{matrix}\right]-\left[\begin{matrix}1&0\cr 0&0\end{matrix}\right]
\left[\begin{matrix}0&1\cr -b_0&-b_1\end{matrix}\right]=\left[\begin{matrix}0&-1\cr -b_0&0\end{matrix}\right], \\
\left[\begin{matrix}0&1\cr -b_0&-b_1\end{matrix}\right]\left[\begin{matrix}0&1\cr 0&0\end{matrix}\right]-\left[\begin{matrix}0&1\cr 0&0\end{matrix}\right]
\left[\begin{matrix}0&1\cr -b_0&-b_1\end{matrix}\right]=\left[\begin{matrix}b_0&b_1\cr 0&-b_0\end{matrix}\right].
\end{gather*}
So \eqref{3.6} can be written as
\begin{equation}\label{3.7}
\begin{split}
\Bigl(uI+v\left[\begin{matrix}0&1\cr -b_0&-b_1\end{matrix}\right]\Bigr)\left[\begin{matrix}\alpha&\beta\cr \gamma&\delta\end{matrix}\right]=\;&
\left[\begin{matrix}\alpha'&\beta'\cr \gamma'&\delta'\end{matrix}\right] \Bigl(uI+v\left[\begin{matrix}0&1\cr -b_0&-b_1\end{matrix}\right]\Bigr)\cr
&+\left[\begin{matrix}-d&\frac c{b_0}-d\frac{b_1}{b_0}\cr c&d\end{matrix}\right],\quad c,d\in\Bbb Z_p.
\end{split}
\end{equation}
Thus $A(f,\alpha,\beta,\gamma,\delta)$ and $A(f,\alpha',\beta',\gamma',\delta')$ are conjugate if and only if there exist $0\le u,v\le v<p$, $(u,v)\ne(0,0)$, and $c,d\in\Bbb Z_p$ such that
\begin{equation}\label{3.8}
\Bigl(uI+v\left[\begin{matrix}0& \kern-1mm 1\cr -b_0&\kern-2mm -b_1\end{matrix}\right]\Bigr)\Bigl(\left[\begin{matrix}\alpha&\beta\cr \gamma&\delta\end{matrix}\right]-
\left[\begin{matrix}-d&\frac c{b_0}-d\frac{b_1}{b_0}\cr c&d\end{matrix}\right]\Bigr)\Bigl(uI+v\left[\begin{matrix}0&\kern-1mm 1\cr -b_0&\kern-2mm -b_1\end{matrix}\right]\Bigr)^{-1}=
\left[\begin{matrix}\alpha'&\beta'\cr \gamma'&\delta'\end{matrix}\right].
\end{equation}

For $A=\left[\begin{smallmatrix}\alpha&\beta\cr \gamma&\delta\end{smallmatrix}\right],\ A'=\left[\begin{smallmatrix}\alpha'&\beta'\cr \gamma'&\delta'\end{smallmatrix}\right]
\in\text{M}_{2\times 2}(\Bbb Z_p)$, say $A\sim A'$ if \eqref{3.8} is satisfied for some $0\le u,v<p$, $(u,v)\ne (0,0)$, and $c,d\in\Bbb Z_p$. It remains to show that the $\sim$ equivalence classes in $\text{M}_{2\times 2}(\Bbb Z_p)$ are represented by 
\[
\left[\begin{matrix}\alpha&\beta\cr 0&0\end{matrix}\right],\quad \alpha,\beta\in\Bbb Z_p.
\]

First, choose $u=1$, $v=0$, $c=-\gamma$, $d=-\delta$. Then the left side of \eqref{3.8} becomes $\left[\begin{smallmatrix}\alpha'&\beta'\cr 0&0\end{smallmatrix}\right]$ for some $\alpha',\beta'\in\Bbb Z_p$. 

Next, assume that \eqref{3.8} holds with $\gamma=\delta=\gamma'=\delta'=0$. We want to show that $(\alpha,\beta)=(\alpha',\beta')$. Taking the traces of the two sides of \eqref{3.8}, we have $\alpha=\alpha'$. Now \eqref{3.7} with $\alpha=\alpha'$ and $\gamma=\delta=\gamma'=\delta'=0$ gives 
\[
\left[\begin{matrix}u\alpha&u\beta\cr -vb_0\alpha&-vb_0\beta\end{matrix}\right] =\left[\begin{matrix}\alpha u-\beta'vb_0&\alpha v-\beta'(u-vb_1)\cr 0&0\end{matrix}\right]
+\left[\begin{matrix}-d&\frac c{b_0}-d\frac{b_1}{b_0}\cr c&d\end{matrix}\right].
\]
Taking the traces, we have $vb_0\beta=vb_0\beta'$. If $v\ne 0$, then $\beta=\beta'$. If $v=0$, then $c=d=0$, which implies $u\beta=u\beta'$. Since $u\ne 0$, we also have $\beta=\beta'$.
\end{proof}

\subsection{$(M,+)=\Bbb Z_{p^2}\times\Bbb Z_p^2$}\

In this case $\deg f=1$.

\begin{thm}\label{T3.3}
Assume $(M,+)=\Bbb Z_{p^2}\times \Bbb Z_p^2$ and $f=X-a$, $a\in\Bbb Z_p^\times$. 
\begin{itemize}
  \item [(i)] The conjugacy classes in $\text{\rm GL}(M)_f^{(1)(1,1)}$ are represented by the following matrices:
  \medskip
  \begin{itemize}  
    \item [(i.1)] $\left[\begin{matrix} b\cr &b\cr &&b\end{matrix}\right]+ \left[\begin{matrix} p\alpha&0&0\cr 0&0&0\cr 0&0&0\end{matrix}\right],\quad 0<b<p,\ b\equiv a\pmod p,\ \alpha\in\Bbb Z_p$.
    \medskip
    \item [(i.2)] $\left[\begin{matrix} b\cr &b\cr &&b\end{matrix}\right]+ \left[\begin{matrix} 0&0&0\cr 1&0&0\cr 0&0&0\end{matrix}\right],\quad 0<b<p,\ b\equiv a\pmod p$.
    \medskip
    \item [(i.3)] $\left[\begin{matrix} b\cr &b\cr &&b\end{matrix}\right]+ \left[\begin{matrix} 0&p&0\cr 0&0&0\cr \eta&0&0\end{matrix}\right],\quad 0<b<p,\ b\equiv a\pmod p,\ \eta\in\Bbb Z_p$.
    \medskip
    \item [(i.4)] $\left[\begin{matrix} b\cr &b\cr &&b\end{matrix}\right]+ \left[\begin{matrix} 0&p&0\cr 1&0&0\cr 0&0&0\end{matrix}\right],\quad 0<b<p,\ b\equiv a\pmod p$.
\end{itemize}
\item [(ii)] The conjugacy classes in $\text{\rm GL}(M)_f^{(1)(2)}$ are represented by the following matrices:
\medskip
  \begin{itemize}    
  \item [(ii.1)] $\left[\begin{matrix} b\cr &b&1\cr &&b\end{matrix}\right]+ \left[\begin{matrix} p\alpha&0&0\cr 0&0&0\cr 0&0&0\end{matrix}\right],\quad 0<b<p,\ b\equiv a\pmod p,\ \alpha\in\Bbb Z_p$. 
  \medskip
  \item [(ii.2)] $\left[\begin{matrix} b\cr &b&1\cr &&b\end{matrix}\right]+ \left[\begin{matrix} 0&0&0\cr 0&0&0\cr 1&0&0\end{matrix}\right],\quad 0<b<p,\ b\equiv a\pmod p$.
  \medskip
  \item [(ii.3)] $\left[\begin{matrix} b\cr &b&1\cr &&b\end{matrix}\right]+ \left[\begin{matrix} 0&p&0\cr 0&0&0\cr \eta&0&0\end{matrix}\right],\quad 0<b<p,\ b\equiv a\pmod p,\ \eta\in\Bbb Z_p$.
\end{itemize}
\end{itemize}
\end{thm}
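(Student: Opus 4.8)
The plan is to follow the template already established in Theorems~\ref{T3.1} and \ref{T3.2}: fix a normal form for the reduction $\overline A$, write a general conjugator $P$ in the block shape \eqref{3.1}, convert the matrix equation $PA=A'P$ into a linear system over $\Bbb Z_p$ for the ``higher'' parameters, and then read off orbit representatives. Here $M=\Bbb Z_{p^2}\times\Bbb Z_p^2$, so in the notation of \eqref{3.1} we have $e_1=2,n_1=1,e_2=1,n_2=2$, and an element of $\text{GL}(M)_f$ has the form $A=\left[\begin{smallmatrix}A_{11}&pA_{12}\\ A_{21}&A_{22}\end{smallmatrix}\right]$ with $A_{11}=b+p\alpha\in\Bbb Z_{p^2}$ ($0<b<p$, $b\equiv a$), $A_{12}\in\text{M}_{1\times2}(\Bbb Z_p)$, $A_{21}\in\text{M}_{2\times1}(\Bbb Z_p)$, and $A_{22}\in\text{M}_{2\times2}(\Bbb Z_p)$. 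Since $A_{22}=\overline{A_{22}}$ already lives over the field $\Bbb Z_p$, the superscript condition pins it down up to $\text{GL}_2(\Bbb Z_p)$-conjugacy: in case (i) the elementary divisors $f,f$ force $A_{22}=bI_2$, while in case (ii) the single divisor $f^2$ makes $A_{22}$ conjugate to the Jordan block $\left[\begin{smallmatrix}b&1\\0&b\end{smallmatrix}\right]$. Conjugating by a block-diagonal $\mathrm{diag}(1,S)\in\text{GL}(M)$ I may assume $A_{22}$ is in exactly the normalized shape appearing in the stated representatives.

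Next I would set up the conjugacy equation. Writing $P=\left[\begin{smallmatrix}x&pY\\ Z&W\end{smallmatrix}\right]\in\text{GL}(M)$ with $x\in\Bbb Z_{p^2}^\times$, $Y\in\text{M}_{1\times2}(\Bbb Z_p)$, $Z\in\text{M}_{2\times1}(\Bbb Z_p)$, $W\in\text{GL}_2(\Bbb Z_p)$, I expand $PA=A'P$ blockwise, keeping careful track of the ring in which each block lives (the $(1,2)$-block lies in $p\Bbb Z_{p^2}$, so its coefficient is only seen mod $p$; the remaining blocks reduce mod $p$). The scalar (resp.\ Jordan) part of $A_{22}$ cancels and leaves a system over $\Bbb Z_p$. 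In case (i), where $A_{22}=bI_2$ commutes with everything, $W$ is unconstrained and the two off-diagonal blocks decouple the data $A_{12},A_{21}$ from $\alpha$, giving $WA_{21}=\bar x A_{21}'$ and $\bar x A_{12}=A_{12}'W$, while the $(1,1)$-block records how $\alpha$ transforms in terms of $Y,Z$. In case (ii), reducing mod $p$ \emph{first} forces $\overline P$ to intertwine the Jordan structure, which restricts $W$ to the (abelian, upper-triangular Toeplitz) commutant of $\left[\begin{smallmatrix}b&1\\0&b\end{smallmatrix}\right]$; this is precisely the step in the proof of Theorem~\ref{T3.2}(ii) where $\overline P$ was forced into the shape $\left[\begin{smallmatrix}c&d\\0&c\end{smallmatrix}\right]$, and it is this reduced symmetry that allows a parameter such as $\eta$ to survive as an invariant.

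Finally I would classify orbits by case-splitting on whether $A_{12}$ and $A_{21}$ vanish. When both vanish, the $(1,1)$-equation makes $\alpha$ a genuine invariant, yielding the families (i.1) and (ii.1). When at least one is nonzero, the $(1,1)$-equation lets me choose $Y$ or $Z$ to kill $\alpha$, and the remaining task is to classify the pair $(A_{12},A_{21})$ under the available $W$-action. In case (i) this is $\text{GL}_2(\Bbb Z_p)$ acting on a (covector, vector) pair, which I would encode through the conjugation action $A_{21}A_{12}\mapsto W(A_{21}A_{12})W^{-1}$ on the rank-one matrix $A_{21}A_{12}$, reducing everything to the rational canonical form of a rank-$\le 1$ matrix; this produces the representatives (i.2)--(i.4). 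In case (ii) the same pair must be classified under the much smaller commutant of the Jordan block, and one must also track the mod-$p$ Jordan type of $\overline A$, which jumps from type $(2,1)$ to $(3)$ according to how $A_{21}$ meets the Jordan block; this yields (ii.2)--(ii.3). I expect the orbit computation of the off-diagonal data to be the main obstacle, with case (ii) the delicate one: the broken symmetry and the coupling between $A_{21}$ and the Jordan block are what create the surviving one-parameter family, and distinguishing the resulting classes requires working at the $\Bbb Z_{p^2}$ level rather than merely modulo $p$.
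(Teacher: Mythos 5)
Your setup is the paper's: the same block shapes for $A$ and for the conjugator $P=\left[\begin{smallmatrix}x&pY\cr Z&W\end{smallmatrix}\right]$, the same reduction to a $\Bbb Z_p$-linear system, in case (i) the decoupled equations $xA_{12}=A_{12}'W$, $WA_{21}=xA_{21}'$, $x\alpha+YA_{21}=\alpha'x+A_{12}'Z$, and in case (ii) the restriction of $W$ to the commutant $\left[\begin{smallmatrix}c&d\cr 0&c\end{smallmatrix}\right]$. The one genuinely different step is finishing case (i) via the conjugation class of $A_{21}A_{12}$, where the paper merely asserts its list; the idea is fine once you observe that the product vanishes on three distinct strata (your case split handles this) and once you actually prove it is a complete invariant on the stratum where both blocks are nonzero. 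But follow it through: there the invariant is $\operatorname{tr}(A_{21}A_{12})=A_{12}A_{21}=\beta\delta+\gamma\eta$, which takes all $p$ values on $p$ distinct classes, and the printed list does not realize this. Every matrix in (i.3) has invariant $0$ --- indeed (i.3) with $\eta\ne 0$ and with $\eta'\ne 0$ are conjugate by $\operatorname{diag}(1,1,\eta^{-1}\eta')\in\text{GL}(M)$ --- while the classes of invariant $c\ne 0,1$, represented by $bI+\left[\begin{smallmatrix}0&p&0\cr c&0&0\cr 0&0&0\end{smallmatrix}\right]$, occur nowhere in the list (they are pairwise nonconjugate by Krull--Schmidt together with Theorem~\ref{T3.1}(iii)). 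So your method, executed honestly, produces (i.3) and (i.4) with the two entries of the lower-left column interchanged; it does not ``produce the representatives (i.2)--(i.4)'' as printed, and the proposal papers over that discrepancy. (The total count $2p+2$ is unaffected, and nothing goes wrong when $p=2$, which is presumably why the slip is invisible in the numerical checks.)

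Case (ii) is where the sketch is genuinely incomplete. First, your claim that $\alpha$ can be normalized away whenever $(A_{12},A_{21})\ne(0,0)$ fails there: forcing $\gamma'=\delta'=0$ consumes the parameters $y_1,z_2$ that were free in case (i), and when $\beta=\eta=0$ the surviving invariant is $\alpha+\gamma\delta$ (cf.\ \eqref{3.15}); family (ii.1) therefore absorbs all elements with $\beta=\eta=0$, not only those with $A_{12}=A_{21}=0$. Second, the invariant that separates the $p$ classes in (ii.3) is the product $\beta\eta$ of the component of $A_{12}$ pairing with the socle of the Jordan block and the component of $A_{21}$ along its generator; this survives precisely because $W$ is confined to the commutant. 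The mod-$p$ Jordan type of $\overline A$ that you propose to track only detects whether $\eta=0$, so it cannot distinguish the $p$ members of family (ii.3) from one another. Both facts fall out once the linear system is written explicitly (the analogue of \eqref{3.13}), but your proposal neither writes it nor identifies these invariants, so the orbit count and the separation of classes in part (ii) are not established.
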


\begin{proof}
(i) Elements of $\text{GL}(M)_f^{(1)(1,1)}$ are of the form
\[
A(b,\alpha,\dots,\eta):= \left[\begin{matrix} b\cr &b\cr &&b\end{matrix}\right]+ \left[\begin{matrix} p\alpha&p\beta&p\gamma\cr \delta&0&0\cr \eta&0&0\end{matrix}\right],\quad 0<b<p,\ b\equiv a\pmod p,\ \alpha,\dots,\eta\in\Bbb Z_p.
\]
Assume that $P\in\text{GL}(M)$ such that
\begin{equation}\label{3.9}
PA(b,\alpha,\dots,\eta)=A(b,\alpha',\dots,\eta')P.
\end{equation}
Write
\[
P=\left[\begin{matrix} x&pY\cr Z&W\end{matrix}\right],
\]
where $x\in\Bbb Z_{p^2}^\times$, $Y\in\text{M}_{1\times 2}(\Bbb Z_p)$, $Z\in\text{M}_{2\times 1}(\Bbb Z_p)$, $W\in\text{GL}(2,\Bbb Z_p)$. Then \eqref{3.9} becomes
\[
\left[
\begin{matrix}
px\alpha+pY\left[\begin{matrix} \delta \cr \eta\end{matrix}\right]& px[\beta, \gamma] \vspace{2mm}\cr W\left[\begin{matrix} \delta\cr \eta\end{matrix}\right]&0\end{matrix}\right]=
\left[
\begin{matrix}
p\alpha'x+p[\beta',\gamma']z& p[\beta', \gamma']W \vspace{2mm} \cr \left[\begin{matrix} \delta'\cr \eta'\end{matrix}\right]x&0\end{matrix}\right].
\]
Over $\Bbb Z_p$, this becomes
\[
\left[
\begin{matrix}
x\alpha+Y\left[\begin{matrix} \delta\cr \eta\end{matrix}\right]& x[\beta, \gamma] \vspace{2mm}\cr W\left[\begin{matrix} \delta\cr \eta\end{matrix}\right]&0\end{matrix}\right]=
\left[
\begin{matrix}
\alpha'x+[\beta',\gamma']z& [\beta', \gamma']W \vspace{2mm}\cr \left[\begin{matrix} \delta'\cr \eta'\end{matrix}\right]x&0\end{matrix}\right].
\]
It is more convenient to write the above equation as 
\[
\left[\begin{matrix} x&Y\cr 0&W\end{matrix}\right]\left[\begin{matrix}[\beta,\gamma]&\alpha\cr 0&\left[\begin{matrix} \delta\cr \eta\end{matrix}\right]\end{matrix}\right]
=\left[\begin{matrix}[\beta',\gamma']&\alpha'\cr 0&\left[\begin{matrix} \delta'\cr \eta'\end{matrix}\right]\end{matrix}\right]\left[\begin{matrix} W&Z\cr 0&x\end{matrix}\right].
\]
So, $A(b,\alpha,\dots,\eta)$ and $A(b,\alpha',\dots,\eta')$ are conjugate if and only if 
\begin{equation}\label{3.10}
\left[\begin{matrix} x&Y\cr 0&W\end{matrix}\right]\left[\begin{matrix}[\beta,\gamma]&\alpha\cr 0&\left[\begin{matrix} \delta\cr \eta\end{matrix}\right]\end{matrix}\right]
\left[\begin{matrix} W^{-1}&Z\cr 0&x^{-1}\end{matrix}\right]=\left[\begin{matrix}[\beta',\gamma']&\alpha'\cr 0&\left[\begin{matrix} \delta'\cr \eta'\end{matrix}\right]\end{matrix}\right]
\end{equation}
for some $x\in\Bbb Z_p^\times$, $W\in\text{GL}(2,\Bbb Z_p)$, $Y\in\text{M}_{1\times 2}(\Bbb Z_p)$, $Z\in\text{M}_{2\times 1}(\Bbb Z_p)$.

Let 
\[
\mathcal M=\Bigl\{
\left[\begin{matrix}[\beta,\gamma]&\alpha\cr 0&\left[\begin{matrix} \delta\cr \eta\end{matrix}\right]\end{matrix}\right]:\alpha,\dots,\eta\in\Bbb Z_p\Bigr\}.
\]
For 
\[
A=\left[\begin{matrix}[\beta,\gamma]&\alpha\cr 0&\left[\begin{matrix} \delta\cr \eta\end{matrix}\right]\end{matrix}\right],\quad 
A'=\left[\begin{matrix}[\beta',\gamma']&\alpha'\cr 0&\left[\begin{matrix} \delta'\cr \eta'\end{matrix}\right]\end{matrix}\right]\in\mathcal M,
\]
say $A\sim A'$ if \eqref{3.10} is satisfied. It is easy to see that the $\sim$ equivalence classes in $\mathcal M$ are represented by
\begin{itemize}
  \item [(1)] $\left[\begin{matrix}[0\ 0]&\alpha\cr &\left[\begin{matrix} 0\cr 0\end{matrix}\right]\end{matrix}\right],\quad \alpha\in\Bbb Z_p$,
  \medskip
  \item [(2)] $\left[\begin{matrix}[0\ 0]\cr &\left[\begin{matrix} 1\cr 0\end{matrix}\right]\end{matrix}\right]$,
  \medskip
  \item [(3)] $\left[\begin{matrix}[1\ 0]\cr &\left[\begin{matrix} 0\cr \eta\end{matrix}\right]\end{matrix}\right],\quad \eta\in\Bbb Z_p$,
  \medskip
  \item [(4)] $\left[\begin{matrix}[1\ 0]\cr &\left[\begin{matrix} 1\cr 0\end{matrix}\right]\end{matrix}\right]$.
\end{itemize}  
These correspond to the matrices in (i.1) -- (i.4).

(ii) Elements of $\text{GL}(M)_f^{(1)(1,1)}$ are conjugate for matrices of the form 
\[
A(b,\alpha,\dots,\eta):=\!\! \left[\begin{matrix} b\cr &\kern-1mm  b&\kern-1mm  1\cr &&\kern-1mm  b\end{matrix}\right]+ \left[\begin{matrix} p\alpha&\kern-1mm p\beta& \kern-1mm p\gamma\cr \delta&\!0&\!0\cr \eta&\!0&\!0\end{matrix}\right],\ 0<b<p,\ b\equiv a
\ (\text{mod}\, p),\ \alpha,\dots,\eta\in\Bbb Z_p.
\]
Assume that $P\in\text{GL}(M)$ such that
\begin{equation}\label{3.11}
PA(b,\alpha,\dots,\eta)=A(b,\alpha',\dots,\eta')P.
\end{equation}
Write
\[
P=\left[\begin{matrix} x&pY\cr Z&W\end{matrix}\right],
\]
where $x\in\Bbb Z_{p^2}^\times$, $Y\in\text{M}_{1\times 2}(\Bbb Z_p)$, $Z\in\text{M}_{2\times 1}(\Bbb Z_p)$, $W\in\text{GL}(2,\Bbb Z_p)$.
Since $W$ commutes with $\left[\begin{smallmatrix} b&1\cr &b\end{smallmatrix}\right]$, we have $W=\left[\begin{smallmatrix} c&d\cr 0&c\end{smallmatrix}\right]$.
Equation~\eqref{3.11} is equivalent to
\[
\begin{split}
&\left[\begin{matrix}
0&pY\left[\begin{matrix} 0&1\cr &0\end{matrix}\right] \vspace{2mm}\cr
0&W\left[\begin{matrix} 0&1\cr &0\end{matrix}\right]
\end{matrix}\right]+\left[
\begin{matrix}
px\alpha+pY\left[\begin{matrix}\delta\cr \eta\end{matrix}\right]& px[\beta,\gamma]\vspace{2mm} \cr
W\left[\begin{matrix}\delta\cr \eta\end{matrix}\right]&0\end{matrix}\right]  \cr
=\;&\left[
\begin{matrix}
0&0 \vspace{2mm} \cr
\left[\begin{matrix} 0&1\cr &0\end{matrix}\right]Z& \left[\begin{matrix} 0&1\cr &0\end{matrix}\right]W\end{matrix}\right]+\left[
\begin{matrix}
p\alpha'x+p[\beta',\gamma']Z& p[\beta',\gamma']W \vspace{2mm} \cr
\left[\begin{matrix}\delta'\cr \eta'\end{matrix}\right]x&0\end{matrix}\right].
\end{split}
\]
Over $\Bbb Z_p$, this becomes
\[
\left[\begin{matrix}
0& \kern-1mm  Y\!\left[\begin{matrix} 0&\kern-1mm 1\cr &\kern-1mm  0\end{matrix}\right] \vspace{2mm} \cr
0& 0
\end{matrix}\right]+\left[
\begin{matrix}
x\alpha\!+\!Y\!\left[\begin{matrix}\delta\cr \eta\end{matrix}\right]& \kern-1mm  x[\beta,\gamma] \vspace{2mm} \cr
W\left[\begin{matrix}\delta\cr \eta\end{matrix}\right]&0\end{matrix}\right] 
\!=\!
\left[
\begin{matrix}
0& \kern-1mm  0 \vspace{2mm} \cr
\left[\begin{matrix} 0&\kern-1mm  1\cr &\kern-1mm  0\end{matrix}\right]\!Z& \kern-1mm  0\end{matrix}\right]+\left[
\begin{matrix}
\alpha'x\!+\![\beta',\gamma']Z&\kern-1mm  [\beta',\gamma']W \vspace{2mm} \cr
\left[\begin{matrix}\delta'\cr \eta'\end{matrix}\right]x&0\end{matrix}\right],
\]
which can be written as 
\[
\left[\begin{matrix} x&Y\cr 0&W\end{matrix}\right]\left[\begin{matrix}[\beta,\gamma]&\alpha\vspace{1mm} \cr 0&\left[\begin{matrix}\delta\cr \eta\end{matrix}\right]\end{matrix}\right]
+\left[\begin{matrix}Y\left[\begin{matrix} 0&1\cr &0\end{matrix}\right]&0 \vspace{2mm} \cr 0&-\left[\begin{matrix} 0&1\cr &0\end{matrix}\right]Z\end{matrix}\right]=
\left[\begin{matrix}[\beta',\gamma']&\alpha' \vspace{1mm} \cr 0&\left[\begin{matrix}\delta'\cr \eta'\end{matrix}\right]\end{matrix}\right]
\left[\begin{matrix} W&Z\cr 0&x\end{matrix}\right].
\]
So, $A(b,\alpha,\dots,\eta)$ and $A(b,\alpha',\dots,\eta')$ are conjugate if and only if 
\begin{equation}\label{3.12}
\left[\begin{matrix} x&\kern-1mm Y\cr 0&\kern-1mm W\end{matrix}\right]\!\!
\left[\begin{matrix}[\beta,\gamma]&\kern-1.5mm  \alpha \vspace{1mm} \cr 0&\kern-1.5mm  \left[\begin{matrix}\delta\cr \eta\end{matrix}\right]\end{matrix}\right]\!\!
\left[\begin{matrix} W^{-1}&\kern-1.5mm Z \vspace{1mm} \cr 0&\kern-1.5mm x^{-1}\end{matrix}\right]+\left[
\begin{matrix}
Y\!\left[\begin{matrix} 0&\kern-1mm 1\cr &\kern-1mm  0\end{matrix}\right]\! W^{-1}& Y\left[\begin{matrix} 0&1\cr &0\end{matrix}\right]Z \vspace{2mm} \cr
0&\left[\begin{matrix} 0&\kern-1mm 1\cr &\kern-1mm 0\end{matrix}\right]WZ\end{matrix}\right]\!=\!\left[\begin{matrix}[\beta',\gamma']&\kern-1.5mm \alpha' \vspace{1mm} \cr 0&\kern-1.5mm \left[\begin{matrix}\delta'\cr \eta'\end{matrix}\right]\end{matrix}\right]
\end{equation}
for some $x\in\Bbb Z_p^\times$, $Y\in\text{M}_{1\times 2}(\Bbb Z_p)$, $Z\in\text{M}_{2\times 1}(\Bbb Z_p)$, $W=\left[\begin{smallmatrix} c&d\cr &c\end{smallmatrix}\right]\in\text{GL}(2,\Bbb Z_p)$.

Let
\[
\mathcal M=\Bigl\{ \left[\begin{matrix}[\beta,\gamma]&\alpha\vspace{1mm} \cr 0&\left[\begin{matrix}\delta\cr \eta\end{matrix}\right]\end{matrix}\right]:\alpha,\dots,\eta\in\Bbb Z_p\Bigr\}.
\]
For 
\[
A=\left[\begin{matrix}[\beta,\gamma]&\alpha \vspace{1mm} \cr 0&\left[\begin{matrix}\delta\cr \eta\end{matrix}\right]\end{matrix}\right],\quad
A'=\left[\begin{matrix}[\beta',\gamma']&\alpha' \vspace{1mm} \cr 0&\left[\begin{matrix}\delta'\cr \eta'\end{matrix}\right]\end{matrix}\right]\in\mathcal M,
\]
say $A\sim A'$ if \eqref{3.12} is satisfied. It remains to determine the representatives of the $\sim$~equivalence classes in $\mathcal M$.

In \eqref{3.12}, we may assume $W=\left[\begin{smallmatrix} 1&d\cr &1\end{smallmatrix}\right]$ by replacing $x,Y,W,Z$ with  
$\frac 1c x, \frac 1c Y, \frac 1c W,cZ$, respectively. Let $Y=[y_1,y_2]$, $Z=\left[\begin{smallmatrix} z_1\cr z_2\end{smallmatrix}\right]$. Then \eqref{3.12} becomes
\begin{equation}\label{3.13}
\left[
\begin{matrix}
x\beta& -x\beta d\!+\!x\gamma\!+\!y_1& \alpha+x\beta z_1+x\gamma z_2\!+\!y_1\delta x^{-1}\!+\!y_2\eta x^{-1}\!+\!y_1z_2\cr
0&0&\delta x^{-1}+d\eta x^{-1}+z_2\cr
0&0& \eta x^{-1}\end{matrix}\right]\!=\!\left[
\begin{matrix}
\beta'&\kern-1mm \gamma'&\kern-1mm \alpha'\cr
0&\kern-1mm 0&\kern-1mm \delta'\cr
0&\kern-1mm 0&\kern-1mm \eta'\end{matrix}\right].
\end{equation}

We claim that the $\sim$ equivalence classes in $\mathcal M$ are represented by
\begin{itemize}
  \item [(1)] $\left[\begin{matrix}[0\ 0]&\alpha\cr &\left[\begin{matrix}0\cr 0\end{matrix}\right]\end{matrix}\right],\quad \alpha\in\Bbb Z_p$,
  \medskip
  \item [(2)] $\left[\begin{matrix}[0\ 0]&0\cr &\left[\begin{matrix}0\cr 1\end{matrix}\right]\end{matrix}\right]$,
  \medskip
  \item [(3)] $\left[\begin{matrix}[1\ 0]&0\cr &\left[\begin{matrix}0\cr \eta\end{matrix}\right]\end{matrix}\right],\quad \eta\in\Bbb Z_p$.
\end{itemize} 
The proof of (ii) will be complete when this claim is proved.

First, it is clear that matrices from different families of (1) -- (3) are not $\sim$~equivalent. So it remains to show that every $A\in\mathcal M$ can be brought into one of the ``canonical forms'' in (1) -- (3) through $\sim$ equivalence, and the entries in the canonical form are uniquely determined by $A$.

Let
\[
A=\left[\begin{matrix}[\beta,\gamma]&\alpha\cr &\left[\begin{matrix}\delta\cr \eta\end{matrix}\right]\end{matrix}\right]\in\mathcal M.
\]

First assume $\beta=0$ and $\eta=0$. Then
\[
\text{LHS of \eqref{3.13}}=\left[\begin{matrix}[0\ 0]&\alpha'\cr 0&\left[\begin{matrix}0\cr 0\end{matrix}\right]\end{matrix}\right]
\]
if and only if
\begin{equation}\label{3.14}
\begin{cases}
x\gamma+y_1=0,\cr
\delta x^{-1}+z_2=0,
\end{cases}
\end{equation}
and
\begin{equation}\label{3.15}
\alpha'=\alpha+\gamma\delta.
\end{equation}
System \eqref{3.14} has a solution $(x,y_1,z_2)=(1,-\gamma,-\delta)$. Equation~\eqref{3.15} shows that $\alpha'$ is uniquely determined by $A$.

Next, assume $\beta=0$ and $\eta\ne 0$. Then
\[
\text{LHS of \eqref{3.13}}=\left[\begin{matrix}[0\ 0]&0\cr &\left[\begin{matrix}0\cr 1\end{matrix}\right]\end{matrix}\right]
\]
if and only if 
\begin{equation}\label{3.16}
\begin{cases}
x\gamma+y_1=0,\cr
\alpha+x\gamma z_2+y_1\delta x^{-1}+y_2\eta x^{-1}+y_1 z_2=0,\cr
\delta x^{-1}+d\eta x^{-1}+z_2=0,\cr
\eta x^{-1}=1.
\end{cases}
\end{equation}
System \eqref{3.16} has a solution $(d,x,y_1,y_2,z_2)$. (Let $d$ be arbitrary. Solve for $x$ form the last equation, $y_1$ from the first equation, $z_2$ from the third equation, and $y_2$ from the second equation.) 

Finally, assume $\beta\ne 0$. Then  
\[
\text{LHS of \eqref{3.13}}=\left[\begin{matrix}[1\ 0]&0\cr &\left[\begin{matrix}0\cr \eta'\end{matrix}\right]\end{matrix}\right]
\]
if and only if 
\begin{equation}\label{3.17}
\begin{cases}
x\beta=1,\cr
-x\beta d+x\gamma+y_1=0,\cr
\alpha+x\beta z_1+x\gamma z_2+y_1\delta x^{-1}+y_2\eta x^{-1}+y_1z_2=0,\cr
\delta x^{-1}+d\eta x^{-1}+z_2=0,
\end{cases}
\end{equation}and
\begin{equation}\label{3.18}
\eta'=\eta\beta.
\end{equation}
System~\eqref{3.17} has a solution $(d,x,y_1,y_2,z_1,z_2)$. (Let $y_1$ and $y_2$ be arbitrary. Solve for $x$ from the first equation, $d$ from the second equation, $z_2$ from the last equation, and $z_1$ from the third equation.) Equation~\eqref{3.18} shows that $\eta'$ is uniquely determined by $A$.
\end{proof}

\subsection{Classification of $\Lambda$-modules of order $p^n$, $n\le 4$}\

The classification of $\Lambda$-modules of order $p^n$, $n\le 4$, is obtained by combining the results in 3.2 -- 3.6 and using Theorem~\ref{T2.1}. A complete description of the classification is given in Table~\ref{Tb1} in the appendix. From Table~\ref{Tb1} we find that the number of nonisomorphic $\Lambda$-modules of order $p^n$ is
\begin{equation}\label{3.19}
\begin{cases}
1&\text{if}\ n=0,\cr 
p-1&\text{if}\ n=1,\cr
2p^2-p-1&\text{if}\ n=2,\cr
3p^3-2p^2-1&\text{if}\ n=3,\cr
5p^4-2p^3-2p-1&\text{if}\ n=4.
\end{cases}
\end{equation}

%%%%%%%%%%%%%%%%%%%%%%%%%%%%%%%%%%%%%%%%
%   section 4
%%%%%%%%%%%%%%%%%%%%%%%%%%%%%%%%%%%%%%%%

\section{Finite Alexander Quandles}

By Theorem~\ref{T1.1}, the classification of finite Alexander quandles $M$ is the same as the classification of finite $\Lambda$-modules of the form $(1-t)M$.
First, the following question has to be answered: Given a finite $\Lambda$-module $N$ and an integer $l>0$, does there exist a $\Lambda$-module $M\supset N$ such that
$(1-t)M=N$ and $|M/N|=l$? Assume that such an $M$ exists. Since
\[
M/(1-t)M\overset{1-t}{\longrightarrow} (1-t)M/(1-t)^2M=N/(1-t)N
\]
is an onto $\Lambda$-map, we have $|N/(1-t)N|\bigm | l$. We will see in Theorem~\ref{T4.3} that $|N/(1-t)N|\bigm | l$ is also a sufficient condition for the existence of $M$.

\begin{lem}\label{L4.1}
Let $N\subset M$ be abelian groups and let $\alpha:N\to N$ and $\overline\alpha:M\to N$ be $\Bbb Z$-maps such that $\overline\alpha|_N=\alpha$. If $1-\alpha\in\text{\rm Aut}(N)$, then $1-\overline\alpha\in\text{\rm Aut}(M)$.
\end{lem}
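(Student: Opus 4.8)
The plan is to verify directly that $1-\overline\alpha$ is a bijective endomorphism of $M$. The first thing I would check is that $1-\overline\alpha$ really does map $M$ into $M$: for $x\in M$ we have $\overline\alpha(x)\in N\subset M$, so $(1-\overline\alpha)(x)=x-\overline\alpha(x)$ lies in $M$, and additivity is clear since both $1$ and $\overline\alpha$ are $\Bbb Z$-maps. Thus it suffices to establish injectivity and surjectivity, and in both steps the essential leverage is that the image of $\overline\alpha$ always lands inside $N$, where the hypothesis on $1-\alpha$ applies.

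For injectivity I would take $x$ in the kernel, so $x=\overline\alpha(x)$. The right-hand side lies in $N$, hence $x\in N$; but on $N$ the map $\overline\alpha$ restricts to $\alpha$, so $x=\alpha(x)$, i.e. $(1-\alpha)(x)=0$. Since $1-\alpha\in\text{Aut}(N)$ is injective, this forces $x=0$.

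For surjectivity, given $y\in M$ I would look for a preimage of the special form $y+n$ with $n\in N$ chosen to absorb the discrepancy. Because $1-\alpha$ is onto $N$ and $\overline\alpha(y)\in N$, I can pick $n\in N$ with $(1-\alpha)(n)=\overline\alpha(y)$. Using $\overline\alpha(n)=\alpha(n)$, a one-line computation gives $(1-\overline\alpha)(y+n)=y+n-\overline\alpha(y)-\alpha(n)=y+(n-\alpha(n))-\overline\alpha(y)=y$, so $y+n$ is the desired preimage.

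The argument is short and has no genuine obstacle; its entire content is the observation that $\operatorname{im}\overline\alpha\subseteq N$, which lets me reduce both the kernel computation and the correction step to the already-controlled behavior of $1-\alpha$ on $N$. The one point that requires a small idea rather than a mechanical check is the surjectivity step, where the correcting element $n$ must be produced; but its defining equation $(1-\alpha)(n)=\overline\alpha(y)$ is forced by the desired cancellation, so even this is routine once the ansatz $x=y+n$ is written down.
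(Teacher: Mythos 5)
Your proposal is correct and follows essentially the same route as the paper: both proofs rest on the observation that $\operatorname{im}\overline\alpha\subseteq N$, and your surjectivity step (solving $(1-\alpha)(n)=\overline\alpha(y)$ and checking $(1-\overline\alpha)(y+n)=y$) is identical to the paper's. The only cosmetic difference is in injectivity, where you note directly that $x=\overline\alpha(x)\in N$ and apply injectivity of $1-\alpha$ to $x$ itself, whereas the paper applies $\overline\alpha$ to the relation $x-\overline\alpha(x)=0$ and concludes $\overline\alpha(x)=0$ first; both are equally valid.
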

 
\begin{proof}
We first show that $1-\overline\alpha$ is 1-1. Let $x\in\ker(1-\overline\alpha)$. Then
\[
0=\overline\alpha(0)=\overline\alpha(x-\overline\alpha(x))=\overline\alpha(x)-\alpha(\overline\alpha(x))=(1-\alpha)(\overline\alpha(x)).
\]
Since $1-\alpha\in\text{Aut}(N)$, we have $\overline\alpha(x)=0$. Thus $x=[(1-\overline\alpha)+\overline\alpha](x)=0$.

Now we show that $1-\overline\alpha:M\to M$ is onto. Let $y\in M$. Since $\overline\alpha(y)\in N$ and $1-\alpha\in\text{Aut}(N)$, there exists $x\in N$ such that $(1-\alpha)(x)=\overline\alpha(y)$. Then $y=y-\overline\alpha(y)+x-\alpha(x)=(1-\overline\alpha)(y+x)$.
\end{proof}

\begin{thm}\label{T4.2}
Let $N$ be a finite abelian group and $\alpha\in\text{\rm End}_{\Bbb Z}(N)$. Then there exist a finite abelian group $M\supset N$ with $|M/N|=|N/\alpha(N)|$ and an onto homomorphism $\overline\alpha:M\to N$ such that $\overline\alpha|_N=\alpha$.
\end{thm}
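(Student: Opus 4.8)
The plan is to construct $M$ explicitly by adjoining to $N$ a finite set of new ``preimages'' that force the extended map to become surjective, while keeping careful track of orders so that $|M/N|$ comes out to be exactly $|N/\alpha(N)|$. Write $C=N/\alpha(N)$ and let $\pi\colon N\to C$ be the quotient map, so that $|C|=|N/\alpha(N)|$ is the target value of $|M/N|$. First I would fix a decomposition of $C$ into cyclic groups, $C=\bigoplus_{i=1}^r\langle\pi(n_i)\rangle$ with $\langle\pi(n_i)\rangle\cong\Bbb Z_{c_i}$, and choose lifts $n_i\in N$ of the chosen generators. Since $\pi(n_i)$ has order $c_i$ in $C$, we have $c_in_i\in\alpha(N)$, so we may pick $m_i\in N$ with $\alpha(m_i)=c_in_i$; these elements $m_i$ encode the only relations the new generators must satisfy.

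Next I would define $M$ and $\overline\alpha$ together. Let $F=\Bbb Z^r$ with standard basis $\mathbf e_1,\dots,\mathbf e_r$, let $S\subset N\oplus F$ be the subgroup generated by $(-m_i,c_i\mathbf e_i)$, $1\le i\le r$, and set $M=(N\oplus F)/S$. The map
\[
\phi\colon N\oplus F\to N,\qquad \phi\Bigl(x,\sum_i a_i\mathbf e_i\Bigr)=\alpha(x)+\sum_i a_in_i,
\]
kills each generator of $S$ because $\phi(-m_i,c_i\mathbf e_i)=-\alpha(m_i)+c_in_i=0$, so $\phi$ descends to a homomorphism $\overline\alpha\colon M\to N$. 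Identifying $N$ with the image of $N\oplus 0$, one checks $\overline\alpha|_N=\alpha$, and $\overline\alpha$ is onto because its image contains $\alpha(N)$ together with $n_1,\dots,n_r$, whose classes generate $C=N/\alpha(N)$.

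It then remains to verify the two order and injectivity claims, which I expect to be the only real point of the argument. For injectivity of $N\hookrightarrow M$ I would show $S\cap(N\oplus 0)=0$: an element $\sum_i a_i(-m_i,c_i\mathbf e_i)$ has $F$-component $\sum_i a_ic_i\mathbf e_i$, and since $F$ is free this vanishes only when every $a_ic_i=0$, i.e.\ every $a_i=0$ (as $c_i>0$), forcing the $N$-component to vanish as well. For the order, reducing modulo $N\oplus 0$ turns the generators of $S$ into $c_i\mathbf e_i$, so
\[
M/N\cong F/\langle c_1\mathbf e_1,\dots,c_r\mathbf e_r\rangle\cong\bigoplus_{i=1}^r\Bbb Z_{c_i}\cong C,
\]
whence $|M/N|=|C|=|N/\alpha(N)|$, as required.

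The heart of the matter is precisely this bookkeeping: using a free module $F$ for the adjoined generators guarantees that imposing the relations $c_ie_i=m_i$ neither collapses any part of $N$ nor over-enlarges the quotient, and choosing the $n_i$ along a cyclic decomposition of $C$ is what makes $M/N$ isomorphic to $C$ itself rather than to a proper quotient. Everything else — well-definedness of $\overline\alpha$, the extension property, and surjectivity — is formal once the relations are set up so that $c_in_i=\alpha(m_i)$.
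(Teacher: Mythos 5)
Your proof is correct, and it takes a genuinely different route from the paper's. The paper first reduces to the case where $N$ is a $p$-group, writes $|N/\alpha(N)|=p^k$, and then runs an induction of length $k$: at each step it enlarges $M_i$ by index $p$, which requires locating an element $a\in M_i\setminus pM_i$ with $\alpha_i(a)\in pN$ (via a counting argument on $M_i/pM_i\to N/pN$) and then splitting into cases according to whether the associated element $b$ already lies in $\alpha_i(M_i)$; a side effect is that $\operatorname{rank}M=\operatorname{rank}N$ throughout. You instead build $M$ in one shot as the quotient $(N\oplus\Bbb Z^r)/S$, adjoining a free generator for each cyclic factor of $C=N/\alpha(N)$ subject to the single relation $c_i\mathbf e_i\mapsto m_i$ forced by $\alpha(m_i)=c_in_i$; the freeness of $\Bbb Z^r$ makes the injectivity of $N\hookrightarrow M$ and the computation $M/N\cong\bigoplus_i\Bbb Z_{c_i}\cong C$ immediate, and surjectivity of $\overline\alpha$ is formal. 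Your construction avoids both the reduction to $p$-groups and the delicate inductive case analysis, at the cost of not controlling the rank of $M$ (e.g.\ $\alpha=0$ yields $M\cong N\oplus N$); since neither the statement of the theorem nor its applications in Theorem~\ref{T4.3} and Corollary~\ref{C4.4} require rank control, this is a harmless and arguably cleaner alternative.
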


\begin{proof}
We may assume that $N$ is a finite abelian $p$-group. If $\alpha(N)=N$, there is nothing to prove. So assume $\alpha(N)\ne N$. Let $|N/\alpha(N)|=p^k$ and $\text{rank}\,N=r$ (the number of cyclic summands in a decomposition of $N$). We will inductively construct finite abelian $p$-groups $N=M_0\subset M_1\subset\cdots\subset M_k$ and $\Bbb Z$-maps
$\alpha_i:M_i\to N$, $0\le i\le k$, such that $\alpha_0=\alpha$, $|M_{i+1}/M_i|=|\alpha_{i+1}(M_{i+1})/\alpha_i(M_i)|=p$, $\text{rank}\,M_i=r$, and $\alpha_{i+1}|_{M_i}=\alpha_i$. Then $M=M_k$ and $\overline\alpha=\alpha_k$ have the desired property.

%%%%%%%%%%%%%%%%%%%%%%%%%%%%%%%%%%%%%%
\[
\beginpicture
\setcoordinatesystem units <4mm,4mm> point at 0 0
\arrow <4pt> [0.3, 0.67] from 1 0 to 9 0
\arrow <4pt> [0.3, 0.67] from 1 5.5 to 9 0.5
\arrow <4pt> [0.3, 0.67] from 1 9 to 9 1
\arrow <4pt> [0.3, 0.67] from 0 1 to 0 2
\arrow <4pt> [0.3, 0.67] from 0 4 to 0 5
\arrow <4pt> [0.3, 0.67] from 0 7 to 0 9
\arrow <4pt> [0.3, 0.67] from 0 11 to 0 12
\arrow <4pt> [0.3, 0.67] from 0 14 to 0 15

\put {$ M_k$} at 0 16
\put {$ M_{i+1}$} at 0 10 
\put {$ M_i$} at 0 6
\put {$ N$} at 10 0
\put {$ N=M_0$} [r] at 0.6 0
\put {$\vdots$} at 0 3.2
\put {$\vdots$} at 0 13.2
\put {$\scriptstyle\subset$} [r] at -0.3 8
\put {$\scriptstyle\alpha$} [b] at 5 0.4
\put {$\scriptstyle\alpha_i$} [b] at 5 3.4
\put {$\scriptstyle\alpha_{i+1}$} [b] at 5 5.8
\endpicture
\] 

\bigskip
%%%%%%%%%%%%%%%%%%%%%%%%%%%%%%%%%%%%%%

Let $0\le i<k$ and assume that $M_i$ and $\alpha_i$ have been constructed. We now construct $M_{i+1}$ and $\alpha_{i+1}$.

We claim that the mapping $M_i/pM_i\to N/pN$ induced by $\alpha_i$ is not 1-1. Otherwise, since $|M_i/pM_i|=p^r=|N/pN|$, the mapping is also onto. Then for each $x\in N$, there exist $y_0\in M_i$ and $x_0\in N$ such that 
\[
x=\alpha_i(y_0)+px_0.
\]
In the same way, $x_0=\alpha_i(y_1)+px_1$ for some $y_1\in M_i$ and $x_1\in N$. Continuing this way, we can write
\[
x=\alpha_i(y_0+py_1+\cdots+p^ny_n)+p^{n+1}x_n,\quad y_0,\dots,y_n\in M_i,\ x_n\in N.
\]
Choose $n$ large enough such that $p^{n+1}x_n=0$. Then $x=\alpha_i(y_0+py_1+\cdots+p^ny_n)\in\alpha_i(M_i)$. So $N=\alpha_i(M_i)$, which is a contradiction since
$|\alpha_i(M_i)/\alpha(N)|=p^i<p^k=|N/\alpha(N)|$.

By the above claim, there exists $a\in M_i\setminus pM_i$ such that $\alpha_i(a)\in pN$. Write $\alpha_i(a)=pb$ for some $b\in N$.

{\bf Case 1.} Assume $b\notin\alpha_i(M_i)$. 
Write $M_i=\Bbb Z_{p^{e_1}}\times\cdots\times\Bbb Z_{p^{e_r}}$, $e_1\ge\cdots\ge e_r>0$. 
Since $a\in M_i\setminus pM_i$, we may assume $a=(pw,1,0)$, where $w\in\Bbb Z_{p^{e_1}}\times\cdots\times\Bbb Z_{p^{e_{s-1}}}$ for some $0\le s<r$.
Let $M_{i+1}=A\times\Bbb Z_{p^{e_s+1}}\times B$ where $A=\Bbb Z_{p^{e_1}}\times\cdots\times\Bbb Z_{p^{e_{s-1}}}$, 
$B=\Bbb Z_{p^{e_{s+1}}}\times\cdots\times\Bbb Z_{p^{e_r}}$.
Define $\Bbb Z$-maps
\[
\begin{array}{cccc}
\iota:&M_i=A\times\Bbb Z_{p^{e_s}}\times B &\longrightarrow&M_{i+1}=A\times\Bbb Z_{p^{e_s+1}}\times B\cr
&(x,y,z)&\longmapsto&(x,py,z)
\end{array}
\]
\[
\begin{array}{cccc}
\alpha_{i+1}:&M_{i+1}=A\times\Bbb Z_{p^{e_s+1}}\times B&\longrightarrow& N\cr
&(x,y,z)&\longmapsto&\alpha_i(x,0,z)+y(b-\alpha_i(w,0,0))
\end{array}
\]
Then $\iota$ is 1-1, $\alpha_{i+1}\iota=\alpha_i$, and $|M_{i+1}/M_i|=p=|\alpha_{i+1}(M_{i+1})/\alpha_i(M_i)|$. 

{\bf Case 2.} Assume $b\in\alpha_i(M_i)$, say, $b=\alpha_i(c)$, $c\in M_i$. Then $\alpha_i(a)=p\alpha_i(c)$. Let $a_1=a-pc$. Then $a_1\in M_i\setminus pM_i$ and $\alpha_i(a_1)=0$. Choose $b'\in N\setminus\alpha_i(M_i)$ such that $pb'\in\alpha_i(M_i)$. Write $pb'=\alpha_i(d)$, $d\in M_i$. Choose $\epsilon=0$ or $1$ such that $a':=d+\epsilon a_1\notin pM_i$. Then $\alpha_i(a')=pb'$. Now we are in Case 1 with $a',b'$ in place of $a,b$, respectively.
\end{proof}  

\begin{thm}\label{T4.3}
Let $N$ be a finite $\Lambda$-module with $|N/(1-t)N|=l$. Then there exists a finite $\Lambda$-module $M\supset N$ with $|M/N|=l$ and $(1-t)M=N$.
\end{thm}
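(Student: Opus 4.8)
The plan is to reduce everything to Theorem~\ref{T4.2} and Lemma~\ref{L4.1}, the trick being to work with the endomorphism $\alpha=1-t$ of $N$ rather than with $t$ directly. Since $N$ is a $\Lambda$-module, $t\in\text{Aut}_\Bbb Z(N)$, and $\alpha=1-t\in\text{End}_\Bbb Z(N)$ satisfies $\alpha(N)=(1-t)N$, so that $|N/\alpha(N)|=|N/(1-t)N|=l$.

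First I would apply Theorem~\ref{T4.2} with this $\alpha$. This produces a finite abelian group $M\supset N$ with $|M/N|=|N/\alpha(N)|=l$ together with an onto homomorphism $\overline\alpha:M\to N$ extending $\alpha$, i.e.\ $\overline\alpha|_N=\alpha$. At this stage $M$ is only an abelian group, and the real task is to equip it with a $\Lambda$-structure that does the job.

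The key step is to define the action of $t$ on $M$ by $1-\overline\alpha$, viewing $\overline\alpha$ as a map $M\to N\hookrightarrow M$. To see that this is legitimate, note that $1-\alpha=1-(1-t)=t\in\text{Aut}(N)$, so Lemma~\ref{L4.1} applies and yields $1-\overline\alpha\in\text{Aut}(M)$. Thus $M$ becomes a $\Lambda$-module $M_{1-\overline\alpha}$. Because $\overline\alpha|_N=1-t$, the restriction of $1-\overline\alpha$ to $N$ equals $t$, so $N$ is a $\Lambda$-submodule of $M$ carrying exactly its original structure.

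Finally I would verify the two defining conditions. The order condition $|M/N|=l$ is already guaranteed by Theorem~\ref{T4.2}. For the image condition, observe that with $t$ acting on $M$ as $1-\overline\alpha$ we have $1-t=\overline\alpha$ as maps $M\to M$, whence $(1-t)M=\overline\alpha(M)=N$ since $\overline\alpha$ is onto. I expect no serious obstacle beyond this initial reframing: the entire difficulty lies in feeding $1-t$ (not $t$) into Theorem~\ref{T4.2} and then recovering the automorphism $1-\overline\alpha$ from the complementary fact that $1-\alpha=t$ is invertible on $N$. Once this is set up, the verifications are immediate.
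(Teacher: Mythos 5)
Your proposal is correct and follows essentially the same route as the paper's own proof: set $\alpha=1-t$, invoke Theorem~\ref{T4.2} to extend it to an onto map $\overline\alpha:M\to N$ with $|M/N|=l$, use Lemma~\ref{L4.1} (via $1-\alpha=t\in\text{Aut}_\Bbb Z(N)$) to conclude $1-\overline\alpha\in\text{Aut}_\Bbb Z(M)$, and define the $\Lambda$-structure by $tx=(1-\overline\alpha)(x)$. All verifications you list match the paper's argument exactly.
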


\begin{proof}
Let $\alpha=1-t\in\text{End}_\Bbb Z(N)$. By Theorem~\ref{T4.2}, there exist a finite abelian group $M\supset N$ and an onto $\Bbb Z$-map $\overline\alpha:M\to N$ such that $\overline\alpha|_N=\alpha$ and $|M/N|=|N/\alpha(N)|$. Since $1-\alpha\in\text{Aut}_\Bbb Z(N)$, by Lemma~\ref{L4.1}, $1-\overline\alpha\in\text{Aut}_\Bbb Z(M)$. Make $M$ into a
$\Lambda$-module by defining
\[
tx=(1-\overline\alpha)(x),\quad x\in M.
\]
Then ${}_\Lambda N$ is a submodule of ${}_\Lambda M$ and $(1-t)M=\overline\alpha(M)=N$.
\end{proof}

\begin{cor}\label{C4.4}
Let $p$ be a prime and $n$ a positive integer. Let $\mathcal M_{p^n}$ be a complete set of nonisomorphic $\Lambda$-modules $N$ such that $|N|=p^i$, $|(1-t)N|=p^j$, $2i-j\le n$. For each $N\in\mathcal M_{p^n}$, let $M_N\supset N$ be a $\Lambda$-module with $|M_N|=p^n$ and $(1-t)M_N=N$. (The existence of $M_N$ is $M_N$ is guaranteed by Theorem~\ref{T4.3}.) Then $\{(M_N,*):N\in\mathcal M_{p^n}\}$ is a complete set of nonisomorphic Alexander quandles of order $p^n$.
\end{cor}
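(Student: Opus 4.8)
The goal of Corollary~\ref{C4.4} is to recast the classification of Alexander quandles of order $p^n$ as a classification of certain $\Lambda$-modules. The plan is to combine Theorem~\ref{T1.1} (which says that Alexander quandles $M$, $M'$ are isomorphic iff $|M|=|M'|$ and $(1-t)M\cong(1-t)M'$ as $\Lambda$-modules) with Theorem~\ref{T4.3} (which guarantees, for a module $N$ with $|N/(1-t)N|=l$, a module $M\supset N$ with $(1-t)M=N$ and $|M/N|=l$). First I would set up the correspondence: to each Alexander quandle of order $p^n$, i.e.\ each $\Lambda$-module $M$ with $|M|=p^n$, associate the $\Lambda$-module $N=(1-t)M$. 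By Theorem~\ref{T1.1} this association descends to a well-defined \emph{injection} from isomorphism classes of order-$p^n$ Alexander quandles into isomorphism classes of $\Lambda$-modules. Thus it suffices to identify precisely the image of this map, which is exactly the set $\mathcal M_{p^n}$, and to check that each member of $\mathcal M_{p^n}$ arises from a quandle $(M_N,*)$.

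The heart of the argument is to verify the numerical constraint defining $\mathcal M_{p^n}$. Suppose $N=(1-t)M$ for some $\Lambda$-module $M$ with $|M|=p^n$; write $|N|=p^i$ and $|(1-t)N|=p^j$. Then $|(1-t)M|=p^i$ and $|(1-t)^2M|=|(1-t)N|=p^j$. I would compute $|M/N|=|M|/|(1-t)M|=p^{n-i}$ and compare it with $|N/(1-t)N|=|N|/|(1-t)N|=p^{i-j}$. The key inequality $|N/(1-t)N|\mid |M/N|$, established in the paragraph preceding Lemma~\ref{L4.1} (via the surjection $M/(1-t)M\overset{1-t}{\to}(1-t)M/(1-t)^2M$), gives $p^{i-j}\mid p^{n-i}$, i.e.\ $i-j\le n-i$, which rearranges to $2i-j\le n$. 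This is exactly the defining condition of $\mathcal M_{p^n}$, so every module in the image satisfies it.

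Conversely, given any $N\in\mathcal M_{p^n}$ with $|N|=p^i$ and $|(1-t)N|=p^j$ subject to $2i-j\le n$, set $l=|N/(1-t)N|=p^{i-j}$. Since $2i-j\le n$ is equivalent to $i-j\le n-i$, we have $|N/(1-t)N|=p^{i-j}$ dividing $p^{n-i}$; I would then apply Theorem~\ref{T4.3} repeatedly (or in the strengthened form implicitly available) to build a module $M_N\supset N$ with $(1-t)M_N=N$ and $|M_N/N|=p^{n-i}$, so that $|M_N|=p^i\cdot p^{n-i}=p^n$. This realizes $N$ as $(1-t)M_N$ for a genuine order-$p^n$ quandle, establishing surjectivity onto $\mathcal M_{p^n}$. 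Finally, injectivity of the quandle-to-module assignment restricted to $\{M_N:N\in\mathcal M_{p^n}\}$ follows directly from Theorem~\ref{T1.1}: if $(M_N,*)\cong(M_{N'},*)$, then $N=(1-t)M_N\cong(1-t)M_{N'}=N'$, forcing $N=N'$ as members of the complete nonisomorphic set $\mathcal M_{p^n}$.

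The main obstacle I anticipate is the gap between Theorem~\ref{T4.3} as literally stated and what the corollary needs. Theorem~\ref{T4.3} produces $M$ with $|M/N|$ equal to the specific value $|N/(1-t)N|$, whereas here I need $|M_N/N|=p^{n-i}$, which may be a strictly larger power of $p$ when $2i-j<n$. I would resolve this either by noting that Theorem~\ref{T4.2}'s inductive construction can be extended to achieve any index $l$ that is a multiple of $|N/\alpha(N)|$ (the divisibility $|N/(1-t)N|\mid l$ being exactly the necessary and sufficient condition advertised in the introduction), or by applying the construction in stages and padding with a trivial $\Lambda$-module summand on which $t$ acts as the identity so that $(1-t)$ kills the extra part; care is needed to ensure the enlarged $(1-t)M_N$ remains exactly $N$ rather than growing. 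This bookkeeping—matching the prescribed index $p^{n-i}$ while keeping $(1-t)M_N=N$ fixed—is the one place where the routine divisibility arithmetic must be handled with attention.
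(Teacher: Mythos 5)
Your proposal is correct and follows essentially the same route as the paper: necessity of $2i-j\le n$ via the surjection $M/(1-t)M\to(1-t)M/(1-t)^2M$, sufficiency via Theorem~\ref{T4.3}, and then Theorem~\ref{T1.1} for completeness and non-isomorphism. You are also right that Theorem~\ref{T4.3} as stated only yields $|M|=p^{2i-j}$, a point the paper's two-line proof glosses over; your fix of adjoining a direct summand on which $t$ acts as the identity (so that $1-t$ annihilates it and $(1-t)M_N$ is unchanged) works without difficulty.
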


\begin{proof}
Given a $\Lambda$-module $N$ with $|N|=p^i$ and $|(1-t)N|=p^j$, it follows from Theorem~\ref{T4.3} that $n\ge 2i-j$ is a necessary and sufficient condition on $n$ for which there exists a $\Lambda$-module $M\supset N$ with $|M|=p^n$ and $(1-t)M=N$. Now the conclusion in the corollary follows from Theorem~\ref{T1.1}.
\end{proof}

The $\Lambda$-modules in $\mathcal M_{p^n}$, $n\le 4$, are contained in Table~\ref{Tb1}; to save space, we will not enumerate these modules separately. From Table~\ref{Tb1}
we find that the number of nonisomorphic Alexander quandles of order $p^n$ ($n\le 4$) is
\begin{equation}\label{4.1}
\begin{cases}
1&\text{if}\ n=0,\cr 
p-1&\text{if}\ n=1,\cr
2p^2-2p-1&\text{if}\ n=2,\cr
3p^3-4p^2+p-3&\text{if}\ n=3,\cr
5p^4-6p^3+p^2-6p-1&\text{if}\ n=4.
\end{cases}
\end{equation}
The number of nonisomorphic connected Alexander quandles of order $p^n$ ($n\le 4$), also from Table~\ref{Tb1}, is
\begin{equation}\label{4.2}
\begin{cases}
1&\text{if}\ n=0,\cr 
p-2&\text{if}\ n=1,\cr
2p^2-3p-1&\text{if}\ n=2,\cr
3p^3-6p^2+p&\text{if}\ n=3,\cr
5p^4-9p^3+p^2-2p+1&\text{if}\ n=4.
\end{cases}
\end{equation}

\noindent{\bf Remark.} 
\begin{itemize}
  \item [(i)] \eqref{4.1} agrees with the numbers of nonisomorphic Alexander quandles of order $\le 15$ in \cite{Nel03}.
  \item [(ii)] \eqref{4.2} with $n=2$ agrees with the results of \cite{Gra04, Nel03}; \eqref{4.2} with $p^n=2^4$ agrees with the number in \cite{Mur-Nel09}.
  \item [(iii)] \cite{Mur-Nel09} stated that the number of nonisomorphic Alexander quandles of order $16$ is $24$. Our result (\eqref{4.1} with $p^n=2^4$) is $23$. It appears that the two Alexander quandles in \cite{Mur-Nel09} with $\text{Im}(\text{Id}-\phi)=\Bbb Z_4\oplus \Bbb Z_2$ (notation of \cite{Mur-Nel09}) are isomorphic. Professor W. E. Clark computed the numbers of nonisomorphic quandles of order $p^n<2^8$ using a computer program; his results (with $n\le 4$) agree with \eqref{4.1}. 
  \end{itemize}

%%%%%%%%%%%%%%%%%%%%%%%%%%%%%%%%%%%%%%%
%     appendix
%%%%%%%%%%%%%%%%%%%%%%%%%%%%%%%%%%%%%%%

\section*{Appendix: Table}

%%%%%%%%%%%%%%%%% Table 1 %%%%%%%%%%%%%%%%%%%%%%%%%%%%%%%%

\begin{table}[h]
\caption{Nonisomorphic $\Lambda$-modules of order $p^n$, $n\le 4$}\label{Tb1}
\vspace{-5mm}
%%%%%%%%%%%%%%%%%%%%%%%%%%%%%%%%%%%%%%%%
%    n=0
%%%%%%%%%%%%%%%%%%%%%%%%%%%%%%%%%%%%%%%%
\[
\begin{tabular}{c|l|l|c|c} 
\multicolumn{5}{l}{$n=0$} \\ \hline
$(M,+)$ & matrix of $t$ & \multicolumn{1}{c|}{$|(1-t)M|$} & number &  total  \\ \hline
$0$ & $[0]$ & $1$ & $1$ & $1$  \\ \hline      
\end{tabular}
\]

%%%%%%%%%%%%%%%%%%%%%%%%%%%%%%%%%%%%%%%%
%    n=1
%%%%%%%%%%%%%%%%%%%%%%%%%%%%%%%%%%%%%%%%
\[
\begin{tabular}{c|l|l|c|c}

\multicolumn{5}{l}{$n=1$} \\ \hline
$(M,+)$ & matrix of $t$ & \multicolumn{1}{c|}{$|(1-t)M|$} & number &  total  \\ \hline
\multirow{2}*{$\Bbb Z_p$} & \multirow{2}*{$[b],\ b\in\Bbb Z_p^\times$} & $p^0$ if $b=1$ & $1$ & \multirow{2}*{$p-1$} \\ \cline{3-4} 
& & $p^1$ if $b\ne 1$ & $p-2$  \\ \hline      
\end{tabular}
\]
\end{table}

%\newpage

\addtocounter{table}{-1}

\begin{sidewaystable}[h]
\caption{Nonisomorphic $\Lambda$-modules of order $p^n$, $n\le 4$ (continued)}
\vspace{-5mm}
%%%%%%%%%%%%%%%%%%%%%%%%%%%%%%%%%%%%%%%%
%    n=2
%%%%%%%%%%%%%%%%%%%%%%%%%%%%%%%%%%%%%%%%
\[
\begin{tabular}{c|l|l|c|c}
\multicolumn{5}{l}{$n=2$} \\ \hline
$(M,+)$ & \multicolumn{1}{c|}{matrix of $t$} & \multicolumn{1}{c|}{$|(1-t)M|$} & number &  total  \\ \hline
\multirow{3}*{$\Bbb Z_{p^2}$} & \multirow{3}*{$[b],\ b\in\Bbb Z_{p^2}^\times$} &  $p^0$ if $b=1$ & $1$ & 
\multirow{3}*{$p^2-p$} \\
\cline{3-4}
& & $p^1$ if $b\ne 1,\ b\equiv 1\ (p)$ & $p-1$ &  \\ \cline{3-4}             
& & $p^2$ if $b\not\equiv 1\ (p)$ & $(p-2)p$ & \\ \hline
\multirow{3}*{$\Bbb Z_p^2$} & \multirow{3}*{$\left[\begin{matrix} b\cr & c\end{matrix}\right],\ 0<b\le c<p$} &
$p^0$ if $b=c=1$ & $1$ & \multirow{3}*{$\textstyle\binom p 2$} \\ \cline{3-4}
& & $p^1$ if $b=1<c$ & $p-2$ & \\ \cline{3-4}
& & $p^2$ if $b>1$ & $\textstyle\binom{p-1}2$ & \\ \hline
\multirow{2}*{$\Bbb Z_p^2$} &  \multirow{2}*{$\left[\begin{matrix} b&1\cr & b\end{matrix}\right],\ b\in\Bbb Z_p^\times$} &
$p^1$ if $b=1$ \phantom{$\Bigm |$} & $1$ &  \multirow{2}*{$p-1$} \\ \cline{3-4}
& & $p^2$ if $b\ne 1$ \phantom{$\Bigm |$} & $p-2$ & \\ \hline
$\Bbb Z_p^2$ & $\left[\begin{matrix} 0&1\cr -b_0& -b_1\end{matrix}\right],\ X^2+b_1X+b_0\in\Bbb Z_p[X]\ \text{irr}$ & $p^2$ \phantom{$\begin{matrix} 1\cr 1\cr 1\end{matrix}$} & $\frac 12(p^2-p)$ & $\frac 12(p^2-p)$ \\ \hline
\end{tabular}
\]

\vskip 1cm

%%%%%%%%%%%%%%%%%%%%%%%%%%%%%%%%%%%%%%%%
%    n=3 (1)
%%%%%%%%%%%%%%%%%%%%%%%%%%%%%%%%%%%%%%%%
\[
\begin{tabular}{c|l|l|c|c}
\multicolumn{5}{l}{$n=3$} \\ \hline
$(M,+)$ & \multicolumn{1}{c|}{matrix of $t$} & \multicolumn{1}{c|}{$|(1-t)M|$} & number &  total  \\ \hline
\multirow{4}*{$\Bbb Z_{p^3}$} & \multirow{4}*{$[b],\ b\in\Bbb Z_{p^3}^\times$} &  $p^0$ if $b=1$ & $1$ & \multirow{4}*{$p^3-p^2$} \\ \cline{3-4}
& & $p^1$ if $b\ne 1,\ b\equiv 1\ (p^2)$ & $p-1$ &  \\ \cline{3-4}             
& & $p^2$ if $b\not\equiv 1\ (p^2),\  b\equiv 1\ (p)$ & $p(p-1)$ &  \\ \cline{3-4}
& & $p^3$ if $b\not\equiv 1\ (p)$ & $p^2(p-2)$ & \\ \hline
\multirow{6}*{$\Bbb Z_{p^2}\times \Bbb Z_p$} & \multirow{6}*{$\left[\begin{matrix} b\cr &c\end{matrix}\right],\ b\in\Bbb Z_{p^2}^\times,\ c\in\Bbb Z_p^\times$} &
$p^0$ if $b=1,\ c=1$ & $1$ & \multirow{6}*{$p(p-1)^2$} \\ \cline{3-4}
& & $p^1$ if $b\ne 1$, $b\equiv 1\ (p),\ c=1$ & \multirow{2}*{$2p-3$} & \\
& & \phantom{$p^1$} or $b=1,\ c\ne 1$ & & \\ \cline{3-4}
& & $p^2$ if $b\not\equiv 1\ (p),\ c=1$ & \multirow{2}*{$(p-2)(2p-1)$} & \\
& & \phantom{$p^2$} or $b\ne 1,\ b\equiv 1\ (p),\ c\ne 1$ & & \\ \cline{3-4}
& & $p^3$ if $b\not\equiv 1\ (p),\ c\ne 1$ & $p(p-2)^2$ \\ \hline
\end{tabular}
\]
\end{sidewaystable}

\newpage

%%%%%%%%%%%%%%%%%%%%%%%%%%%%%%%%%%%%%%%%
%    n=3 (2)
%%%%%%%%%%%%%%%%%%%%%%%%%%%%%%%%%%%%%%%%

\addtocounter{table}{-1}

\begin{sidewaystable}
\caption{Nonisomorphic $\Lambda$-modules of order $p^n$, $n\le 4$ (continued)}
\vspace{-5mm}

\[
\begin{tabular}{c|l|l|c|c}
\multicolumn{5}{l}{$n=3$ (continued)} \\ \hline
$(M,+)$ & \multicolumn{1}{c|}{matrix of $t$} & \multicolumn{1}{c|}{$|(1-t)M|$} & number &  total  \\ \hline
\multirow{2}*{$\Bbb Z_{p^2}\times \Bbb Z_p$} & \multirow{2}*{$\left[\begin{matrix} b&0\cr 1&b\end{matrix}\right]$, $0<b<p$} 
& $p^1$ if $b=1$ \phantom{$\Bigm |$} & $1$ &
\multirow{2}*{$p-1$} \\ \cline{3-4}
& & $p^3$ if $b\ne 1$ \phantom{$\Bigm |$} & $p-2$ & \\ \hline
\multirow{3}*{$\Bbb Z_{p^2}\times \Bbb Z_p$} & \multirow{3}*{$\left[\begin{matrix} b&p\cr \gamma&b\end{matrix}\right]$, $0<b<p,\ \gamma\in\Bbb Z_p$} &
$p^1$ if $b=1,\ \gamma=0$ & $1$ & \multirow{3}*{$p(p-1)$} \\ \cline{3-4}
& & $p^2$ if $b=1,\ \gamma\ne0$ & $p-1$ & \\ \cline{3-4}
& & $p^3$ if $b\ne 1$ & $p(p-2)$ & \\ \hline
\multirow{4}*{$\Bbb Z_p^3$} & \multirow{4}*{$\left[\begin{matrix} b\cr &c\cr &&d\end{matrix}\right]$, $0<b\le c\le d<p$} & 
$p^0$ if $b=c=d=1$ & $1$ & \multirow{4}*{$\textstyle\binom {p+1}3$} \\ \cline{3-4}
& & $p^1$ if $b=c=1<d$ & $p-2$ \\ \cline{3-4}
& & $p^2$ if $b=1<c$ & $\textstyle\binom{p-1}2$ & \\ \cline{3-4}
& & $p^3$ if $b>1$ & $\textstyle\binom p3$ & \\ \hline
\multirow{4}*{$\Bbb Z_p^3$} & \multirow{4}*{$\left[\begin{matrix} b&1\cr &b\cr &&c\end{matrix}\right]$, $b,c\in\Bbb Z_p^\times$} &
$p^1$ if $b=c=1$ & $1$ & \multirow{4}*{$(p-1)^2$} \\ \cline{3-4}
& & $p^2$ if $b=1,\ c\ne 1$ & \multirow{2}*{$2(p-2)$} & \\
& & \phantom{$p^2$} or $b\ne 1,\ c=1$ & & \\ \cline{3-4}
& & $p^3$ if $b\ne 1\ c\ne 1$ & $(p-2)^2$ & \\ \hline
\multirow{2}*{$\Bbb Z_p^3$} & \multirow{2}*{$\left[\begin{matrix} b&1\cr &b&1\cr &&b\end{matrix}\right],\ b\in\Bbb Z_p^\times$} & 
$p^2$ if $b=1$ \phantom{$\displaystyle\int$} & $1$ & \multirow{2}*{$p-1$} \\ \cline{3-4}
& & $p^3$ if $b\ne 1$ \phantom{$\displaystyle\int$} & $p-2$ \\ \hline 
$\Bbb Z_p^3$ & $\left[\begin{matrix} 0&1&0\cr 0&0&1\cr -b_0&-b_1&-b_2\end{matrix}\right]$, $X^3+b_2X^2+b_1X+b_0\in\Bbb Z_p[X]\ \text{irr}$ &
$p^3$ \phantom{$\begin{matrix} 1\cr 1\cr 1\cr 1\end{matrix}$} & $\frac 13(p^3-p)$ & $\frac 13(p^3-p)$ \\ \hline
\multirow{2}*{$\Bbb Z_p^3$} & \multirow{2}*{$\left[\begin{matrix} 0&1\cr -b_0&-b_1\cr &&c\end{matrix}\right]$, $X^2+b_1X+b_0\in\Bbb Z_p[X]\ \text{irr},\ c\in\Bbb Z_p^\times$} & $p^2$ if $c=1$ \phantom{$\displaystyle\int$} & $\frac 12(p^2-p)$ & \multirow{2}*{$\frac 12p(p-1)^2$} \\ \cline{3-4}
& & $p^3$ if $c\ne 1$ \phantom{$\displaystyle\int$} & $\frac 12(p^2-p)(p-2)$ & \\ \hline
\end{tabular}
\]
\end{sidewaystable}

\newpage

\addtocounter{table}{-1}

\begin{sidewaystable}
\caption{Nonisomorphic $\Lambda$-modules of order $p^n$, $n\le 4$ (continued)}
\vspace{-5mm}
%%%%%%%%%%%%%%%%%%%%%%%%%%%%%%%%%%%%%%%%
%    n=4 (1)
%%%%%%%%%%%%%%%%%%%%%%%%%%%%%%%%%%%%%%%%
\[
\begin{tabular}{c|l|l|c|c}
\multicolumn{5}{l}{$n=4$} \\ \hline
$(M,+)$ & \multicolumn{1}{c|}{matrix of $t$} & \multicolumn{1}{c|}{$|(1-t)M|$} & number &  total  \\ \hline
\multirow{5}*{$\Bbb Z_{p^4}$} & \multirow{5}*{$[b],\ b\in\Bbb Z_{p^4}^\times$} & $p^0$ if $b=1$ & $1$ & \multirow{5}*{$p^3(p-1)$} \\ \cline{3-4}
& & $p^1$ if $b\ne 1$, $b\equiv 1\ (p^3)$ & $p-1$ & \\ \cline{3-4}
& & $p^2$ if $b\not\equiv 1\ (p^3)$, $b\equiv 1\ (p^2)$ & $p(p-1)$ & \\ \cline{3-4}
& & $p^3$ if $b\not\equiv 1\ (p^2)$, $b\equiv 1\ (p)$ & $p^2(p-1)$ & \\ \cline{3-4}
& & $p^4$ if $b\not\equiv 1\ (p)$ & $p^3(p-2)$ & \\ \hline
\multirow{8}*{$\Bbb Z_{p^3}\times \Bbb Z_p$} & \multirow{8}*{$\left[\begin{matrix} b\cr &c\end{matrix}\right]$, $b\in\Bbb Z_{p^3}^\times,\ c\in\Bbb Z_p^\times$} &
$p^0$ if $b=1,\ c=1$ & $1$ & \multirow{8}*{$p^2(p-1)^2$} \\ \cline{3-4}
& & $p^1$ if $b\ne 1,\ b\equiv 1\ (p^2),\ c=1$ & \multirow{2}*{$2p-3$} & \\
& & \phantom{$p^1$} or $b=1,\ c\ne 1$ & & \\ \cline{3-4}
& & $p^2$ if $b\not\equiv 1\ (p^2)$, $b\equiv 1\ (p)$, $c=1$ & \multirow{2}*{$2(p-1)^2$} & \\
& & \phantom{$p^2$} or $b\ne 1,\ b\equiv 1\ (p^2)$, $c\ne 1$ & & \\ \cline{3-4}
& &  $p^3$ if $b\not\equiv 1\ (p),\ c=1$ & \multirow{2}*{$p(p-2)(2p-1)$} & \\
& & \phantom{$p^2$} or $b\not\equiv 1\ (p^2)$, $b\equiv 1\ (p),\ c\ne 1$ & & \\ \cline{3-4}
& & $p^4$ if $b\not\equiv 1\ (p),\ c\ne 1$ & $p^2(p-2)^2$ & \\ \hline 
\multirow{3}*{$\Bbb Z_{p^3}\times \Bbb Z_p$} & \multirow{3}*{$\left[\begin{matrix} b&0\cr 1&b\end{matrix}\right]$, $0<b<p^2,\ b\not\equiv 0\ (p)$} &
$p^1$ if $b=1$ & $1$ & \multirow{3}*{$p(p-1)$} \\ \cline{3-4}
& & $p^2$ if $b\ne 1,\ b\equiv 1\ (p)$ & $p-1$ & \\ \cline{3-4}
& & $p^4$ if $b\not\equiv 1\ (p)$ & $p(p-2)$ & \\ \hline
\multirow{4}*{$\Bbb Z_{p^3}\times \Bbb Z_p$} & \multirow{4}*{$\left[\begin{matrix} b&\kern-2mm p^2\cr \gamma& \kern -2mm b\end{matrix}\right]$, $0<b<p^2,\ b\not\equiv 0\ (p)$, $\gamma\in\Bbb Z_p$} & $p^1$ if $b=1,\ \gamma=0$ & $1$ & \multirow{4}*{$p^2(p-1)$} \\ \cline{3-4}
& & $p^2$ if $b\ne 1,\ b\equiv 1\ (p)$ & \multirow{2}*{$p^2-1$} & \\
& & \phantom{$p^2$} or $b=1,\ \gamma\ne 0$  & & \\ \cline{3-4}
& & $p^4$ if $b\not\equiv 1\ (p)$ & $p^2(p-2)$ \\ \hline
\multirow{6}*{$\Bbb Z_{p^2}^2$} & \multirow{6}*{$\left[\begin{matrix} b\cr &c\end{matrix}\right]$, $0<b\le c<p^2$} &
$p^0$ if $b=c=1$ & $1$ & \multirow{6}*{$\textstyle\binom{p(p-1)+1}2$} \\ \cline{3-4}
& & $p^1$ if $b=1<c$, $c\equiv 1\ (p)$ & $p-1$ & \\ \cline{3-4}
& & $p^2$ if $b=1,\ c\not\equiv 1\ (p)$ & \multirow{2}*{$p(p-2)+\textstyle\binom p2$} & \\
& & \phantom{$p^2$} or $b, c\ne 1$, $b,c\equiv 1\ (p)$ & & \\ \cline{3-4}
& & $p^3$ if $\{b,c\}\!=\!\{b_1,c_1\}$, $b_1\not\equiv 1\ (p)$, $c_1\ne 1,\ c_1\!\equiv \!1\, (p)$ & $p(p-1)(p-2)$ & \\ \cline{3-4}
& & $p^4$ if $b,c\not\equiv 1\ (p)$ & $\textstyle\binom{p(p-2)+1}2$ & \\ \hline
\multirow{3}*{$\Bbb Z_{p^2}^2$} & \multirow{3}*{$\left[\begin{matrix} b&p\cr 0&b\end{matrix}\right],\ b\in\Bbb Z_{p^2}^\times$} &
$p^1$ if $b=1$ & $1$ & \multirow{3}*{$p(p-1)$} \\ \cline{3-4}
& & $p^2$ if $b\ne 1,\ b\equiv 1\ (p)$ & $p-1$ & \\ \cline{3-4}\
& & $p^4$ if $b\not\equiv 1\ (p)$ & $p(p-2)$ & \\ \hline
\end{tabular}
\]
\end{sidewaystable}

\newpage 

\addtocounter{table}{-1}

\begin{sidewaystable}
\caption{Nonisomorphic $\Lambda$-modules of order $p^n$, $n\le 4$ (continued)}
\vspace{-5mm}
%%%%%%%%%%%%%%%%%%%%%%%%%%%%%%%%%%%%%%%%
%    n=4 (2)
%%%%%%%%%%%%%%%%%%%%%%%%%%%%%%%%%%%%%%%%
\[
\begin{tabular}{c|l|l|c|c}
\multicolumn{5}{l}{$n=4$ (continued)} \\ \hline
$(M,+)$ & \multicolumn{1}{c|}{matrix of $t$} & \multicolumn{1}{c|}{$|(1-t)M|$} & number &  total  \\ \hline
\multirow{2}*{$\Bbb Z_{p^2}^2$} & \multirow{2}*{$\left[\begin{matrix} b&p\cr -pb_0&b-pb_1\end{matrix}\right]$, $\begin{array}{l} 0<b<p,\cr X^2+b_1X+b_0\in\Bbb Z_p[X]\ \text{irr}\end{array}$}  &
$p^2$ if $b=1$ \phantom{$\Bigm |$} & $\frac 12(p^2-p)$ & \multirow{2}*{$\frac 12p(p-1)^2$}\\ \cline{3-4}
& & $p^4$ if $b\ne 1$ \phantom{$\Bigm |$} & $\frac 12(p^2-p)(p-2)$ & \\ \hline
\multirow{3}*{$\Bbb Z_{p^2}^2$} & \multirow{3}*{$\left[\begin{matrix} b+p\alpha&1\cr p\gamma &b\end{matrix}\right]$, $0<b<p$, $\alpha,\gamma\in\Bbb Z_p$} &
$p^2$ if $b=1,\ \gamma=0$ & $p$ &  \multirow{3}*{$p^2(p-1)$}\\ \cline{3-4}
& & $p^3$ if $b=1,\ \gamma\ne 0$ & $p(p-1)$ & \\ \cline{3-4}
& & $p^4$ if $b\ne 1$ & $p^2(p-2)$ & \\ \hline
$\Bbb Z_{p^2}^2$ & $\left[\begin{matrix} p\alpha&1+p\beta\cr -b_0&-b_1\end{matrix}\right]$, $\begin{array}{l} \alpha,\beta\in\Bbb Z_p,\ 0\le b_0,b_1<p \cr X^2+b_1X+b_0\in\Bbb Z_p[X]\ \text{irr}\end{array}$ & $p^4$ \phantom{$\begin{matrix} 1\cr 1\cr 1\end{matrix}$} & $\frac 12p^2(p^2-p)$ & $\frac 12p^2(p^2-p)$ \\ \hline
\multirow{9}*{$\Bbb Z_{p^2}\times\Bbb Z_p^2$} & \multirow{9}*{$\left[\begin{matrix} b\cr &c\cr &&d\end{matrix}\right]$, $b\in\Bbb Z_{p^2}^\times,\ 0<c\le d<p$} &  
$p^0$ if $b=1,\ c=d=1$ & $1$ & \multirow{9}*{$\frac 12p^2(p-1)^2$} \\ \cline{3-4}
& & $p^1$ if $b\ne 1,\ b\equiv 1\ (p)$, $c=d=1$ & \multirow{2}*{$2p-3$} & \\ 
& & \phantom{$p^2$} or $b=1,\ c=1<d$ & & \\ \cline{3-4} 
& & $p^2$ if $b\not\equiv 1\ (p)$, $c=d=1$ & \multirow{3}*{$\frac 12(p-2)(5p-3)$} & \\
& & \phantom{$p^2$} or $b\ne 1,\ b\equiv 1\ (p)$, $c=1<d$ & & \\
& & \phantom{$p^2$} or $b=1,\ c>1$ & & \\ \cline{3-4}
& & $p^3$ if $b\not\equiv 1\ (p)$, $c=1<d$ & \multirow{2}*{$\frac 12(p-2)(3p^2-6p+1)$} & \\
& & \phantom{$p^3$} or $b\ne 1,\ b\equiv 1\ (p)$, $c>1$ & &  \\ \cline{3-4}
& & $p^4$ if $b\not\equiv 1\ (p)$, $c>1$ & {$\frac 12p(p-1)(p-2)^2$} & \\ \hline
\multirow{4}*{$\Bbb Z_{p^2}\times\Bbb Z_p^2$} &  \multirow{4}*{$\left[\begin{matrix} b&0&0\cr 1&b&0\cr 0&0&c\end{matrix}\right]$, $0<b<p,\ c\in\Bbb Z_p^\times$} &
$p^1$ if $b=1,\ c=1$ & $1$ & \multirow{4}*{$(p-1)^2$} \\ \cline{3-4}
& & $p^2$ if $b=1,\ c\ne 1$ & $p-2$ & \\ \cline{3-4}
& & $p^3$ if $b\ne 1,\ c=1$ & $p-2$ & \\ \cline{3-4}
& & $p^4$ if $b\ne 1,\ c\ne 1$ & $(p-2)^2$ \\ \hline
\multirow{3}*{$\Bbb Z_{p^2}\times\Bbb Z_p^2$} &  \multirow{3}*{$\left[\begin{matrix} b&p&0\cr 0&b&0\cr \eta&0&b\end{matrix}\right]$, $0<b<p,\ \eta\in\Bbb Z_p$} &
$p^1$ if $b=1,\ \eta=0$ \phantom{$\Bigm |$} & $1$ & \multirow{3}*{$p(p-1)$} \\ \cline{3-4}
& & $p^2$ if $b=1,\ \eta\ne 0$ \phantom{$\Bigm |$} & $p-1$ & \\ \cline{3-4}
& & $p^4$ if $b\ne 1$ \phantom{$\Bigm |$} & $p(p-2)$ & \\ \hline
\multirow{2}*{$\Bbb Z_{p^2}\times\Bbb Z_p^2$} &  \multirow{2}*{$\left[\begin{matrix} b&p&0\cr 1&b&0\cr 0&0&b\end{matrix}\right],\ 0<b<p$} & 
$p^2$ if $b=1$ \phantom{$\displaystyle\int$} & $1$ &  \multirow{2}*{$p-1$} \\ \cline{3-4}
& & $p^4$ if $b\ne 1$ \phantom{$\displaystyle\int$} & $p-2$ & \\ \hline
\end{tabular}
\]
\end{sidewaystable}

\newpage

\addtocounter{table}{-1}

\begin{sidewaystable}
\caption{Nonisomorphic $\Lambda$-modules of order $p^n$, $n\le 4$ (continued)}
\vspace{-5mm}
%%%%%%%%%%%%%%%%%%%%%%%%%%%%%%%%%%%%%%%%
%    n=4 (3)
%%%%%%%%%%%%%%%%%%%%%%%%%%%%%%%%%%%%%%%%
\[
\begin{tabular}{c|l|l|c|c}
\multicolumn{5}{l}{$n=4$ (continued)} \\ \hline
$(M,+)$ & \multicolumn{1}{c|}{matrix of $t$} & \multicolumn{1}{c|}{$|(1-t)M|$} & number &  total  \\ \hline
\multirow{6}*{$\Bbb Z_{p^2}\times\Bbb Z_p^2$} &  \multirow{6}*{$\left[\begin{matrix} b\cr &c&1\cr &&c\end{matrix}\right]$, $b\in\Bbb Z_{p^2}^\times,\ c\in\Bbb Z_p^\times$} &$p^1$ if $b=1,\ c=1$ & $1$  &  \multirow{6}*{$p(p-1)^2$} \\ \cline{3-4}
& & $p^2$ if $b\ne 1,\ b\equiv 1\ (p)$, $c=1$ & \multirow{2}*{$2p-3$} & \\
& & \phantom{$p^2$} or $b=1,\ c\ne 1$ & & \\ \cline{3-4}
& & $p^3$ if $b\not\equiv 1\ (p),\ c=1$  & \multirow{2}*{$(p-2)(2p-1)$} & \\
& & \phantom{$p^3$} or $b\ne 1,\ b\equiv 1\ (p)$, $c\ne 1$ & & \\ \cline{3-4}
& & $p^4$ if $b\not\equiv 1\ (p),\ c\ne 1$ & $p(p-2)^2$ & \\ \hline
\multirow{2}*{$\Bbb Z_{p^2}\times\Bbb Z_p^2$} &  \multirow{2}*{$\left[\begin{matrix} b&0&0\cr 0&b&1\cr 1&0&b \end{matrix}\right],\ 0<b<p$} &
$p^2$ if $b=1$ \phantom{$\displaystyle\int$} & $1$ & \multirow{2}*{$p-1$} \\ \cline{3-4}
& & $p^4$ if $b\ne 1$ \phantom{$\displaystyle\int$} & $p-2$ & \\ \hline
\multirow{3}*{$\Bbb Z_{p^2}\times\Bbb Z_p^2$} &  \multirow{3}*{$\left[\begin{matrix} b&p&0\cr 0&b&1\cr \eta&0&b \end{matrix}\right]$, $0<b<p,\ \eta\in\Bbb Z_p$} &
  $p^2$ if $b=1,\ \eta=0$ \phantom{$\Bigm |$} & $1$ &  \multirow{3}*{$p(p-1)$} \\ \cline{3-4}
& & $p^3$ if $b=1,\ \eta\ne 0$ \phantom{$\Bigm |$} & $p-1$ & \\ \cline{3-4}
& & $p^4$ if $b\ne 1$ \phantom{$\Bigm |$} & $p(p-2)$ &  \\ \hline
\multirow{5}*{$\Bbb Z_p^4$} &  \multirow{5}*{$\left[\begin{matrix} b\cr &c\cr &&d\cr &&& e\end{matrix}\right]$, $0<b\le c\le d\le e<p$}  &
$p^0$ if $b=c=d=e=1$ & $1$ & \multirow{5}*{$\textstyle\binom{p+2}4$} \\ \cline{3-4}
& & $p^1$ if $b=c=d=1<e$ & $p-2$ &  \\ \cline{3-4}
& & $p^2$ if $b=c=1<d$ & $\textstyle\binom{p-1}2$ &  \\ \cline{3-4}
& & $p^3$ if $b=1<c$ & $\textstyle\binom p3$ &  \\ \cline{3-4}
& & $p^4$ if $b>1$ & $\textstyle\binom {p+1}4$ & \\ \hline
\multirow{6}*{$\Bbb Z_p^4$} & \multirow{6}*{$\left[\begin{matrix} b&1\cr &b\cr &&c\cr &&& d\end{matrix}\right]$, $b\in\Bbb Z_p^\times,\ 0<c\le d<p$} &
 $p^1$ if $b=1,\ c=d=1$ & $1$ & \multirow{6}*{$\frac 12p(p-1)^2$} \\ \cline{3-4}
& & $p^2$ if $b\ne 1,\ c=d=1$ & \multirow{2}*{$2(p-2)$} & \\
& & \phantom{$p^2$} or $b=1,\ c=1<d$ & & \\ \cline{3-4}
& & $p^3$ if $b\ne 1,\ c=1<d$ & \multirow{2}*{$\frac 12(p-2)(3p-5)$} & \\ 
& & \phantom{$p^3$} or $b=1,\ c>1$ & & \\ \cline{3-4}
& & $p^4$ if $b\ne 1,\ c>1$ & $\frac 12(p-1)(p-2)^2$  \\ \hline
\multirow{3}*{$\Bbb Z_p^4$} &   \multirow{3}*{$\left[\begin{matrix} b&1\cr &b\cr &&c&1\cr &&& c\end{matrix}\right]$, $0<b\le c<p$} &
$p^2$ if $b=c=1$ \phantom{$\Bigm |$} & $1$ &  \multirow{3}*{$\textstyle\binom p2$} \\ \cline{3-4}
& & $p^3$ if $b=1<c$ \phantom{$\Bigm |$} & $p-2$ & \\ \cline{3-4}
& & $p^4$ if $b>1$ \phantom{$\Bigm |$} & $\textstyle\binom{p-1}2$ &  \\ \hline
\end{tabular}
\]
\end{sidewaystable}

\newpage

\addtocounter{table}{-1}

\begin{sidewaystable}
\caption{Nonisomorphic $\Lambda$-modules of order $p^n$, $n\le 4$ (continued)}
\vspace{-5mm}
%%%%%%%%%%%%%%%%%%%%%%%%%%%%%%%%%%%%%%%%
%    n=4 (4)
%%%%%%%%%%%%%%%%%%%%%%%%%%%%%%%%%%%%%%%%
\[
\begin{tabular}{c|l|l|c|c}
\multicolumn{5}{l}{$n=4$ (continued)} \\ \hline
$(M,+)$ & \multicolumn{1}{c|}{matrix of $t$} & \multicolumn{1}{c|}{$|(1-t)M|$} & number &  total  \\ \hline
\multirow{4}*{$\Bbb Z_p^4$} &  \multirow{4}*{$\left[\begin{matrix} b&1\cr &b&1\cr &&b\cr &&& c\end{matrix}\right]$, $b,c\in\Bbb Z_p^\times$} &
$p^2$ if $b=c=1$ \phantom{$\Bigm |$} & $1$ & \multirow{4}*{$(p-1)^2$} \\ \cline{3-4}
& & $p^3$ if $b=1,\ c\ne 1$ & \multirow{2}*{$2(p-2)$} & \\
& & \phantom{$p^3$} or $b\ne 1,\ c=1$ & & \\ \cline{3-4}
& & $p^4$ if $b\ne 1,\ c\ne 1$ \phantom{$\Bigm |$} & $(p-2)^2$ & \\ \hline
\multirow{4}*{$\Bbb Z_p^4$} &  \multirow{2}*{$\left[\begin{matrix} b&1\cr &b&1\cr &&b&1\cr &&& b\end{matrix}\right],\ b\in\Bbb Z_p^\times$}  &
$p^3$ if $b=1$ \phantom{$\begin{matrix} \Bigm |\cr \bigm |\end{matrix}$} & $1$ & \multirow{4}*{$p-1$} \\ \cline{3-4}
& &  $p^4$ if $b\ne 1$ \phantom{$\begin{matrix} \bigm |\cr \Bigm |\end{matrix}$} & $p-2$ & \\ \hline
$\Bbb Z_p^4$ & $\left[\begin{matrix} 0&1\cr -b_0&-b_1\cr &&0&1\cr &&-c_0&-c_1\end{matrix}\right]$, $\begin{array}{l} X^2+b_1X+b_0,\ X^2+c_1X+c_0\in\Bbb Z_p[X]\ \text{irr}\cr (b_0,b_1)\le(c_0,c_1)\ \text{(*)} \end{array}$ & $p^4$ \phantom{$\begin{matrix} \Bigm |\cr \Bigm |\cr \Bigm |\end{matrix}$} & $\textstyle\binom{\frac12(p^2-p)+1}2$ & $\textstyle\binom{\frac12(p^2-p)+1}2$ \\ \hline
$\Bbb Z_p^4$ & $\left[\begin{matrix} 0&1&1&0\cr -b_0&-b_1&0&1\cr &&0&1\cr &&-b_0&-b_1\end{matrix}\right]$, $X^2+b_1X+b_0\in\Bbb Z_p[X]\ \text{irr}$ & $p^4$ &
\phantom{$\begin{matrix} \Bigm |\cr \Bigm |\cr \Bigm |\end{matrix}$} $\frac12(p^2-p)$ & $\frac12(p^2-p)$ \\ \hline
$\Bbb Z_p^4$ & $\left[\begin{matrix} 0&1&0&0\cr 0&0&1&0 \cr 0&0&0&1\cr -b_0&-b_1&-b_2&-b_3\end{matrix}\right]$, $X^4+b_3X^3+b_2X^2+b_1X+b_0\in\Bbb Z_p[X]\ \text{irr}$ &
$p^4$ \phantom{$\begin{matrix} \Bigm |\cr \Bigm |\cr \Bigm |\end{matrix}$} & $\frac14(p^4-p^2)$ & $\frac14(p^4-p^2)$ \\ \hline

\multicolumn{5}{l}{(*) $\le$ is a total order in $\Bbb Z_p^2$} \phantom{$\Bigm |$} \\
\end{tabular}
\]
\end{sidewaystable}

\newpage

\addtocounter{table}{-1}

\begin{sidewaystable}
\caption{Nonisomorphic $\Lambda$-modules of order $p^n$, $n\le 4$ (continued)}
\vspace{-5mm}
%%%%%%%%%%%%%%%%%%%%%%%%%%%%%%%%%%%%%%%%
%    n=4 (5)
%%%%%%%%%%%%%%%%%%%%%%%%%%%%%%%%%%%%%%%%
\[
\begin{tabular}{c|l|l|c|c}
\multicolumn{5}{l}{$n=4$ (continued)} \\ \hline
$(M,+)$ & \multicolumn{1}{c|}{matrix of $t$} & \multicolumn{1}{c|}{$|(1-t)M|$} & number &  total  \\ \hline
\multirow{4}*{$\Bbb Z_{p^2}\times\Bbb Z_p^2$} &  \multirow{4}*{$\left[\begin{matrix} b&p\cr \gamma&b \cr &&c \end{matrix}\right]$, $0<b<p,\ \gamma\in\Bbb Z_p$, $c\in\Bbb Z_p^\times,\ b\not\equiv c\ (p)$} & $p^2$ if $b\!=\!1,\ \gamma\!=\!0,\ c\!\ne\! 1$ & $p-2$ & \multirow{4}*{$p(p-1)(p-2)$} \\ \cline{3-4}
& & $p^3$ if $b\ne 1,\ c=1$ & \multirow{2}*{$(p-2)(2p-1)$} & \\
& & \phantom{$p^3$} or $b\!=\!1,\ \gamma\!\ne\! 0,\ c\!\ne\! 1$ & & \\ \cline{3-4}
& & $p^4$ if $b\ne1,\ c\ne 1$ & $p(p-2)(p-3)$ & \\ \hline
\multirow{3}*{$\Bbb Z_p^4$} & \multirow{2}*{$\left[\begin{matrix} 0&\kern -1.5mm 1 & \kern -1.5mm 0\cr 0& \kern -1.5mm 0& \kern -1.5mm 1\cr -b_0&\kern -2mm -b_1&\kern -2mm -b_2\cr &&&\kern -1.5mm c\end{matrix}\right]$, $\kern -1mm \begin{array}{l}X^3\!+\!b_2X^2\!+\!b_1X\!+\!b_0\in\Bbb Z_p[X]\ \text{irr},\cr c\in\Bbb Z_p^\times\end{array}$}  & $p^3$ if $c=1$  \phantom{$\begin{matrix} \bigm | \cr \Bigm | \end{matrix}$} & $\frac 13(p^3-p)$  & \multirow{3}*{$\frac 13(p^3-p)(p-1)$} \\ \cline{3-4}
& & $p^4$ if $c\ne 1$ \phantom{$\begin{matrix} \bigm | \cr \Bigm | \end{matrix}$}  & $\frac 13(p^3-p)(p-2)$ & \\ \hline
\multirow{3}*{$\Bbb Z_{p^2}\times\Bbb Z_p^2$} &  \multirow{3}*{$\left[\begin{matrix} b\cr &0&1\cr &-c_0&-c_1\end{matrix}\right],\ b\in\Bbb Z_{p^2}^\times$, $X^2+c_1X+c_0\in\Bbb Z_p[X]\ \text{irr}$} & $p^2$ if $b=1$ \phantom{$\Bigm |$} & $\frac 12(p^2-p)$ & \multirow{3}*{$\frac 12p^2(p-1)^2$} \\ \cline{3-4}
& & $p^3$ if $b\ne 1,\ b\equiv 1\ (p)$ \phantom{$\Bigm |$} & $\frac 12(p^2-p)(p-1)$ & \\ \cline{3-4} 
& & $p^4$ if $b\not\equiv 1\ (p)$ \phantom{$\Bigm |$} & $\frac 12(p^2-p)p(p-2)$ & \\ \hline
\multirow{3}*{$\Bbb Z_p^4$} &  \multirow{3}*{$\left[\begin{matrix} 0&\kern-1.5mm 1\cr -b_0&\kern-2mm -b_1\cr && c\cr &&&\kern -1.5mm d\end{matrix}\right]$, $\begin{array}{l}X^2+b_1X+b_0\in\Bbb Z_p[X]\ \text{irr},\cr 0<c\le d<p\end{array}$} & $p^2$ if $c=d=1$ \phantom{$\Bigm |$} & $\frac12(p^2-p)$ & \multirow{3}*{$\frac14p^2(p-1)^2$} \\ \cline{3-4}
& & $p^3$ if $c=1<d$ \phantom{$\Bigm |$} & $\frac12(p^2-p)(p-2)$ & \\ \cline{3-4}
& & $p^4$ if $c>1$ \phantom{$\Bigm |$} & $\frac12(p^2-p)\textstyle\binom{p-1}2$ & \\ \hline
\multirow{3}*{$\Bbb Z_p^4$} &  \multirow{2}*{$\left[\begin{matrix} b& \kern -1mm 1\cr &\kern -1.5mm b\cr &&\kern -1mm 0&\kern -1mm 1\cr &&\kern -1.5mm -c_0&\kern -1.5mm -c_1\end{matrix}\right]$, $b\in\Bbb Z_p^\times,\
 X^2+c_1X+c_0\in\Bbb Z_p[X]\ \text{irr}$} &  $p^3$ if $b=1$ \phantom{$\begin{matrix} \bigm | \cr \Bigm | \end{matrix}$} & $\frac12(p^2-p)$ & \multirow{3}*{$\frac12p(p-1)^2$} \\ \cline{3-4}
& & $p^4$ if $b\ne 1$ \phantom{$\begin{matrix} \bigm | \cr \Bigm | \end{matrix}$} & $\frac12(p^2-p)(p-2)$ & \\ \hline
\end{tabular}
\]
\end{sidewaystable}

%%%%%%%%%%%%%%%%%%%%%%%%%%%%%%%%%%%%%%%%

\section*{Acknowledgment}

I thank Professor W. Edwin Clark for sharing his computational results on the numbers of nonisomorphic Alexander quandles.

%%%%%%%%%%%%%%%%%%%%%%%%%%%%%%%%%%%%%%%%
%          references 
%%%%%%%%%%%%%%%%%%%%%%%%%%%%%%%%%%%%%%%% 

\end{document}